\documentclass[reqno]{amsart}

\usepackage{external/takodachi}

\newcommand{\Gauss}{(\mathsf{G})}

\newcommand{\Bogomolnyi}{(\mathsf{B})}

\title
{
	Convergence of the self-dual abelian Higgs gradient flow
} 
\author{Jason Zhao}
\address{Department of Mathematics, University of California, Berkeley, 94720}
\email{zhao.j@berkeley.edu}
\date{\today}

\begin{document}

\begin{abstract}
	Given an initial data configuration $(A^{\mathrm{in}}, \phi^{\mathrm{in}})$ on $\mathbb R^2$ such that the self-dual abelian Higgs energy is near the minimum energy within its topological class, we prove that its evolution under the self-dual abelian Higgs gradient flow in temporal gauge converges exponentially as $t \to \infty$ with respect to the $(H^1 \times L^2)$-metric to a minimiser of the energy. Furthermore, we show that the convergence of the scalar field $\phi$ may be upgraded to the $H^1$-metric provided the additional assumption on the potential that $A^{\mathrm{in}} \in L^p (\mathbb R^2)$ for $2 < p < \infty$. As a corollary, we obtain a quantitative stability for the self-dual abelian Higgs energy which improves upon the previous result of Halavati \cite{Halavati2024} and partially resolves the open problem posed in his article. 
\end{abstract}
\maketitle

\tableofcontents

\section{Introduction}

The \textit{abelian Higgs energy}, also known as the Ginzburg-Landau energy, is a gauge-invariant functional on configurations $(A, \phi)$ consisting of a complex scalar field $\phi : \R^2 \to \C$ and a real-valued connection $1$-form $A = A_j \, dx^j$ on $\R^2$. Denoting $\bfD_\mu = \partial_\mu - i A_\mu$ the covariant derivative associated with the connection, and $F_{\mu\nu} = \partial_\mu A_\nu - \partial_\nu A_\mu$ for the corresponding curvature $2$-form, the energy is given by 
	\[
		\cE [A, \phi] 
			:= \tfrac12 \int_{\R^2} \Bigl( |\bfD_1 \phi|^2 + |\bfD_2 \phi|^2 + |F_{12}|^2 + \tfrac\lambda4 (1 - |\phi|^2)^2 \Bigr) \, dx,
	\]
where $\lambda > 0$ is a coupling constant. Its critical points, termed \textit{vortices}, are examples of topological defects; for a survey of this subject, see the monograph of Manton-Sutcliffe \cite{MantonSutcliffe2004}. For configurations with finite abelian Higgs energy, the \textit{topological degree},
	\[
		\deg[A, \phi] 
			:= \tfrac{1}{2\pi} \int_{\R^2} \epsilon_{jk} \partial^j \Bigl( A^k + \Im(\overline \phi \bfD^k \phi) \Bigr) \, dx,
	\]
is well-defined and integer-valued; see Section \ref{sec:topology}. In the \textit{self-dual} setting, where the coupling constant is taken as $\lambda = 1$, Bogomoln'yi \cite{Bogomolny1976} formally derived a lower bound on the energy of a configuration in terms of its topological degree, 
	\[
		\cE[A, \phi] 
			\geq \pi \bigl|\deg [A,\phi]\bigr|.
	\]
Jaffe-Taubes \cite[Chapter III]{JaffeTaubes1980} showed that this bound is saturated by the vortices, and, for each $N \in \Z$, constructed the moduli space $\cM^N$, the set of critical points for the self-dual abelian Higgs energy with topological degree $N$. It is of interest to study the stability of these vortices -- in this article, we consider the dynamic stability under the associated gradient flow. 

Given a time-dependent scalar-field $\phi : [0, T] \times \R^2 \to \C$ and a connection $1$-form $A = A_t \, dt + A_j \, dx^j$ on $[0, T] \times \R^2$, the \textit{self-dual abelian Higgs gradient flow}, also known as the {Gorkov-\'Eliashberg} equations or the {time-dependent Ginzburg-Landau} equations \cite{GorkovEliashberg1968}, is the system of equations
	\begin{equation}\label{eq:AHG}\tag{AHG}
		\begin{split}
			\bfD_t \phi - \bfD_1 \bfD_1\phi - \bfD_2 \bfD_2 \phi
				&= \tfrac12(1 - |\phi|^2) \phi, \\
			F_{t1}
				&= \Im(\overline \phi \bfD_1 \phi) - \partial_2 F_{12},\\
			F_{t2} 
				&= \Im(\overline \phi \bfD_2 \phi) + \partial_1 F_{12}.
		\end{split}
	\end{equation}
The first equation is a covariant non-linear heat equation for the scalar field, and the second and third equations are \textit{Ampere's law}, relating the magnetic flux to the charge current. This system is covariant in that it is invariant under the gauge transformation
	\[
		(A, \phi) 
			\mapsto (A + d\chi, e^{i\chi} \phi), 
	\]
for any real scalar field $\chi : [0, T] \times \R^2 \to \R$. It is also natural to formally impose the boundary conditions at spatial infinity,
    \begin{equation*}
    \begin{split}
        A 
            &\overset{|x| \to \infty}{\longrightarrow} 0,\\
        |\phi| 
            &\overset{|x| \to \infty}{\longrightarrow} 1 .\\
    \end{split}
    \end{equation*}
The latter is often known as a topological boundary condition, and is imposed by requiring finite integral of the $\phi^4$-potential $V(\phi) := \tfrac14 (1 - |\phi|^2)^2$. The former is a matter of technical convenience, as it rules out ``non-physical" representatives within each gauge-equivalence class; for this, we say $(A, \phi)$ is \textit{admissible} if $A \in L^p (\R^2)$ for some $2 < p < \infty$. In concert, we say $(A, \phi) \in H^1_\loc (\R^2)$ is a \textit{finite-energy configuration} if it is an admissible configuration with finite abelian Higgs energy.

For sufficiently regular configurations $(A, \phi)$ solving \eqref{AHG}, the following energy identity holds, 
	\[
		\frac{d}{dt} \cE[A, \phi] 
			= - \int_{\R^2} | \bfD_t \phi|^2 + |F_{t1}|^2 + |F_{t2}|^2 \, dx.
	\]
In fact, by fixing the \textit{temporal gauge},
	\begin{equation}\label{eq:temporal-gauge}\tag{T}
		A_t 
			= 0,
	\end{equation}
the system \eqref{AHG} arises as the gradient flow of the self-dual abelian Higgs energy. It is therefore natural, from \texttt{(i)} an evolutionary PDE perspective, and \texttt{(ii)} as a mechanism for regularising generic finite-energy configurations towards ground-states, to study the Cauchy problem for \eqref{AHG} under the temporal gauge \eqref{temporal-gauge} with finite-energy initial data,
	\[
		(A, \phi)_{|t = 0} = (A^{\mathrm{in}}, \phi^{\mathrm{in}}) , \qquad \cE[A^{\mathrm{in}}, \phi^{\mathrm{in}}] < \infty.
	\]
Our main result states that, for initial data which nearly saturates the Bogomoln'yi bound, the ensuing flow converges exponentially to the moduli space of vortices,

\begin{theorem}[Convergence of self-dual flow to $\cM^N$]\label{thm:main}
	Let $N \in \Z$ be an integer, and suppose $\epsilon_* \ll_{|N|} 1$ and $\gamma \ll_{|N|} 1$ are sufficiently small, and $C \gg_{|N|} 1$ is sufficiently large. Consider a finite-energy configuration $(A^{\mathrm{in}}, \phi^{\mathrm{in}})$ on $\R^2$ with topological degree $N$ and self-dual abelian Higgs energy satisfying
		\[
			\cE [A^{\mathrm{in}}, \phi^{\mathrm{in}}] - \pi |N| < \epsilon_*,
		\]
	and let $(A, \phi)$ be the solution to the self-dual abelian Higgs gradient flow \eqref{AHG} on $[0, \infty) \times \R^2$ in temporal gauge \eqref{temporal-gauge} with initial data $(A, \phi)_{|t = 0} = (A^{\mathrm{in}}, \phi^{\mathrm{in}})$. Then there exists a unique self-dual vortex $(A^\infty, \phi^\infty) \in \cM^N$ with topological degree $N$ such that 
		\begin{equation}\label{eq:main-converge}
			\bigl\| A(t) - A^\infty\bigr\|_{H^1}^2 + \bigl\| \phi(t) - \phi^\infty\bigr\|_{L^2}^2
				\leq C  e^{-\gamma t} \Bigl( \cE [A^{\mathrm{in}}, \phi^{\mathrm{in}}] - \pi |N|  \Bigr) \qquad \text{for all $t \geq 0$.}
		\end{equation}
	Furthermore, suppose that the magnetic potential satisfies $A^{\mathrm{in}} \in L^p (\R^2)$ for $2 < p < \infty$, then 
		\begin{equation}\label{eq:main-converge-2}
			\bigl\| \nabla \phi(t) - \nabla \phi^\infty\bigr\|_{L^2}^2
				\leq C \bigl\| A^{\mathrm{in}} \bigr\|_{L^p}^2 \,  e^{-\gamma t} \Bigl( \cE [A^{\mathrm{in}}, \phi^{\mathrm{in}}] - \pi |N|  \Bigr) \qquad \text{for all $t \geq 0$.}
		\end{equation}
\end{theorem}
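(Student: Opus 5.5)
The plan is a Łojasiewicz-type argument, with exponential rate, for the gradient flow. We write the energy in Bogomol'nyi form. For $N\ge 0$ (the case $N<0$ follows by complex conjugation) set $\bar\partial_A\phi := \tfrac12(\bfD_1\phi + i\bfD_2\phi)$. Then
\[
\cE[A,\phi] - \pi N = \int_{\R^2} 2|\bar\partial_A\phi|^2 + \tfrac12\bigl|F_{12} - \tfrac12(1-|\phi|^2)\bigr|^2\,dx =: \cB[A,\phi].
\]
The flow \eqref{AHG} in temporal gauge is the $L^2$ gradient flow of $\cB$, and the degree is preserved along it. The energy identity therefore reads $\tfrac{d}{dt}\cB = -\|\partial_t(A,\phi)\|_{L^2}^2 = -\|\nabla\cB\|_{L^2}^2$.

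The core estimate is a coercivity (spectral gap) inequality near $\cM^N$: there is $c_N>0$ with $\|\nabla\cB[A,\phi]\|_{L^2}^2 \ge c_N\,\cB[A,\phi]$ whenever $\cB[A,\phi]<\epsilon_*$. To prove it we proceed in three steps.
\begin{itemize}
\item First, we use a compactness/concentration argument: finite-energy configurations with small $\cB$ and degree $N$ are close, modulo gauge, to some vortex $(A^0,\phi^0)\in\cM^N$ in $H^1\times H^1$. This uses the Jaffe--Taubes description of $\cM^N$ by zero sets; translations are absorbed because $\cM^N$ is translation invariant.
\item Second, we linearise. $\cB = \tfrac12\|\mathcal D(A,\phi)\|^2$, where $\mathcal D$ is the first-order Bogomol'nyi operator. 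Its linearisation $L$ at a vortex, combined with the Coulomb-type gauge condition, is Fredholm with cokernel zero (Taubes). Hence $LL^*$ has a spectral gap $\ge c_N$, uniformly on the moduli space modulo translations, since the zeros remain in a compact configuration space after centring.
\item Third, since $\nabla\cB = (d\mathcal D)^*\mathcal D$, we get $\|\nabla\cB\|^2 \ge c\|\mathcal D\|^2 - O(\|\mathcal D\|^3)$ near $\cM^N$, which gives the inequality.
\end{itemize}
Gronwall then yields $\cB(t)\le e^{-c_N t}\cB(0)$.

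We next turn decay into convergence. Integrating the dissipation, $\int_t^\infty\|\partial_t(A,\phi)\|_{L^2}\,ds \le \int_t^\infty \|\nabla\cB\|\le C\cB(t)^{1/2}$ by Cauchy--Schwarz with dyadic time blocks. So $(A,\phi)(t)$ is Cauchy in $L^2$ with the rate $e^{-\gamma t}$. Note that we must also keep the solution inside the neighbourhood where coercivity holds; the energy is monotone, so $\cB<\epsilon_*$ for all time, and that suffices.

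To upgrade $A$ to $H^1$, use parabolic smoothing: the $A$-equation in temporal gauge is $\partial_t A = -d^*F + \Im(\bar\phi\bfD\phi)$. Since $d^*\partial_tA = -\partial_t\Im$-current, the divergence part is controlled by the gauge-covariant quantity. We estimate $\|\nabla(A(t)-A(s))\|_{L^2}$ through $\|F(t)-F(s)\|_{L^2}$ plus $\|d^*(A(t)-A(s))\|_{L^2}$. The curvature difference is controlled by $\cB$ and energy local smoothing on unit time intervals: $\|\partial_t A\|_{H^1}$ on $[t,t+1]$ is bounded by $\cB(t-1)^{1/2}$. The limit $(A^\infty,\phi^\infty)$ has $\cB=0$, so it lies in $\cM^N$. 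Uniqueness is immediate from convergence.

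For \eqref{eq:main-converge-2}, write $\nabla\phi = \bfD\phi + iA\phi$. The covariant part converges by the same smoothing. The term $iA\phi$ requires $A(t)$ bounded in $L^p$, which we get from $A(t) = A^{\mathrm{in}} + \int_0^t\partial_sA$, using Sobolev embedding $H^1\subset L^p$ for the integral. This accounts for the factor $\|A^{\mathrm{in}}\|_{L^p}$.

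The main obstacle is the uniform coercivity in the second bullet, in particular the lack of compactness as vortices separate. For $|N|\ge2$ the moduli space is noncompact: zeros may spread apart, and then the spectral gap of the linearisation might degenerate. I would first try to show that the gap is uniform by a localisation (IMS) argument. Near each cluster of zeros we compare with lower-degree vortices, and far away we use the mass gap from $|\phi|\approx1$ (Higgs mass $1$). Induction on $|N|$ then gives $c_N>0$.
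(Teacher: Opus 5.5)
Your overall architecture matches the paper's. You obtain exponential decay of $\cE-\pi N=\tfrac12\|\Bogomolnyi\|_{L^2}^2$ from a {\L}ojasiewicz-type inequality $\|\partial_t(A,\phi)\|_{L^2}^2\ge c_N(\cE-\pi N)$, then integrate $\partial_t(A,\phi)$ in time. That inequality is true, but your proof of it has a genuine gap.

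\emph{Step 1.} You assert qualitative $H^1\times H^1$ closeness, modulo gauge, of every degree-$N$ configuration with small excess to $\cM^N$, but give no argument. This is essentially a qualitative form of the stability result (Corollary~\ref{cor:stability}) that the theorem is meant to deliver, so it cannot serve as an input.

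\emph{Step 2.} This needs a spectral gap uniform over the non-compact $\cM^N$. You flag this yourself as the main obstacle and leave it at the level of a strategy.

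\emph{Step 3.} The error from replacing the linearisation at $(A,\phi)$ by that at a nearby vortex is controlled by the distance to $\cM^N$, not by $\|\Bogomolnyi\|$. Writing it as $O(\|\Bogomolnyi\|^3)$ therefore presupposes quantitative stability. Moreover, cubic terms are not controlled by $L^2$ norms alone in two dimensions.

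The missing idea is that no linearisation about $\cM^N$ is needed. At \emph{every} configuration one has the Weitzenb\"ock-type identity
\[
\|\partial_t(A,\phi)\|_{L^2}^2=\bigl\langle\Gauss,(-\Delta+|\phi|^2)\Gauss\bigr\rangle+\bigl\langle\psi,\bigl(-\bfD^j\bfD_j+\tfrac12(1+|\phi|^2)\bigr)\psi\bigr\rangle-3\int_{\R^2}\Gauss\,|\psi|^2\,dx,\qquad\psi:=2\bfD_{\overline z}\phi,
\]
with $\partial_t(A,\phi)$ the right-hand side of the flow in temporal gauge. This is what testing the closed massive system \eqref{Dbar-heat}--\eqref{Gauss-heat} against $\Bogomolnyi$ and using the energy identity produces.
\begin{itemize}
\item The $\psi$-mass is at least $\tfrac12$.
\item For the $\Gauss$-part, write $|\phi|^2=1-(1-|\phi|^2)$ and use $\int(1-|\phi|^2)|u|^2\lesssim\cE^{1/2}\|u\|_{L^2}\|\nabla u\|_{L^2}$. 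This gives $\|u\|_{H^1}^2\lesssim(1+\cE)\langle u,(-\Delta+|\phi|^2)u\rangle$ (Lemma~\ref{lemma:coercive}).
\item The cubic term is $\lesssim\|\Bogomolnyi\|_{L^2}^2\|\sfD\Bogomolnyi\|_{L^2}$. It is absorbed by this $H^1$-coercivity once $\|\Bogomolnyi\|_{L^2}$ is small.
\end{itemize}
This yields your inequality with $c_N\sim(1+N)^{-1}$ on the whole sublevel set. Compactness, Fredholm theory and separating vortices never enter.

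\textbf{Secondary points.}

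\emph{$L^2$-Cauchy property.} Your argument straight from the dissipation identity is correct, and cleaner than the paper's.

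\emph{$H^1$-upgrade for $A$ near $t=0$.} You integrate $\|\partial_tA\|_{H^1_x}$ using smoothing from time $t-1$, which is unavailable on $[0,1]$, and the argument cannot be repaired there. The term $\partial_tA$ contains $\nabla\Gauss$, and for energy-class data one only has $\|\nabla^2\Gauss(t)\|_{L^2}\lesssim t^{-1}\|\Bogomolnyi^{\mathrm{in}}\|_{L^2}$. This is not integrable at $0$; already for the linear heat flow, $\int_0^1\|\nabla^2e^{t\Delta}f\|_{L^2}\,dt$ can diverge for $f\in L^2$. Yet \eqref{main-converge} must hold at $t=0$. The paper never differentiates $\partial_tA$:
\begin{itemize}
\item the curl is handled via $F_{12}=\tfrac12(1-|\phi|^2)-\Gauss$ and $\partial_t|\phi|^2=2\Re(\overline\phi\bfD_t\phi)$;
\item the divergence is handled via $\partial_t\partial^\ell A_\ell=\Im(\overline\phi\bfD_t\phi)$;
\item $\phi$ is controlled by the unit-time $L^2_tL^\infty_x$ bound of Corollary~\ref{cor:Lp-Linfty}. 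This is needed because $\phi(t)$ need not be bounded at fixed times, a point your sketch omits.
\end{itemize}

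\emph{Second estimate.} The same objection applies to ``the covariant part converges by the same smoothing''. The paper instead uses $\|\nabla f\|_{L^2}=2\|\partial_{\overline z}f\|_{L^2}$ and $\partial_{\overline z}\phi=\bfD_{\overline z}\phi+iA_{\overline z}\phi$. Then only the decaying component $\bfD_{\overline z}\phi$ and the lower-order terms $A\,\partial_t\phi$, $\phi\,\partial_tA$ appear. The first of these is controlled by an $L^2_tL^\infty_x$ bound on $A$ obtained in Coulomb gauge at $t=\infty$.

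\emph{Degree of the limit.} $\cE-\pi N\to0$ does not by itself place the limit in $\cM^N$; you must also show its degree is $N$. The paper gets this from the finite $L^2$-length of the trajectory via Proposition~\ref{lem:vorticity-II}.
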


\begin{remark}
	The temporal gauge \eqref{temporal-gauge} fixes the gauge freedom up to time-independent gauge transformations $\chi \equiv \chi(x)$. This gauge is natural in that it renders the flow map $t \mapsto (A(t), \phi(t))$ formally $L^2$-orthogonal to the gauge orbits \cite[Section 2.8]{MantonSutcliffe2004}. It is not difficult to see that the bound \eqref{main-converge} is invariant with respect to this residual gauge freedom, while \eqref{main-converge-2} is not. Nonetheless, we can pass $H^1$-difference bounds for a scalar field $\phi$ to the gauge-transformed field $e^{i \chi} \phi$ by interpolation, provided that the transformation has regularity $\nabla \chi \in L^p (\R^2)$ for some $2 < p \leq \infty$, see Lemma \ref{lem:transform}. 
\end{remark}

\begin{remark}
	In Section \ref{sec:H1-phi}, where we prove \eqref{main-converge-2}, we will see that the $L^\infty_t L^p_x$-norm of the magnetic potential is $O_N (1)$ if the initial data satisfies the \textit{Coulomb gauge},
		\begin{equation}\label{eq:coulomb} \tag{C}
			\partial^\ell A_\ell = 0.
		\end{equation}
	It is here where we use the notion of admissibility, as it rules out representatives with non-trivial spatial asymptotics, e.g. $A = d \chi$ is gauge-equivalent to the trivial connection $A \equiv 0$, and it satisfies the Coulomb gauge whenever $\chi$ is harmonic. We only use $A \in L^p (\R^2)$ qualitatively, namely to ensure that the Biot-Savart formula holds, and it could probably be relaxed with more careful distribution theory. 
\end{remark}

\begin{remark}
	There are no non-minimising critical points of the self-dual abelian Higgs energy \cite[Theorem 1.1]{JaffeTaubes1980}, so, from the point of view of the energy identity, there does not seem to be any obstructions to the energy dissipating towards its minimum starting from generic data. It is conceivable then that one could extend Theorem \ref{thm:main} to arbitrary initial configurations after developing suitable compactness theory.
\end{remark}

As an immediate corollary, we obtain a quantitative stability for the self-dual energy, stating that if the energy of a configuration is near-minimising, then it must be close with respect to the $(H^1 \times L^2)$-metric to a vortex. This improves the result of \cite{Halavati2024}, and partially answers the problem posed in his article concerning stability with respect to the $(H^1 \times H^1)$-metric, 

\begin{corollary}[Stability of the self-dual energy]\label{cor:stability}
	Let $N \in \Z$ be an integer, and suppose $\epsilon_* \ll_{|N|} 1$ is sufficiently small and $C \gg_{|N|} 1$ is sufficiently large. Consider a finite-energy configuration $(A^{\mathrm{in}}, \phi^{\mathrm{in}})$ on $\R^2$ with topological degree $N$ and self-dual abelian Higgs energy satisfying
		\[
			\cE [A^{\mathrm{in}}, \phi^{\mathrm{in}}] - \pi |N| < \epsilon_*,
		\]
	Then  
		\begin{equation}\label{eq:main-stable}
			\inf_{(A^\infty, \phi^\infty) \in \cM^N} \bigl\| A^{\mathrm{in}} - A^\infty \bigr\|_{H^1}^2 + \bigl\| \phi^{\mathrm{in}} - \phi^\infty \bigr\|_{L^2}^2 \leq C \Bigl( \cE [A^{\mathrm{in}}, \phi^{\mathrm{in}}] - \pi |N| \Bigr). 
		\end{equation}
	Furthermore, if $A^{\mathrm{in}} \in L^p (\R^2)$ for $2 < p < \infty$, then 
		\begin{equation}\label{eq:main-stable-2}
			\inf_{(A^\infty, \phi^\infty) \in \cM^N} \bigl\| A^{\mathrm{in}} - A^\infty \bigr\|_{H^1}^2 + \bigl\| \phi^{\mathrm{in}} - \phi^\infty \bigr\|_{H^1}^2 \leq C \| A^{\mathrm{in}} \|_{L^p}^2 \Bigl( \cE [A^{\mathrm{in}}, \phi^{\mathrm{in}}] - \pi |N| \Bigr). 
		\end{equation}
\end{corollary}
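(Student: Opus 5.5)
Corollary \ref{cor:stability} follows from Theorem \ref{thm:main} by evaluating the convergence estimates at $t = 0$. We run the self-dual abelian Higgs gradient flow \eqref{AHG} in temporal gauge \eqref{temporal-gauge} with initial data $(A, \phi)_{|t=0} = (A^{\mathrm{in}}, \phi^{\mathrm{in}})$. The energy hypothesis $\cE[A^{\mathrm{in}}, \phi^{\mathrm{in}}] - \pi|N| < \epsilon_*$ is the same as in Theorem \ref{thm:main}, so we may take the constants $\epsilon_*$ and $C$ to be those furnished by the theorem. The theorem then gives a limiting vortex $(A^\infty, \phi^\infty) \in \cM^N$ such that \eqref{main-converge} holds for every $t \geq 0$. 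At $t = 0$ the factor $e^{-\gamma t}$ equals $1$, and \eqref{main-converge} becomes
	\[
		\bigl\| A^{\mathrm{in}} - A^\infty \bigr\|_{H^1}^2 + \bigl\| \phi^{\mathrm{in}} - \phi^\infty \bigr\|_{L^2}^2 \leq C \Bigl( \cE[A^{\mathrm{in}}, \phi^{\mathrm{in}}] - \pi|N| \Bigr).
	\]
Taking the infimum over $\cM^N$ on the left-hand side can only decrease it, so \eqref{main-stable} follows.

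For \eqref{main-stable-2}, assume in addition that $A^{\mathrm{in}} \in L^p(\R^2)$ for some $2 < p < \infty$. At $t = 0$ the bound \eqref{main-converge-2} gives
	\[
		\| \nabla \phi^{\mathrm{in}} - \nabla \phi^\infty \|_{L^2}^2 \leq C \| A^{\mathrm{in}} \|_{L^p}^2 \bigl( \cE[A^{\mathrm{in}}, \phi^{\mathrm{in}}] - \pi|N| \bigr),
	\]
with the same vortex $(A^\infty, \phi^\infty)$ as before. This is essential: the $H^1$ norm of $\phi^{\mathrm{in}} - \phi^\infty$ must be controlled relative to a single element of $\cM^N$, and the uniqueness statement in Theorem \ref{thm:main} ensures that the vortex appearing in \eqref{main-converge-2} is the one produced in \eqref{main-converge}. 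Adding this gradient bound to the $L^2$ bound from the previous paragraph controls $\|\phi^{\mathrm{in}} - \phi^\infty\|_{H^1}^2$.

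The one remaining issue is the mismatch in the constants. The $A$-part and the $L^2$-part of $\phi$ are bounded by $C$ times the energy excess, while the gradient part is bounded by $C\|A^{\mathrm{in}}\|_{L^p}^2$ times the energy excess. Their sum is therefore bounded by $C(1 + \|A^{\mathrm{in}}\|_{L^p}^2)$ times the excess. This matches the stated form of \eqref{main-stable-2} only after absorbing the $1$ into $\|A^{\mathrm{in}}\|_{L^p}^2$, which requires a lower bound on that norm. We obtain it from the topology. When $N \neq 0$, the degree is $\frac{1}{2\pi}\int F_{12}$ up to the boundary terms, so a configuration with near-minimal energy must carry magnetic flux comparable to $2\pi N$, and this should force $\|A^{\mathrm{in}}\|_{L^p} \gtrsim_{|N|} 1$. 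When $N = 0$, the moduli space $\cM^0$ consists of the trivial vacuum up to gauge, and either the estimate is immediate or we simply enlarge $C$. If this lower bound on $\|A^{\mathrm{in}}\|_{L^p}$ turns out to be delicate, the fallback is to state the bound with the factor $(1 + \|A^{\mathrm{in}}\|_{L^p}^2)$, which is harmless for the intended application. This bookkeeping is the only step that goes beyond directly quoting Theorem \ref{thm:main}; everything else is the evaluation at $t=0$.
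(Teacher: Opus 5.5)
Your main argument is the paper's: the corollary is stated there as an immediate consequence of Theorem \ref{thm:main}, obtained by setting $t=0$ in \eqref{main-converge} and \eqref{main-converge-2} for the limit vortex of the flow. \eqref{main-stable} follows exactly as you say.

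The constant bookkeeping for \eqref{main-stable-2} is where your proposal goes wrong, although you located the issue correctly. The lower bound on $\|A^{\mathrm{in}}\|_{L^p}$ is not available in general, and for $N=0$ no choice of $C$ helps. Take $A^{\mathrm{in}}=0$ and $\phi^{\mathrm{in}}=1+\eta$, with $\eta\in C^\infty_c$ real, small and not identically zero.
\begin{itemize}
\item This is an admissible finite-energy configuration of degree $0$ with small positive energy, and the right-hand side of \eqref{main-stable-2} vanishes.
\item Every element of $\cM^0$ solves the Bogomoln'yi equations, so it has energy $0$ and hence $|\phi^\infty|=1$ a.e. The left-hand side is therefore at least $\|\eta\|_{L^2}^2>0$.
\item The same example violates \eqref{main-converge-2} at $t=0$: the flow keeps $A\equiv 0$ and $\phi$ real.
\end{itemize}
So \eqref{main-stable-2} as literally stated is false. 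The correct statement is your ``fallback'' with the factor $1+\|A^{\mathrm{in}}\|_{L^p}^2$, and you should adopt it outright rather than as a contingency. This is also what the paper's proof of \eqref{main-converge-2} actually delivers: the gauge change to Coulomb gauge at $t=\infty$ costs a factor $1+\|\nabla\chi\|_{L^p}\lesssim_N 1+\|A^{\mathrm{in}}\|_{L^p}$ through Lemma \ref{lem:transform}.

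For $N\neq 0$, your flux heuristic does not justify the lower bound either. The flux is invariant under $A\mapsto \lambda A(\lambda\,\cdot)$, while $\|A\|_{L^p}$ scales like $\lambda^{1-2/p}$ with $p>2$; moreover $F_{12}^{\mathrm{in}}$ need not be integrable. A genuine lower bound would need unit-scale concentration of $F_{12}^{\mathrm{in}}$. One route is $L^2$-closeness to a vortex whose field equals $\tfrac12$ at its zeros, together with uniform regularity over $\cM^N$ and $L^p$-boundedness of the Leray projection. None of this is in the paper and none of it is needed once you state the bound with $1+\|A^{\mathrm{in}}\|_{L^p}^2$.
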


\begin{remark}
	Halavati \cite{Halavati2024} asked if there exists a sequence of finite-energy configurations $\{(A^{(n)}, \phi^{(n)})\}_n$ with topological degree $N$ such that 
		\[
			\lim_{n \to \infty} \cE[A^{(n)}, \phi^{(n)}] = \pi |N| \qquad \text{ and } \qquad \lim_{n \to \infty} \frac{\cE[A^{(n)}, \phi^{(n)}] - \pi |N|}{\inf_{(A^\infty, \phi^\infty) \in \cM^N} \| \nabla \phi^{(n)} - \nabla \phi^\infty \|_{L^2}^2} = 0.
		\]
	By \eqref{main-stable-2}, this cannot hold if $\{ A^{(n)}\}_n$ is uniformly bounded in $L^p (\R^2)$ for some $2 < p < \infty$. It seems to be of interest to see whether one can remove this restriction, though, as pointed out in  the remarks following Theorem \ref{thm:main}, this problem is quite sensitive to the choice of gauge. 
\end{remark}

\subsection{Previous results}

The long-time dynamics of the abelian Higgs gradient flow has a fairly long, albeit sparse, history. We summarise a few of the known results, and refer the reader to the relevant chapter of Manton-Sutcliffe \cite[Chapter 7.7.2]{MantonSutcliffe2004} for a fuller survey of the subject. 

The earliest rigorous work was done by Demoulini-Stuart \cite{DemouliniStuart1997} (see also the survey article of \cite{Stuart1996}) on the self-dual abelian Higgs gradient flow \eqref{AHG} on $\R^2$ in temporal gauge \eqref{temporal-gauge} for a class of smooth, localised initial data satisfying certain pointwise constraints. In this class, which contains configurations which are not necessarily near energy-minimising, they prove $H^2$-convergence of the flow. They exploit the self-duality via Pohozaev-type identities to propagate the localisation uniformly-in-time, which allows them to upgrade sequential weak-convergence of the flow allotted by the energy identity to sequential strong-convergence. They can then reduce to the low-energy regime, wherein they conclude their main result using the modulation theory developed by Stuart \cite{Stuart1994} for the hyperbolic abelian Higgs equation. 

For arbitrary coupling constant $\lambda > 0$, there exist unique finite-energy critical points $(A^{\lambda, N}, \phi^{\lambda, N})$ of the abelian Higgs energy which are $N$-equivariant, that is, taking the form,
	\[
		\phi^{\lambda, N} \equiv \phi^{\lambda, N} (r) e^{i N \theta} , \qquad A \equiv A_\theta^{\lambda, N} (r) \, d \theta,
	\]
for radial profiles $\phi^{\lambda, N}(r)$ and $A_\theta^{\lambda, N}(r)$. Gustafson \cite{Gustafson2002} considered the $H^1$-orbital stability of these vortices for the corresponding gradient flows (along with the hyperbolic abelian Higgs equation) in temporal gauge, proving stability in the cases when $\lambda < 1$ for all $N \in \Z$, and $\lambda \geq 1$ for the $1$-equivariant vortices $N = \pm 1$, and instability for $\lambda > 1$ and $|N| > 1$. The proof is based on modulation theory and stability of the linearised operator -- see \cite{GustafsonSigal2000} and the references therein. 

The ideas in this article, particularly those in Section \ref{sec:decay}, most closely resemble those in Demoulini \cite{Demoulini2013a} and Hassell \cite{Hassell1993}. The former showed $L^2$-convergence of the self-dual abelian Higgs flow on a closed Riemannian surface and in Coulomb gauge, i.e. $\partial^\ell A_\ell = 0$ for all $t \geq 0$, while the latter considered $C^k$-convergence for the Yang-Mills-Higgs gradient flow on $\R^3$ in temporal gauge. The common theme between these works and ours is in proving covariant decay estimates for the \textit{Bogomoln'yi tension field}. The self-duality manifests in the miraculous fact that these covariant quantities satisfy a system of massive parabolic equations with quadratic non-linearity, which is thereby amenable to perturbative arguments. 

As mentioned earlier, our stability result, Corollary \ref{cor:stability}, has precedent in the work of Halavati \cite{Halavati2024}, who showed that 
	\[
			\inf_{(A^\infty, \phi^\infty) \in \cM^N} \bigl\| F_{12}^{\mathrm{in}} - F_{12}^\infty \bigr\|_{L^2}^2 + \bigl\| \phi^{\mathrm{in}} - \phi^\infty \bigr\|_{L^2}^2 \leq C \Bigl( \cE [A^{\mathrm{in}}, \phi^{\mathrm{in}}] - \pi |N| \Bigr),
	\]
for near-minimising configurations. To compare, here we recover closeness of the full derivative of the magnetic potential rather than just the curl, and prove closeness of the derivative of the scalar field under suitable assumptions. In particular, \eqref{main-stable-2} partially resolves the open problem posed in the article. 

\subsection{Outline of the article}

By the reflection symmetry $(x^1, x^2) \mapsto (x^1, -x^2)$, we consider without loss of generality non-negative topological degree $N \in \N_0$. A suitable global well-posedness theory for \eqref{AHG} with finite-energy initial data is developed in Appendix \ref{sec:gwp}, so the concerned reader should banish any qualms regarding qualitative aspects of the initial data problem. 

After reviewing some notation and geometry in Section \ref{sec:prelim}, we begin our analysis in Section \ref{sec:topology} by defining the topological degree of a finite-energy configuration, and identifying a sufficient condition for which the degree is preserved along a $1$-parameter family of configurations. In Section \ref{sec:self-dual}, we reveal the self-dual structure of both the abelian Higgs energy and its gradient flow, and recall the definition of the moduli space of self-dual vortices $\cM^N$. The technical groundwork is contained in Sections \ref{sec:smooth}, where we prove smoothing estimates on unit time-scales, and Section \ref{sec:decay}, where we identify a gauge-covariant quantity known as the Bogomoln'yi tension field which decays exponentially. Recasting the flow in terms of the Bogomoln'yi tension field, we conclude Theorem \ref{thm:main} in Sections \ref{sec:H1-L2}-\ref{sec:H1-phi} by interpolating between the exponential decay estimates and unit-time scale bounds.

\subsubsection*{Acknowledgements}

The author would like to thank Sung-Jin Oh for encouraging discussions in the course of conducting this work, and for making useful suggestions on the preliminary manuscripts. The author also thanks Sky Cao for pointing out a few key typos in an earlier version of this article. The author was partially supported by the National Science Foundation CAREER Grant under \texttt{NSF-DMS-1945615}.

\section{Preliminaries}\label{sec:prelim}

\subsection{Notation}

\subsubsection*{Indices}

We employ the usual Einstein summation convention of summing repeated upper and lower indices. The Levi-Civita symbol $\epsilon_{jk}$ is the anti-symmetric tensor
    \[
        \epsilon_{jk} 
            := 
            \begin{cases}
                +1 & \text{if $(j, k) = (1, 2)$}, \\
                -1 & \text{if $(j, k) = (2, 1)$},\\
                0
                    & \text{otherwise.}
            \end{cases}
    \]

\subsubsection*{Derivatives}

We will use $\bfD^{(n)}$ and $\nabla^{(n)}$ for $n$-fold iterated spatial derivatives. More precisely, in equations, this will be a placeholder for any $n$-fold combination of derivatives, e.g.
    \[
        \bfD^{(n)} = \bfD_{j_1} \cdots \bfD_{j_n} 
            \qquad 
            \text{for some indices $j_1, \dots, j_n \in \{1, 2\}$}
    \] 
while the norms of these expressions will represent the sum total of all such combinations, e.g. 
    \[
        \bigl\| \bfD^{(n)} \phi \bigr\|_{L^p_x} 
            := \sum_{j_1, \dots, j_n \in \{1, 2\} } \bigl\| \bfD_{j_1} \cdots \bfD_{j_n} \phi \bigr\|_{L^p_x}.
    \]

\subsubsection*{Multi-linear forms}

We will use dots as catch-all notation for multi-linear expressions which are gauge-covariant or gauge-invariant, depending on the context. For example, 
    \[
        \Phi \cdot \Psi \cdot \Gamma 
            = a \overline \Phi \Psi \Gamma + b\Phi \overline\Psi \Gamma + c\Phi \Psi \overline\Gamma , \qquad \text{for some $a, b, c \in \C$}
    \]
The reader should simply keep in mind that these expressions behave well with respect to the product rule and its covariant analogues. 

\subsubsection*{Asymptotic notation}

The notations $X \lesssim Y$ denote the inequality $X \leq C Y$ for an implicit constant $C > 0$. Moreover, $X \sim Y$ is an abbreviation for $X \lesssim Y \lesssim X$. We will write $X \ll Y$ for $CX \leq Y$ and conversely $X \gg Y$ for $X \geq C Y$ for a sufficiently large constant $C > 0$. We indicate the dependence of implicit constants on parameters by subscripts, e.g. $X \lesssim_s Y$ denotes $X \lesssim C(s) Y$. 

\subsection{Covariant identities and inequalities}

It will be useful to record some standard algebraic identities and pointwise inequalities for covariant derivatives. Throughout, $A = A_\bfa \, dx^\bfa$ will be a connection $1$-form on a (subset of) Euclidean space, e.g. $\R^2$, $[0, T] \times \R^2$, and $\Phi$ will be a complex scalar field with the appropriate domain. 

\begin{lemma}[Curvature identities]
    Let $A = A_\bfa \, dx^\bfa$ be a connection $1$-form. We have the following identities, 
    \begin{enumerate}
        \item commutator identity,
            \begin{equation}\label{eq:commute}
                [\bfD_\bfa, \bfD_\bfb] 
                    = - i F_{\bfa \bfb},
            \end{equation}
        \item Bianchi identity,
            \begin{equation}\label{eq:bianchi}
                \partial_\bfa F_{\bfb \bfc} 
                    = \partial_\bfb F_{\bfa \bfc} - \partial_{\bfc} F_{\bfa \bfb}. 
        \end{equation}
    \end{enumerate}
    
\end{lemma}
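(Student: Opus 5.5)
Both identities follow by expanding the definitions $\bfD_\bfa = \partial_\bfa - i A_\bfa$ and $F_{\bfa\bfb} = \partial_\bfa A_\bfb - \partial_\bfb A_\bfa$ and applying them to a sufficiently regular complex scalar field $\Phi$. For a merely $H^1_\loc$ configuration, the same computations hold in the sense of distributions, or after a standard mollification argument. Nothing beyond the product rule and the symmetry of mixed partial derivatives will be needed.

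For the commutator identity \eqref{commute}, I will compute
	\[
		\bfD_\bfa \bfD_\bfb \Phi
			= \partial_\bfa \partial_\bfb \Phi - i (\partial_\bfa A_\bfb) \Phi - i A_\bfb \partial_\bfa \Phi - i A_\bfa \partial_\bfb \Phi - A_\bfa A_\bfb \Phi.
	\]
I then antisymmetrise in $(\bfa, \bfb)$. The second-order term cancels by the symmetry of mixed partials. The two cross terms $-iA_\bfb \partial_\bfa \Phi - i A_\bfa \partial_\bfb \Phi$ are symmetric, and so is $A_\bfa A_\bfb \Phi$, so these cancel too. What remains is $-i(\partial_\bfa A_\bfb - \partial_\bfb A_\bfa)\Phi = -i F_{\bfa\bfb}\Phi$, which is the claim.

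For the Bianchi identity \eqref{bianchi}, I will use that $F$ is antisymmetric, so $-\partial_\bfc F_{\bfa\bfb} = \partial_\bfc F_{\bfb\bfa}$. The claim is then equivalent to the cyclic sum
	\[
		\partial_\bfa F_{\bfb\bfc} + \partial_\bfb F_{\bfc\bfa} + \partial_\bfc F_{\bfa\bfb} = 0.
	\]
I will verify this by expanding each term into second derivatives of $A$: the six resulting terms cancel pairwise by the symmetry of mixed partials. Equivalently, this is the statement $dF = d^2 A = 0$.

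Neither step presents a genuine obstacle. The only point needing care is the bookkeeping of signs and index orderings when $\bfa, \bfb, \bfc$ range over the spacetime indices $t, 1, 2$. In the low-regularity setting, one should also note that all cancellations are linear in the second derivatives of $A$, so they remain valid distributionally.
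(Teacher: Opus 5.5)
Your proof is correct and matches the paper's argument: the commutator identity comes from expanding with the product rule and cancelling via the symmetry of mixed partials. For the Bianchi identity, the paper adds the zero term $\partial_\bfc \partial_\bfb A_\bfa - \partial_\bfb \partial_\bfc A_\bfa$ and regroups rather than passing to the cyclic sum, but this is the same cancellation of second derivatives of $A$.
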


\begin{proof}
    \leavevmode
    \begin{enumerate}
        \item Fix an arbitrary complex scalar field $\Phi$, then applying the product rule and commuting mixed partial derivatives, 
            \begin{align*}
                [\bfD_\bfa, \bfD_\bfb] \Phi
                    &= (\partial_\bfa - i A_\bfa) (\partial_\bfb - i A_\bfb)\Phi - (\partial_\bfb - i A_\bfb)(\partial_\bfa - i A_\bfa)\Phi \\
                    &= - i \left(\partial_\bfa (A_\bfb \Phi)  + A_\bfa \partial_\bfb \Phi - \partial_\bfb(A_\bfa \Phi) - A_\bfb \partial_\bfa \Phi \right) \\
                    &= - i (\partial_\bfa A_\bfb - \partial_\bfb A_\bfa) \Phi = - i F_{\bfa \bfb} \Phi.
            \end{align*}
        \item Commuting mixed partial derivatives, 
            \begin{align*}
            \partial_\bfa F_{\bfb \bfc} 
                &= \partial_\bfa (\partial_\bfb A_\bfc - \partial_\bfc A_\bfb) + (\partial_\bfc \partial_\bfb A_\bfa - \partial_\bfb \partial_\bfc A_\bfa)\\
                &= \partial_\bfb (\partial_\bfa A_\bfc - \partial_\bfc A_\bfa) - \partial_\bfc (\partial_\bfa A_\bfb - \partial_\bfb A_\bfa) = \partial_\bfb F_{\bfa \bfc} - \partial_{\bfc} F_{\bfa \bfb}.
        \end{align*}
    \end{enumerate}
\end{proof}

\begin{lemma}[Covariant Laplacian commutator identity]
    Let $A = A_\bfa \, dx^\bfa$ be a connection $1$-form, then 
        \begin{equation}\label{eq:laplace-commute}
            [\bfD_\bfa, \bfD^j \bfD_j]
                = -2i F\indices{_\bfa^j} \bfD_j - i \partial^j F_{\bfa j} 
        \end{equation}
\end{lemma}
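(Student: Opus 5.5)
We prove the identity by applying both sides to an arbitrary complex scalar field $\Phi$ and expanding the commutator with the Leibniz rule for commutators, $[X, YZ] = [X,Y]Z + Y[X,Z]$. The only input is the commutator identity \eqref{commute}. Throughout we use that $F$ is a real-valued scalar coefficient, so covariant derivatives act on products with it via $\bfD_j (F \Phi) = (\partial_j F)\Phi + F \bfD_j \Phi$, and that spatial indices are raised with the Euclidean metric, so $\bfD^j$ and $\bfD_j$ coincide up to index placement.

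The computation runs as follows. Writing
	\[
		[\bfD_\bfa, \bfD^j \bfD_j] \Phi = [\bfD_\bfa, \bfD^j] \bfD_j \Phi + \bfD^j [\bfD_\bfa, \bfD_j] \Phi,
	\]
we substitute \eqref{commute} into each term.
\begin{enumerate}
	\item The first term becomes $-i F\indices{_\bfa^j} \bfD_j \Phi$.
	\item The second term becomes $\bfD^j ( -i F_{\bfa j} \Phi ) = -i (\partial^j F_{\bfa j}) \Phi - i F\indices{_\bfa^j} \bfD_j \Phi$, where the product rule splits off the derivative falling on the curvature. This derivative is an ordinary partial derivative, since $F$ carries no charge.
\end{enumerate}
Summing the two contributions gives $-2i F\indices{_\bfa^j} \bfD_j \Phi - i \partial^j F_{\bfa j} \Phi$. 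Since $\Phi$ was arbitrary, this is \eqref{laplace-commute}.

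There is no real obstacle here. The only point needing care is the bookkeeping of index positions and signs: $[\bfD_\bfa, \bfD^j] = -i F\indices{_\bfa^j}$ follows from \eqref{commute} by raising the index. We also check that the derivative hitting $F_{\bfa j}$ appears with the index order $\partial^j F_{\bfa j}$, not $\partial^j F_{j \bfa}$, which would flip the sign.

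When $\bfa$ is the temporal index, the summation still runs only over spatial $j$, so the same argument applies verbatim on $[0,T]\times\R^2$.
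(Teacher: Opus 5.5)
Your proof is correct and is essentially the paper's argument. The paper also commutes $\bfD_\bfa$ first past $\bfD^j$, which gives $-iF_{\bfa}{}^{j}\bfD_j\Phi$, and then past $\bfD_j$, which gives $-i\bfD^j(F_{\bfa j}\Phi)$, and expands the latter by the product rule; writing this as $[X,YZ]=[X,Y]Z+Y[X,Z]$ is the same two-step bookkeeping in more compact form.
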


\begin{proof}
   Fix an arbitrary complex scalar field $\Phi$, then commuting covariant derivatives a l\'a \eqref{commute} gives 
        \begin{align*}
            \bfD_\bfa \bfD^j \bfD_j \Phi
                &= \bfD^j \bfD_\bfa \bfD_j \Phi - i F\indices{_\bfa^j} \bfD_j \Phi \\
                &= \bfD^j \bfD_j \bfD_\bfa \Phi - i \bfD^j \big( F\indices{_\bfa_j} \Phi\big)- i F\indices{_\bfa^j} \bfD_j \Phi \\
                &= \bfD^j \bfD_j \bfD_\bfa \Phi - 2i F\indices{_\bfa^j} \bfD_j \Phi - i \partial^j F_{\bfa j} \Phi,
        \end{align*}
    which, upon rearranging terms, proves the identity. 
\end{proof}

\begin{lemma}[Covariant product rule]
    Let $A = A_\bfa \, dx^\bfa$ be a connection $1$-form, and let $\Phi, \Psi$ be complex scalar fields. Then we have the product rule,
        \begin{align}
            \partial_\bfa (\overline \Phi \Psi) 
                &= \overline{\bfD_\bfa \Phi} \Psi + \overline \Phi \bfD_\bfa \Psi. \label{eq:product}
        \end{align}
\end{lemma}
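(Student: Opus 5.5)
The plan is a direct computation: expand the covariant derivatives through their definition $\bfD_\bfa = \partial_\bfa - i A_\bfa$ and check that the connection terms cancel because $A_\bfa$ is real. No earlier identity is needed beyond the definition. The only thing to track is that complex conjugation flips the sign of the gauge term, so that $\overline{\bfD_\bfa \Phi} = \partial_\bfa \overline\Phi + i A_\bfa \overline\Phi$.

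First I apply the ordinary Leibniz rule to the product of the two complex-valued functions $\overline\Phi$ and $\Psi$:
\[
\partial_\bfa(\overline\Phi \Psi) = (\partial_\bfa \overline\Phi)\Psi + \overline\Phi\, \partial_\bfa \Psi .
\]
Next I rewrite each partial derivative in covariant form. Since $A_\bfa$ is real, $\partial_\bfa \overline\Phi = \overline{\partial_\bfa \Phi} = \overline{\bfD_\bfa\Phi} - i A_\bfa \overline\Phi$. For the other factor, $\partial_\bfa \Psi = \bfD_\bfa \Psi + i A_\bfa \Psi$. Substituting both gives
\[
\partial_\bfa(\overline\Phi\Psi) = \overline{\bfD_\bfa\Phi}\,\Psi + \overline\Phi\,\bfD_\bfa\Psi - iA_\bfa\overline\Phi\Psi + iA_\bfa\overline\Phi\Psi .
\]
The last two terms cancel, which gives \eqref{product}.

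There is no real obstacle here. The only point needing care is the realness of the connection, which the paper assumes throughout (a real-valued $1$-form). This realness is exactly what makes the gauge terms appear with opposite signs and cancel. For the required regularity, it suffices to take $\Phi,\Psi$ and $A$ smooth, or in $H^1_\loc$ with products interpreted distributionally, by density.
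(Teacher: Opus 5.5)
Your proposal is correct and follows essentially the same route as the paper: both apply the ordinary Leibniz rule and observe that the gauge terms $iA_\bfa\overline\Phi\Psi$ and $-iA_\bfa\overline\Phi\Psi$ cancel because $A_\bfa$ is real. The paper inserts these two cancelling terms as an added zero, whereas you substitute $\partial_\bfa = \bfD_\bfa + iA_\bfa$ directly, which is the same computation.
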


\begin{proof}
    By the usual product rule and remarking $\overline{(-i A_\bfa \Phi)} \Psi + \overline \Phi (-i A_\bfa \Psi) = 0$, we compute
        \begin{align*}
            \partial_\bfa (\overline \Phi \Psi) 
                &= \overline{\partial_\bfa \Phi} \Psi + \overline \Phi \partial_\bfa \Psi \\
                &=  \overline{\partial_\bfa \Phi} \Psi + \overline \Phi \partial_\bfa \Psi + \left( \overline{(-i A_\bfa \Phi)} \Psi + \overline \Phi (-i A_\bfa \Psi) \right) = \overline{\bfD_\bfa \Phi} \Psi + \overline \Phi \bfD_\bfa \Psi,
        \end{align*}
    which gives the result. 
\end{proof}

We state a standard pointwise inequality relating usual derivatives of the modulus $|\Phi|$ of a complex scalar field to covariant derivatives of the scalar field $\Phi$ itself. This will allow us to convert standard Sobolev inequalities into analogues for covariant derivatives.

\begin{lemma}[Diamagnetic inequality]
    Let $A = A_\bfa \, dx^\bfa$ be a connection $1$-form, and let $\Phi$ be a complex scalar field. Then the following inequality holds in distribution, 
        \begin{equation}
            |\partial_\bfa |\Phi|| \leq |\bfD_\bfa \Phi|.\label{eq:diamagnetic} 
        \end{equation}
\end{lemma}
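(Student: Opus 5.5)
The plan is to treat the smooth case first, by polar decomposition away from the zero set of $\Phi$, and then recover the distributional statement by approximation. Since $A_\bfa$ is real, the identity \eqref{product} with $\Psi = \Phi$ gives
	\[
		\partial_\bfa |\Phi|^2 = \overline{\bfD_\bfa \Phi}\, \Phi + \overline{\Phi}\, \bfD_\bfa \Phi = 2 \Re\bigl(\overline \Phi \bfD_\bfa \Phi\bigr).
	\]
On the open set $\{\Phi \neq 0\}$ the modulus $|\Phi|$ is smooth and $\partial_\bfa |\Phi| = \Re(\overline \Phi \bfD_\bfa \Phi)/|\Phi|$. Cauchy--Schwarz then bounds this by $|\bfD_\bfa \Phi|$ pointwise.

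To avoid the zero set, I would regularise by setting $|\Phi|_\varepsilon := (|\Phi|^2 + \varepsilon^2)^{1/2}$. This is smooth wherever $\Phi$ is, and the same computation gives
	\[
		|\partial_\bfa |\Phi|_\varepsilon| = \frac{|\Re(\overline \Phi \bfD_\bfa \Phi)|}{|\Phi|_\varepsilon} \leq \frac{|\Phi|}{|\Phi|_\varepsilon}|\bfD_\bfa \Phi| \leq |\bfD_\bfa \Phi|
	\]
everywhere. As $\varepsilon \to 0$ we have $|\Phi|_\varepsilon \to |\Phi|$ locally uniformly. Consequently $\partial_\bfa |\Phi|_\varepsilon \to \partial_\bfa |\Phi|$ in distributions.

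The derivatives $\partial_\bfa |\Phi|_\varepsilon$ are bounded by $|\bfD_\bfa\Phi| \in L^1_{\mathrm{loc}}$. Pairing against a nonnegative test function $\varphi$ and passing to the limit, using weak-$*$ compactness in $L^\infty$ of the quotients $\partial_\bfa|\Phi|_\varepsilon / |\bfD_\bfa \Phi|$ where the denominator is nonzero, shows that $\partial_\bfa|\Phi|$ is a function with $|\partial_\bfa |\Phi|| \leq |\bfD_\bfa \Phi|$ a.e.

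For merely $H^1_{\mathrm{loc}}$ (or $W^{1,1}_{\mathrm{loc}}$) fields $\Phi$ with $A \in L^2_{\mathrm{loc}}$ or $L^p_{\mathrm{loc}}$, I would instead use the chain rule for Sobolev functions. The map $z \mapsto (|z|^2+\varepsilon^2)^{1/2}$ is smooth and Lipschitz on $\C$, so the computation above holds a.e. with weak derivatives, and $\bfD_\bfa \Phi = \partial_\bfa \Phi - iA_\bfa\Phi \in L^1_{\mathrm{loc}}$. The only point needing care is this justification of the chain rule at low regularity. It follows from the standard fact that Lipschitz functions of $W^{1,1}_{\mathrm{loc}}$ maps have weak derivatives given by the chain rule whenever the outer function is $C^1$.
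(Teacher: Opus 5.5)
Your proposal is correct and follows the same route as the paper. It applies the covariant product rule and Cauchy--Schwarz to the regularised modulus $(|\Phi|^2+\varepsilon^2)^{1/2}$ and passes to $\varepsilon \to 0$ against test functions; your remarks on the Sobolev chain rule and on identifying $\partial_\bfa|\Phi|$ as an $L^1_{\mathrm{loc}}$ function bounded a.e.\ by $|\bfD_\bfa\Phi|$ just fill in details the paper leaves implicit.
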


\begin{proof}
    We give a formal calculation; writing $|\Phi|^2 = \overline \Phi \Phi$, we compute using the product rule \eqref{product} and Cauchy-Schwarz, 
        \begin{align*}
            |\partial_\bfa |\Phi||
                &= \frac{1}{\sqrt{\overline \Phi \Phi}} |\Re(\overline \Phi \bfD_\bfa \Phi)|\\
                &\leq |\bfD_\bfa \Phi|, 
        \end{align*}
    as desired. This argument may be made rigorous by regularising, replacing $|\Phi|$ with $\sqrt{|\Phi|^2 + \epsilon}$ and testing against non-negative test functions, upon which the result follows by taking $\epsilon \to 0$.
\end{proof}

\subsection{Magnetic Sobolev estimates}

To prove covariant energy estimates, we will need some analogues of Sobolev inequalities for magnetic derivatives $\bfD = \nabla - i A$,

\begin{lemma}[Magnetic Gagliardo-Nirenberg inequalities]
    Let $A = A_j \, dx^j$ be a connection $1$-form on $\R^2$. Then the following interpolation inequalities hold:
    \begin{enumerate}
        \item Ladyzhenskaya-type inequality; 
            \begin{equation}
                 \| \bfD \Phi \|_{L^4_x} 
                    \lesssim \|\Phi\|_{L^\infty_x}^{\frac12} \ \|\bfD^{(2)}\Phi\|_{L^2_x}^{\frac12},\label{eq:GN4} 
            \end{equation}

        \item Agmon-type inequality; 
            \begin{equation}
            \|\Phi\|_{L^\infty_x} 
                \lesssim \| \Phi \|_{L^2_x}^{\frac12} \ \bigl\| \bfD^{(2)} \Phi\bigr\|_{L^2_x}^{\frac12}. \label{eq:agmon}
            \end{equation}

        \item Gagliardo-Nirenberg-Sobolev inequality; for $2 \leq p < \infty$ and $\theta = \tfrac2p$, 
            \begin{equation}
                 \|\Phi \|_{L^p_x} 
                    \lesssim \|\Phi\|_{L^2_x}^{\theta} \ \|\bfD\Phi\|_{L^2_x}^{1 - \theta},\label{eq:GN} 
            \end{equation}

    \end{enumerate}
\end{lemma}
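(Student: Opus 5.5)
The plan is to reduce all three inequalities to their classical Euclidean counterparts, applied to the modulus $|\Phi|$ (for \eqref{GN}) or to the modulus of the covariant derivative $|\bfD\Phi|$ (for \eqref{GN4} and \eqref{agmon}). The tool for this is the diamagnetic inequality \eqref{diamagnetic}. Because every resulting estimate involves only the gauge-invariant quantities $|\Phi|$, $|\bfD\Phi|$, $|\bfD^{(2)}\Phi|$, the implicit constants do not depend on $A$.

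\textbf{Step 1: the estimate \eqref{GN}.} Let $f = |\Phi|$. By \eqref{diamagnetic}, $|\nabla f| \leq |\bfD\Phi|$ holds in distribution, so $f \in H^1$ with $\|\nabla f\|_{L^2} \leq \|\bfD\Phi\|_{L^2}$. The classical two-dimensional Gagliardo–Nirenberg inequality
\[
\|f\|_{L^p} \lesssim \|f\|_{L^2}^{2/p}\|\nabla f\|_{L^2}^{1-2/p}
\]
then gives \eqref{GN} immediately.

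\textbf{Step 2: the estimate \eqref{GN4}.} Apply the diamagnetic inequality to each component $\bfD_j\Phi$, which gives $|\nabla|\bfD_j\Phi|| \leq |\bfD\bfD_j\Phi|$. The Euclidean proof of the Ladyzhenskaya-type bound $\|\nabla u\|_{L^4}^2 \lesssim \|u\|_{L^\infty}\|\nabla^2 u\|_{L^2}$ integrates by parts. Its covariant version reads
\[
\int |\bfD\Phi|^4 = \int |\bfD\Phi|^2\, \overline{\bfD_j\Phi}\,\bfD_j\Phi ,
\]
and we move one $\bfD_j$ off $\Phi$ using the product rule \eqref{product}: $\overline{\bfD_j\Phi}\,\bfD_j\Phi\cdot|\bfD\Phi|^2 = \partial_j(\overline{\Phi}\,\bfD_j\Phi)|\bfD\Phi|^2 - \overline\Phi\,\bfD_j\bfD_j\Phi\,|\bfD\Phi|^2$, taking real parts throughout. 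Integrating the first term by parts throws $\partial_j$ onto $|\bfD\Phi|^2$, and $|\partial_j |\bfD\Phi|^2| \lesssim |\bfD\Phi|\,|\bfD^{(2)}\Phi|$ by the product rule. Both terms are then bounded by
\[
\|\Phi\|_{L^\infty}\int |\bfD\Phi|^2|\bfD^{(2)}\Phi| \leq \|\Phi\|_{L^\infty}\|\bfD\Phi\|_{L^4}^2\|\bfD^{(2)}\Phi\|_{L^2}.
\]
Dividing by $\|\bfD\Phi\|_{L^4}^2$ gives \eqref{GN4}. This division requires first justifying finiteness of the $L^4$ norm, which we do by approximation (smooth compactly supported $\Phi$, then density).

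\textbf{Step 3: the estimate \eqref{agmon}.} This is the main obstacle: the right-hand side contains no first-order term, and the diamagnetic inequality controls only first derivatives of $|\Phi|$. A second-order quantity such as $\Delta|\Phi|$ is not controlled by $|\bfD^{(2)}\Phi|$ alone. The approach I would try first is a frequency-free argument via Step 1 applied twice. By \eqref{GN} with $p=4$,
\[
\|\Phi\|_{L^\infty} \lesssim \|\,|\Phi|\,\|_{L^4}^{1/2}\,\|\nabla|\Phi|^2\|_{L^2}^{1/2}\cdot(\dots),
\]
or more cleanly, use the two-dimensional inequality $\|g\|_{L^\infty}\lesssim \|g\|_{L^2}^{1/2}\|\nabla^2 g\|_{L^2}^{1/2}$ after reducing to first-order quantities. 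I would proceed as follows.
\begin{enumerate}
\item Apply the Sobolev embedding $W^{1,q}\hookrightarrow L^\infty$ for $q>2$ in its scaling-invariant Gagliardo–Nirenberg form to $|\Phi|$. This gives, say, $\|\Phi\|_{L^\infty}\lesssim \|\Phi\|_{L^4}^{1/3}\|\bfD\Phi\|_{L^4}^{2/3}$ ($q=4$).
\item Bound $\|\Phi\|_{L^4}$ via \eqref{GN}.
\item Bound $\|\bfD\Phi\|_{L^4}$ via \eqref{GN} applied to $\bfD_j\Phi$, which uses $\|\bfD\Phi\|_{L^2}$ and $\|\bfD^{(2)}\Phi\|_{L^2}$.
\item Remove the intermediate $\|\bfD\Phi\|_{L^2}$ with the covariant interpolation $\|\bfD\Phi\|_{L^2}^2 = -\Re\int\overline\Phi\,\bfD^j\bfD_j\Phi \leq \|\Phi\|_{L^2}\|\bfD^{(2)}\Phi\|_{L^2}$, which follows from \eqref{product} and integration by parts.
\end{enumerate}
The exponents in this chain are fixed by scaling, so they must combine to $\tfrac12,\tfrac12$.
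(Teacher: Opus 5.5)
Your Steps 1 and 2 follow the paper's arguments. \eqref{GN} comes from the Euclidean inequality plus the diamagnetic inequality. \eqref{GN4} comes from one covariant integration by parts via \eqref{product} plus H\"older. The paper works one index $j$ at a time, so only $\bfD_j\bfD_j\Phi$ appears; your use of $|\bfD\Phi|^2$ produces mixed second derivatives, which $\bfD^{(2)}\Phi$ covers anyway.

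For the Agmon-type bound \eqref{agmon} you take a genuinely different route. The paper applies the Euclidean inequality $\|w\|_{L^\infty}\lesssim\|w\|_{L^1}^{1/3}\|\nabla^{(2)}w\|_{L^2}^{2/3}$ to $w=|\Phi|^2$ rather than to $|\Phi|$; this is how it sidesteps your worry about $\Delta|\Phi|$. It uses $|\nabla^{(2)}|\Phi|^2|\lesssim|\bfD\Phi|^2+|\Phi|\,|\bfD^{(2)}\Phi|$, applies \eqref{GN4} to the first term, and then absorbs $\|\Phi\|_{L^\infty}^{2/3}$ into the left-hand side.

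Your first-order chain also works, but its first link is misstated. Scaling ($A=0$, $\Phi(\lambda\,\cdot)$) forces $\|\Phi\|_{L^\infty}\lesssim\|\Phi\|_{L^4}^{1/2}\|\bfD\Phi\|_{L^4}^{1/2}$; the $\tfrac13,\tfrac23$ version you wrote is false. With $\tfrac12,\tfrac12$, your items 2 and 3 give $\|\Phi\|_{L^\infty}\lesssim\|\Phi\|_{L^2}^{1/4}\|\bfD\Phi\|_{L^2}^{1/2}\|\bfD^{(2)}\Phi\|_{L^2}^{1/4}$. Item 4 then turns this into exactly \eqref{agmon}. The earlier display containing ``$(\dots)$'' should simply be dropped.

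Each route buys something different:
\begin{itemize}
\item Yours needs neither \eqref{GN4} nor the absorption step, which tacitly requires $\|\Phi\|_{L^\infty}<\infty$ a priori. It uses only the diamagnetic inequality applied to $\Phi$ and to each $\bfD_j\Phi$, a Morrey-type Gagliardo--Nirenberg bound, and one integration by parts. You must justify discarding that boundary term, e.g.\ by approximation.
\item The paper's $|\Phi|^2$ device is reused verbatim with $w=1-|\Phi|^2$ to prove \eqref{inefficientLinfty}. There $\Phi\notin L^2$, so a chain built on $\|\Phi\|_{L^2}$ and $\|\Phi\|_{L^4}$ has nothing to start from.
\end{itemize}
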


\begin{proof}
\leavevmode
    \begin{enumerate}
        \item Integrating-by-parts and applying Cauchy-Schwartz yields 
            \begin{align*}
                \|\bfD_j \Phi\|_{L^4_x}^4
                    &\approx \int_{\R^2} \bfD_j \Phi \cdot \bfD_j \Phi \cdot \bfD_j \Phi \cdot \bfD_j \Phi \, dx \\
                    &\approx \int_{\R^2} \Phi \cdot \bfD_j \Phi \cdot \bfD_j \Phi \cdot \bfD_j \bfD_j \Phi \, dx \lesssim \|\Phi\|_{L^\infty_x} \ \|\bfD_j \Phi\|_{L^4_x}^2 \ \|\bfD_j \bfD_j \Phi \|_{L^2_x}.
            \end{align*}
        Another application of Cauchy-Schwartz and rearranging gives
            \[
                \sum_{j = 1, 2}\| \bfD_j \Phi \|_{L^4_x}^4 
                    \lesssim \| \Phi\|_{L^\infty_x}^2 \, \sum_{j = 1, 2}\| \bfD_j \bfD_j \Phi \|_{L^2_x}^2. 
            \]

        \item Consider the following instance of the usual Gagliardo-Nirenberg inequality,
            \[
                \| w \|_{L^\infty_x} \lesssim \|w\|_{L^1_x}^{\frac13} \|\nabla^{(2)} w\|_{L^2_x}^{\frac23}.
            \]
        Applying this to the gauge-invariant quantity $|\Phi|^2$ yields
        \begin{align*}
            \|\Phi\|_{L^\infty_x}^2 
                &\lesssim \| |\Phi|^2 \|_{L^1_x}^{\frac13} \ \| \nabla^{(2)} |\Phi|^2 \|_{L^2_x}^{\frac23} \\
                &\lesssim \|\Phi\|_{L^2_x}^{\frac23} \left( \| \bfD \Phi\|_{L^4_x}^2 + \|\Phi\|_{L^\infty_x} \|\bfD^{(2)} \Phi\|_{L^2_x} \right)^{\frac23} \\
                &\lesssim \|\Phi\|_{L^\infty_x}^{\frac23} \ \|\Phi\|_{L^2_x}^{\frac23} \ \| \bfD^{(2)} \Phi\|_{L^2_x}^{\frac23},
        \end{align*}
    using the covariant product rule \eqref{product} in the second line, and the magnetic interpolation inequality \eqref{GN4} in the third line. Rearranging and taking appropriate roots gives the result. 
    
        \item This follows from the non-magnetic counterpart and the diamagnetic inequality \eqref{diamagnetic}. 
    \end{enumerate}
\end{proof}

We will also need an $L^\infty_x$-bound for scalar fields obeying the topological boundary condition $|\Phi| \to 1$ as $|x| \to \infty$. Given $\bfD \Phi \in L^2 (\R^2)$, it follows from the diamagnetic inequality \eqref{diamagnetic} that $|\Phi| \in \dot H^1 (\R^2)$; this is unfortunately not enough to deduce boundedness as the Sobolev inequality ${\dot H}^1 (\R^2)\not\hookrightarrow L^\infty (\R^2)$ fails by a logarithm. On the other hand, we only need mild additional assumptions at top-order and bottom-order, e.g.

\begin{lemma}[Sobolev inequality]
    Let $A \equiv A_j \, dx^j$ be a connection $1$-form on $\R^2$, then the following holds for any $\Phi \in L^\infty (\R^2)$ with $\bfD^{(2)} \Phi \in L^2 (\R^2)$ and charge density obeying $1 - |\Phi|^2 \in L^2 (\R^2)$,
        \begin{equation}\label{eq:inefficientLinfty}
            \|\Phi\|_{L^\infty_x} 
                \lesssim 1 + \big\|1 - |\Phi|^2\big\|_{L^2_x}^{\frac13} \ \big\|\bfD^{(2)} \Phi\big\|_{L^2_x}^{\frac13}.
        \end{equation}
\end{lemma}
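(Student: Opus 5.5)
We imitate the proof of the Agmon-type inequality \eqref{agmon}, but apply the Gagliardo-Nirenberg inequality to the gauge-invariant quantity $w := 1 - |\Phi|^2$ instead of $|\Phi|^2$. The point is that $w$ measures the deviation from the boundary value, so it lies in $L^2$ rather than $L^1$. We will use the two-dimensional interpolation inequality
	\[
		\| w \|_{L^\infty_x} \lesssim \|w\|_{L^2_x}^{\frac12} \|\nabla^{(2)} w\|_{L^2_x}^{\frac12},
	\]
which follows from scaling, or from Fourier analysis by splitting frequencies at $|\xi| \sim (\|\nabla^{(2)} w\|_{L^2}/\|w\|_{L^2})^{1/2}$. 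Since $\|\Phi\|_{L^\infty_x}^2 \leq 1 + \|w\|_{L^\infty_x}$, it is enough to bound $\|w\|_{L^\infty_x}$.

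The first step is to estimate the second derivatives of $w$. By the covariant product rule \eqref{product}, $\nabla^{(2)} w$ is a sum of terms of the form $\bfD\Phi \cdot \bfD \Phi$ and $\Phi \cdot \bfD^{(2)}\Phi$. This gives
	\[
		\|\nabla^{(2)} w\|_{L^2_x} \lesssim \|\bfD\Phi\|_{L^4_x}^2 + \|\Phi\|_{L^\infty_x}\|\bfD^{(2)}\Phi\|_{L^2_x} \lesssim \|\Phi\|_{L^\infty_x}\|\bfD^{(2)}\Phi\|_{L^2_x},
	\]
where the last step uses the Ladyzhenskaya-type inequality \eqref{GN4}. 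Writing $M := \|\Phi\|_{L^\infty_x}$, which is finite by hypothesis, we then obtain
	\[
		M^2 \leq 1 + C \|w\|_{L^2_x}^{\frac12} M^{\frac12} \|\bfD^{(2)}\Phi\|_{L^2_x}^{\frac12}.
	\]

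The second step is to absorb the power of $M$ on the right. We split into two cases.
\begin{itemize}
	\item If $M \leq 2$, the claimed bound is trivial.
	\item If $M \geq 2$, then $M^2 - 1 \geq \tfrac34 M^2$, so $M^{3/2} \lesssim \|w\|_{L^2_x}^{1/2}\|\bfD^{(2)}\Phi\|_{L^2_x}^{1/2}$. Raising to the power $\tfrac23$ gives $M \lesssim \|w\|_{L^2_x}^{1/3}\|\bfD^{(2)}\Phi\|_{L^2_x}^{1/3}$.
\end{itemize}
Together the two cases give \eqref{inefficientLinfty}. The hypothesis $\Phi \in L^\infty$ is used exactly here: it guarantees that $M$ is finite, so the absorption is legitimate.

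The main obstacle is justifying the computation at the stated regularity. We need the product rule expansion of $\nabla^{(2)} w$ and the integration by parts behind \eqref{GN4} to hold without assuming $\Phi \in L^2$. The integration by parts in \eqref{GN4} involves only $\bfD\Phi$ and $\bfD^{(2)}\Phi$ weighted by $\Phi \in L^\infty$. We therefore plan to carry out all of the above for smooth, compactly supported (or suitably mollified) approximations. We then pass to the limit using the assumptions $\bfD^{(2)}\Phi \in L^2$, $w \in L^2$ and $\Phi \in L^\infty$, together with the finiteness of $\bfD\Phi \in L^4$, which comes from \eqref{GN4} itself, possibly after inserting a cutoff to control boundary terms.
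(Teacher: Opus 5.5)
Your proof is correct and is essentially the paper's argument. Both apply the Agmon inequality to $1-|\Phi|^2$, use the covariant product rule with \eqref{GN4} to get $\|\nabla^{(2)}|\Phi|^2\|_{L^2_x}\lesssim \|\Phi\|_{L^\infty_x}\|\bfD^{(2)}\Phi\|_{L^2_x}$, bound $\|\Phi\|_{L^\infty_x}^2\le 1+\|1-|\Phi|^2\|_{L^\infty_x}$, and absorb $\|\Phi\|_{L^\infty_x}^{1/2}$; your case split $M\le 2$ versus $M\ge 2$ just makes explicit what the paper calls ``interpolating.''

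One quibble concerns your regularity remark. You cannot take $\bfD\Phi\in L^4$ from \eqref{GN4} while using it to justify the integration by parts behind \eqref{GN4} for this $\Phi$; that is circular, so the cutoff argument has to supply this. The paper treats this step purely formally as well.
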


\begin{proof}
    Consider the usual Agmon-type inequality,
        \[
            \| w\|_{L^\infty_x} 
                \lesssim \bigl\|w\bigr\|_{L^2_x}^{\frac12} \bigl\|\nabla^{(2)} w\bigr\|_{L^2_x}^{\frac12}
        \]
    Applying this to the gauge-invariant quantity $1 - |\Phi|^2$ yields
        \begin{align*}
            \big\| 1 - |\Phi|^2 \big\|_{L^\infty_x} 
                &\lesssim \big\|1 - |\Phi|^2 \big\|_{L^2_x}^{\frac12} \ \big\| \nabla^{(2)} |\Phi|^2 \big\|_{L^2_x}^{\frac12} \\
                &\lesssim \big\|1 - |\Phi|^2 \big\|_{L^2_x}^{\frac12} \left( \|\bfD \Phi\|_{L^4_x}^2 + \|\Phi\|_{L^\infty_x}  \ \big\|\bfD^{(2)} \Phi\big\|_{L^2_x} \right)^{\frac12}\\
                &\lesssim  \|\Phi\|_{L^\infty_x}^{\frac12} \ \big\|1 - |\Phi|^2 \big\|_{L^2_x}^{\frac12} \ \big\| \bfD^{(2)} \Phi\big\|_{L^2_x}^{\frac12},
        \end{align*}
   using the covariant product rule \eqref{product} in the second line, and the magnetic interpolation inequality \eqref{GN4} in the third line. Inserting this into a trivial triangle inequality bound gives
        \[
            \|\Phi\|_{L^\infty_x}^2 
                \lesssim 1 + \|\Phi\|_{L^\infty_x}^{\frac12} \ \big\|1 - |\Phi|^2 \big\|_{L^2_x}^{\frac12} \ \big\| \bfD^{(2)} \Phi\big\|_{L^2_x}^{\frac12}.
        \]
   Interpolating, we can absorb the $L^\infty_x$-term to the left-hand side, completing the proof. 
\end{proof}

\section{Topological degree}\label{sec:topology}

The most natural definition of degree arises from the topological boundary condition $|\phi| \to 1$ at spatial infinity $|x| \to \infty$. This condition is topological in the sense that, formally, we can define a \textit{boundary map at infinity} $\phi^\infty : \mathbb S^1 \to \mathbb S^1$ by identifying $\mathbb S^1$ with the unit circle in $\C$ and setting
    \[
        \phi^\infty (\theta) 
            := \lim_{r \to \infty} \phi(r e^{i \theta}).
    \]
When $\phi^\infty$ is well-defined and continuous, one can associate to it a topological degree $\deg[\phi^\infty]$ which is integer-valued. Jaffe-Taubes \cite[Chapter II, Conjecture I]{JaffeTaubes1980} conjectured that one could extend this notion to general finite-energy configurations -- this was answered in the affirmative, first by Rivi\'ere \cite[Section II]{Riviere2002}, and later approaches by Czubak-Jerrard \cite[Section 2]{CzubakJerrard2014} and Halavati \cite[Lemma 2.1]{Halavati2024}. 

We will adopt Czubak-Jerrard's approach, defining the \textit{vorticity},  
   \[
        \omega (A, \phi)
            := \epsilon_{jk} \partial^j \Bigl(A^k + \Im(\overline \phi \bfD^k \phi)\Bigr),
    \]
and the \textit{topological degree},
     \[
        \deg[A, \phi]
            := \tfrac{1}{2\pi}\int_{\R^2} \omega(A, \phi) \, dx.
    \]
These are gauge-invariant quantities which are well-defined for finite-energy configurations, 

\begin{lemma}[Properties of vorticity, I]\label{lem:vorticity-I}
    Let $(A, \phi)$ be a finite-energy configuration on $\R^2$. Then
    \begin{enumerate}
        \item the vorticity obeys the Bogomolnyi-type identity
            \begin{equation}\label{eq:vorticity-identity}
            \begin{split}
                \omega(A, \phi)
                    &=  (1 - |\phi|^2) F_{12} + 2 \Im(\overline{\bfD_1 \phi} \bfD_2 \phi),
            \end{split}
            \end{equation}

        \item the vorticity is $L^1_x$-integrable with
            \begin{equation}\label{eq:vorticity-energy}
                \| \omega(A, \phi) \|_{L^1_x} \lesssim \cE[A, \phi].
            \end{equation}
        Furthermore, the degree is well-defined and integer-valued $\deg(A, \phi) \in \Z$, and, when the boundary map at infinity $\phi^\infty : \mathbb S^1 \to \mathbb S^1$ is well-defined, agrees with the usual topological degree $\deg[A, \phi] = \deg[\phi^\infty]$. 

    \end{enumerate}
\end{lemma}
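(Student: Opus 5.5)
Part (1) comes from a direct pointwise computation for smooth configurations, which we then extend by density. Write $J^k := \Im(\overline\phi \bfD^k \phi)$ for the supercurrent. Using the covariant product rule \eqref{product} we get
\[
	\partial_j J_k = \Im\bigl(\overline{\bfD_j\phi}\,\bfD_k\phi\bigr) + \Im\bigl(\overline\phi\,\bfD_j\bfD_k\phi\bigr).
\]
We antisymmetrise with $\epsilon_{jk}$. The first term contributes $2\Im(\overline{\bfD_1\phi}\bfD_2\phi)$. In the second term the commutator identity \eqref{commute} gives $\bfD_1\bfD_2\phi - \bfD_2\bfD_1\phi = -iF_{12}\phi$, so it contributes $\Im(-i F_{12}|\phi|^2) = -|\phi|^2F_{12}$. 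The potential term is simply $\epsilon_{jk}\partial^jA^k = F_{12}$. Adding these yields \eqref{vorticity-identity}. For a general finite-energy configuration, the right-hand side of \eqref{vorticity-identity} lies in $L^1$, since $\bfD\phi$, $F_{12}$ and $1-|\phi|^2$ are all in $L^2$. The left-hand side is a distribution. We approximate $(A,\phi)\in H^1_\loc$ by smooth configurations in $H^1_\loc$ and pass to the limit in $\mathcal D'$; each term converges in $L^1_\loc$ because $\phi\in L^4_\loc$ by Sobolev embedding.

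For \eqref{vorticity-energy}, we apply Cauchy--Schwarz/AM--GM pointwise to the identity:
\[
	|(1-|\phi|^2)F_{12}| \le \tfrac12|F_{12}|^2 + \tfrac12(1-|\phi|^2)^2, \qquad 2|\Im(\overline{\bfD_1\phi}\bfD_2\phi)| \le |\bfD_1\phi|^2 + |\bfD_2\phi|^2 .
\]
Integrating, we obtain $\|\omega\|_{L^1}\lesssim \cE$. This step is in fact the Bogomoln'yi bound, up to constants.

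Integrality is the main obstacle. Here we follow Czubak--Jerrard. Let $\chi_R(x)=\chi(x/R)$ be a smooth cutoff. Since $\omega\in L^1$, dominated convergence gives $\int\omega = \lim_R \int \chi_R\,\omega$. Using the divergence form of $\omega$ and integrating by parts,
\[
	\int \chi_R\,\omega = -\int \epsilon_{jk}\partial^j\chi_R\,(A^k+J^k)\,dx .
\]
We rewrite $A+J = (1-|\phi|^2)A + \Im(\overline\phi\nabla\phi)$. Where $|\phi|\ge\frac12$ we write $\phi=|\phi|e^{i\theta}$ locally, so that $\Im(\overline\phi\nabla\phi)/|\phi|^2 = \nabla\theta$ and
\[
	A + J = \nabla\theta + (1-|\phi|^2)(A-\nabla\theta)
\]
with a gauge-invariant remainder. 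The term $\nabla\theta$ integrated against $\nabla^\perp\chi_R$ computes $2\pi$ times a winding number, provided $|\phi|$ stays close to $1$ on a large set of annuli. We then average over $R\in[R_0,2R_0]$, i.e. use the co-area formula on the annulus $\{R_0<|x|<2R_0\}$. Because the energy on that annulus tends to $0$ and $|\nabla|\phi||\le|\bfD\phi|$ by \eqref{diamagnetic}, for most radii the circle $\{|x|=R\}$ has $|\phi|\ge \frac12$, so the winding number $\deg(\phi/|\phi|)$ on that circle is defined. The error terms are controlled by $\|(1-|\phi|^2)(A-\nabla\theta)\|$, which bounds a gauge-invariant covariant quantity $|\bfD\phi|/|\phi|$ times $(1-|\phi|^2)$; via Cauchy--Schwarz this is bounded by energy on the annulus and so tends to $0$. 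Hence $\frac1{2\pi}\int\omega$ is a limit of integers, and therefore an integer. When $\phi^\infty$ exists and is continuous, these winding numbers equal $\deg[\phi^\infty]$ for large $R$. The delicate point is producing a good set of radii with simultaneous control of $\inf|\phi|$ and the one-dimensional energy along the circles, which the averaging handles. Alternatively, one may simply invoke \cite[Section 2]{CzubakJerrard2014} for this part.
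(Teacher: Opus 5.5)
Your computation for part (1) and the pointwise Cauchy--Schwarz bound for the $L^1$ estimate are exactly the paper's argument. For integrality the paper does nothing beyond citing \cite[Lemma 2.3]{CzubakJerrard2014}, which is your fallback. Your density remark is an extra the paper omits; it also needs $A\in L^4_{\mathrm{loc}}$, not only $\phi$, to control $A\phi$ and $A|\phi|^2$.

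Your self-contained winding-number sketch is a different, more elementary route, and it is the standard one. One step should be tightened, however.

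\textbf{Bad circles.} With the smooth cutoff $\chi_R$ you integrate $\nabla\theta$ over the whole annulus. But finite energy does not prevent $\phi$ from vanishing somewhere in every dyadic annulus, since zeros with a logarithmic profile cost arbitrarily little energy. So the splitting $A+J=\nabla\theta+(1-|\phi|^2)(A-\nabla\theta)$ is unavailable on those circles, and averaging in $R$ alone does not say what happens to their contribution.

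\textbf{Sharp disks instead.} The clean version drops the smooth cutoff. For a.e.\ $r$ (so that $\phi|_{\partial B_r}\in H^1$ is continuous) one has $\Omega(r):=\int_{B_r}\omega\,dx=\oint_{\partial B_r}(A+J)\cdot d\ell$. Moreover $\Omega(r)\to\int_{\R^2}\omega\,dx$ because $\omega\in L^1$.

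\textbf{Choosing good radii.} Set $g(r):=\oint_{\partial B_r}\bigl(|\bfD\phi|^2+(1-|\phi|^2)^2\bigr)\,d\ell$. Then $\int_0^\infty g\,dr\lesssim\cE$, so there are radii $r_n\to\infty$ with $g(r_n)\to0$. On such circles a one-dimensional Modica--Mortola argument makes $|\phi|$ uniformly close to $1$. With $H(s):=\int_1^s|1-\sigma^2|\,d\sigma$, the diamagnetic inequality and AM--GM give $\mathrm{osc}_{\partial B_r}H(|\phi|)\le\tfrac12 g(r)$, while $\min_{\partial B_r}(1-|\phi|^2)^2\le g(r)/(2\pi r)$.

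\textbf{Conclusion and error term.} It follows that $\Omega(r_n)=2\pi\deg\bigl(\phi/|\phi|;\partial B_{r_n}\bigr)+O(g(r_n))$, hence $\int\omega\in 2\pi\Z$. Here the pointwise fact is $|A-\nabla\theta|\le|\bfD\phi|/|\phi|$; your wording has the inequality reversed. The error is controlled by the one-dimensional energy on the circle, not by the energy of the annulus.

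\textbf{Agreement with $\deg[\phi^\infty]$.} Identifying the limit with $\deg[\phi^\infty]$ needs $\phi(re^{i\vartheta})\to\phi^\infty(\vartheta)$ uniformly in $\vartheta$, not merely pointwise. The paper leaves this point unaddressed as well.
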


\begin{proof}
    \leavevmode
    \begin{enumerate}
        \item Differentiating and commuting appropriately yields
        \begin{align*}
            \omega(A, \phi)
                &= F_{12} + \epsilon_{jk} \partial^j \Im(\overline \phi \bfD^k \phi) \\
                &= F_{12} + \epsilon_{jk} \left( \Im(\overline{\bfD^j \phi} \bfD^k \phi) + \Im(\overline \phi \bfD^j \bfD^k  \phi) \right) \\
                &= (1 - |\phi|^2) F_{12} + 2 \Im(\overline{\bfD_1 \phi} \bfD_2 \phi),
        \end{align*}
        which yields \eqref{vorticity-identity}.

        \item The estimate \eqref{vorticity-energy} follows from the identity \eqref{vorticity-identity} and Cauchy-Schwartz. The integrality of the degree is due to Czubak-Jerrard \cite[Lemma 2.3]{CzubakJerrard2014}. 
        
    \end{enumerate}
\end{proof}

Our main observation here is the preservation of topological degree for $1$-parameter families of configurations $(A, \phi)$ which have uniformly bounded energy and are finite $L^2$-length,

\begin{proposition}[Properties of vorticity, II]\label{lem:vorticity-II}
    Let $(A, \phi)$ be a configuration on $I \times \R^2$ and set $A^{(h)} \equiv A_j (h, x) dx^j$ and $\phi^{(h)} \equiv \phi(h, x)$ such that $(A^{(h)}, \phi^{(h)})$ is a $1$-parameter family of configurations on $\R^2$ indexed by $h \in I$. Suppose that $(A, \phi)$ has finite $L^2$-length and the family $(A^{(h)}, \phi^{(h)})$ is uniformly bounded in energy, i.e.
    \[
        \sup_{h \in I} \cE[A^{(h)}, \phi^{(h)}] + \int_I \| \bfD_h \phi (h) \|_{L^2_x} + \sum_{j = 1, 2}\| F_{hj} \|_{L^2_x} \, dh < \infty.
    \]
Then 
    \begin{enumerate} 
        \item the vorticity obeys the density-flux identity
            \begin{equation}\label{eq:difference-flux}
                \partial_h \omega(A^{(h)}, \phi^{(h)})
                    = \epsilon_{jk} \partial^j \Bigl( (1 - |\phi|^2) F\indices{_h^k}  +  2 \Im(\overline{\bfD_h \phi} \bfD^k \phi)\Bigr),
            \end{equation}
        
        \item the degree of the family is preserved, i.e.
            \begin{equation}
                \deg[A^{(h_1)}, \phi^{(h_1)}] = \deg[A^{(h_2)}, \phi^{(h_2)}] \qquad \text{for all $h_1, h_2 \in I$}.
            \end{equation}
        
    \end{enumerate}
\end{proposition}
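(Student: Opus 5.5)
For (1), I would compute formally for smooth configurations and then extend by density. Start from the definition $\omega = \epsilon_{jk}\partial^j(A^k + \Im(\overline\phi \bfD^k\phi))$ and differentiate in $h$. Using the product rule \eqref{product} together with the commutator identity \eqref{commute}, in the form $\bfD_h \bfD^k \phi = \bfD^k \bfD_h\phi - iF\indices{_h^k}\phi$, we get
\[
	\partial_h \bigl( A^k + \Im(\overline\phi \bfD^k\phi) \bigr) = \partial_h A^k + \Im(\overline{\bfD_h\phi}\bfD^k\phi) + \Im(\overline\phi \bfD^k\bfD_h\phi) - |\phi|^2 F\indices{_h^k}.
\]
We have $\partial_h A^k = F\indices{_h^k} + \partial^k A_h$, and the gradient $\partial^k A_h$ is killed by $\epsilon_{jk}\partial^j$. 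Next, $\Im(\overline\phi\bfD^k\bfD_h\phi) = \partial^k\Im(\overline\phi\bfD_h\phi) - \Im(\overline{\bfD^k\phi}\bfD_h\phi)$ by \eqref{product}; its gradient part is again annihilated, and the remainder equals $+\Im(\overline{\bfD_h\phi}\bfD^k\phi)$. Collecting terms gives $(1-|\phi|^2)F\indices{_h^k} + 2\Im(\overline{\bfD_h\phi}\bfD^k\phi)$ under the curl, which is \eqref{difference-flux}. For rough configurations I read the identity distributionally. I would justify it by mollifying in $x$ (and in $h$), or simply by testing against $\psi\in C_c^\infty(I\times\R^2)$ and checking that each term is locally integrable under the finite-energy assumptions. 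Here $1-|\phi|^2\in L^2$ and $\bfD\phi\in L^2$, while $\bfD_h\phi, F_{hj}\in L^1_hL^2_x$.

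For (2), integrate \eqref{difference-flux} against a cutoff $\chi_R(x)=\chi(x/R)$ and over $h\in[h_1,h_2]$:
\[
	\int \chi_R\,\omega^{(h_2)} - \int\chi_R\,\omega^{(h_1)} = -\int_{h_1}^{h_2}\!\!\int \epsilon_{jk}\partial^j\chi_R \Bigl((1-|\phi|^2)F\indices{_h^k} + 2\Im(\overline{\bfD_h\phi}\bfD^k\phi)\Bigr)\,dx\,dh .
\]
As $R\to\infty$, the left side converges to $2\pi(\deg^{(h_2)}-\deg^{(h_1)})$ by dominated convergence, since $\omega^{(h)}\in L^1$ by \eqref{vorticity-energy}. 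On the right, $|\nabla\chi_R|\lesssim R^{-1}\mathbf 1_{R\le|x|\le 2R}$, and Cauchy–Schwarz bounds the integrand at each $h$ by
\[
	R^{-1}\bigl(\|1-|\phi|^2\|_{L^2(A_R)}\|F_{h\cdot}\|_{L^2} + \|\bfD\phi\|_{L^2(A_R)}\|\bfD_h\phi\|_{L^2}\bigr) \lesssim R^{-1}\cE^{1/2}\bigl(\|F_{h\cdot}\|_{L^2}+\|\bfD_h\phi\|_{L^2}\bigr),
\]
where $A_R$ denotes the annulus $R\le|x|\le 2R$. This is integrable in $h$ by the finite-length hypothesis, so the right side is $O(R^{-1})\to0$. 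Hence the degrees agree. No integrality is even needed, though Lemma \ref{lem:vorticity-I} confirms consistency.

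The main obstacle is the rigorous justification of the distributional identity at the regularity available. The products $\overline{\bfD_h\phi}\bfD^k\phi$ and $(1-|\phi|^2)F_{hk}$ are only $L^1_{h,x}$ on compact sets, which suffices for pairing with $\nabla\chi_R$. The delicate point is the time derivative of $\omega$ and the cancellation of the $\partial^k A_h$ terms when $A_h$ itself has no integrability. I would handle this by working in the weak formulation in $x$ only, applied to the integrated-in-$h$ form. An alternative is to exploit gauge invariance: first pass to a gauge with $A_h=0$, where $F_{hk}=\partial_hA_k$, then regularise by mollification and pass to the limit using the stated bounds.
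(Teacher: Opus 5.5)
Your proposal is correct and follows the paper's proof essentially step for step. For (1) you use the same product-rule and commutator computation; your observation that $\partial_h A_k = F_{hk} + \partial_k A_h$, with the gradient annihilated by the curl, is just the Bianchi identity the paper invokes. For (2) you use the same argument: test against $\chi(x/R)$, apply dominated convergence on the left, and use Cauchy--Schwarz with the finite-length hypothesis to get an $O(R^{-1})$ bound on the right. Your extra remarks on justifying the identity distributionally go beyond the paper, which does the computation only formally.
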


\begin{proof}
    \leavevmode
    \begin{enumerate}
        \item By the Bianchi identity \eqref{bianchi},
            \begin{align*}
                \partial_h \epsilon_{jk} \partial^j A 
                    = \partial_h F_{12} 
                    = \epsilon_{jk} \partial^j F\indices{_h^k} .
            \end{align*}
        Differentiating the charge current with respect to $h$ and commuting appropriately gives
            \begin{align*}
                \partial_h \epsilon_{jk} \partial^j \Im(\overline \phi \bfD^k \phi) 
                    &= \epsilon_{jk} \partial^j \Big( \Im(\overline{\bfD_h \phi} \bfD^k \phi) + \Im(\overline \phi \bfD_h \bfD^k \phi) \Big)\\
                    &= \epsilon_{jk} \partial^j \Big( \Im(\overline{\bfD_h \phi} \bfD^k \phi) + \Im(\overline \phi \bfD^k \bfD_h \phi) - |\phi|^2 F\indices{_h^k} \Big)\\
                    &= \epsilon_{jk} \partial^j \Big( 2\Im(\overline{\bfD_h \phi} \bfD^k \phi)  - |\phi|^2 F\indices{_h^k} \Big).
            \end{align*}
        Collecting the two calculations yields \eqref{difference-flux}.

        \item Fix a smooth cut-off $\chi \in C^\infty_c (\R^2)$ such that $\chi \equiv 1$ in a neighborhood of the origin, and test the identity \eqref{difference-flux} against its rescaling $\chi(\tfrac{x}{R})$ for $R > 0$. Integrating-by-parts on $[h_1, h_2] \times \R^2$ gives
            \begin{align*}
                \int_{\R^2} \omega(A^{(h)}, \phi^{(h)}) \, \chi\big(\tfrac{x}{R}\big) \, dx \Big|_{h = h_1}^{h = h_2}
                    &= -\frac1R\int_{h_1}^{h_2} \int_{\R^2}  \epsilon_{jk} (\partial^j \chi)\big(\tfrac{x}{R}\big) \Bigl( (1 - |\phi|^2) F\indices{_h^k}  +  2 \Im(\overline{\bfD_h \phi} \bfD^k \phi)\Bigr) \, dx dh.
            \end{align*}
        Taking $R \to \infty$, the expression on the left converges to the differences of the degrees by the dominated convergence theorem, 
            \[
                \lim_{R \to \infty} \frac{1}{2\pi}\text{L.H.S.}= \deg(A^{(h_2)}, \phi^{(h_2)}) - \deg(A^{(h_1)}, \phi^{(h_1)}).
            \]
        On the right-hand side, Holder's inequality yields 
            \begin{align*}
                \Big|\text{R.H.S.}\Big| 
                    &\lesssim \frac1R \Big(\sup_{h \in I} \cE[A^{(h)}, \phi^{(h)}]^{\frac12}\Big) \cdot  \Big(\int_I \| \bfD_h \phi (h) \|_{L^2_x} + \sum_{j = 1, 2}\| F_{hj} \|_{L^2_x} \, dh \Big) \longrightarrow 0.
            \end{align*}
        This proves the result. 
    \end{enumerate}
\end{proof}

\section{Self-dual structure}\label{sec:self-dual}

In this section, we reveal the self-dual structure of the problem by recasting the abelian Higgs energy and its gradient flow in terms of the \textit{covariant Cauchy-Riemann operators}
    \begin{align*}
        \bfD_z
            &= \tfrac12(\bfD_1 - i \bfD_2), \\
        \bfD_{\overline z}
            &= \tfrac12(\bfD_1 + i \bfD_2),
    \end{align*}
and the \textit{Gauss tension field},
    \begin{align*}
        \Gauss 
            := \tfrac12(1 - |\phi|^2) - F_{12}.
    \end{align*}

We begin by recalling Bogomoln'yi's \cite{Bogomolny1976} eponymous identity in our own notation -- given a finite-energy configuration $(A, \phi)$ on $\R^2$, we define its \textit{Bogomol'nyi energy} by
    \[
        \cE_{\, \overline\partial} [A, \phi]
            := \tfrac12 \int_{\R^2} \mathfrak |2 \bfD_{\overline z} \phi|^2 + |\Gauss|^2 \, dx.
    \]
\begin{proposition}[Bogomoln'yi identity]\label{prop:energyidentity}
	Let $(A, \phi)$ be a configuration on $\R^2$, then its self-dual abelian Higgs energy density may be recast in terms of its Bogomol'nyi energy density and vorticity by
		\begin{equation}\label{eq:Bogomolnyi-identity}
        \begin{split}
			 |\bfD_1 \phi|^2 + |\bfD_2 \phi|^2 + |F_{12}|^2 + \tfrac14 (1 - |\phi|^2)^2 
                &=  |2 \bfD_{\overline z} \phi|^2 + |\Gauss|^2 + \omega(A, \phi).
		\end{split}
        \end{equation}
    If $(A, \phi)$ is a finite-energy configuration, then 
        \begin{equation}\label{eq:Bogomolnyi-energy}
            \cE[A, \phi] 
                = \cE_{\, \overline\partial} [A, \phi] + \pi \deg [A, \phi].
        \end{equation}
\end{proposition}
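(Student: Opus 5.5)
The plan is to prove the pointwise identity \eqref{Bogomolnyi-identity} by direct expansion, and then integrate it to get \eqref{Bogomolnyi-energy}.

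\emph{The $\bfD_{\overline z}$ term.} Since $2\bfD_{\overline z}\phi = \bfD_1\phi + i\bfD_2\phi$, expanding the modulus gives
\[
|2\bfD_{\overline z}\phi|^2 = |\bfD_1\phi|^2 + |\bfD_2\phi|^2 + 2\Re\bigl(\overline{\bfD_1\phi}\, i\bfD_2\phi\bigr) = |\bfD_1\phi|^2 + |\bfD_2\phi|^2 - 2\Im\bigl(\overline{\bfD_1\phi}\bfD_2\phi\bigr).
\]

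\emph{The Gauss term.} Expanding the square,
\[
|\Gauss|^2 = \tfrac14(1-|\phi|^2)^2 + F_{12}^2 - (1-|\phi|^2)F_{12}.
\]

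\emph{Pointwise identity.} Adding the two expansions, the cross terms are exactly $-\bigl[(1-|\phi|^2)F_{12} + 2\Im(\overline{\bfD_1\phi}\bfD_2\phi)\bigr]$. By the vorticity identity \eqref{vorticity-identity} of Lemma \ref{lem:vorticity-I}, this bracket equals $\omega(A,\phi)$. Hence
\[
|2\bfD_{\overline z}\phi|^2 + |\Gauss|^2 = |\bfD_1\phi|^2 + |\bfD_2\phi|^2 + F_{12}^2 + \tfrac14(1-|\phi|^2)^2 - \omega(A,\phi),
\]
which rearranges to \eqref{Bogomolnyi-identity}. This step is pure algebra, so it holds pointwise almost everywhere for any $H^1_\loc$ configuration.

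\emph{Integrated identity.} For a finite-energy configuration, the left side of \eqref{Bogomolnyi-identity} lies in $L^1$. By \eqref{vorticity-energy}, $\omega(A,\phi)$ is also $L^1$, so $|2\bfD_{\overline z}\phi|^2 + |\Gauss|^2$ is integrable as well. We may therefore integrate \eqref{Bogomolnyi-identity} over $\R^2$ and multiply by $\tfrac12$. Since $\tfrac12\int_{\R^2}\omega\,dx = \pi\deg[A,\phi]$ by the definition of the degree, this yields \eqref{Bogomolnyi-energy}.

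The only point needing care is this integrability, which lets us split the integral of the sum into separate integrals without any boundary terms. It is already provided by Lemma \ref{lem:vorticity-I}, so I expect no genuine obstacle.
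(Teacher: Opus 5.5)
Your proposal is correct and follows the paper's proof essentially verbatim: you expand $|2\bfD_{\overline z}\phi|^2$ and $|\Gauss|^2$, identify the cross terms as $-\omega(A,\phi)$ via the vorticity identity of Lemma \ref{lem:vorticity-I}, and then integrate. Your final step is slightly more careful than the paper's one-line version. You note explicitly that $\omega \in L^1$ justifies splitting the integral, and that only the definition of $\deg$ is needed there, not its integrality.
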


\begin{proof}
	We expand the terms on the right-hand side,  
	    \[
            |2 \bfD_{\overline z} \phi|^2
                = \overline{\bfD^j \phi} \bfD_j \phi - 2 \Im(\overline{\bfD_1 \phi} \bfD_2 \phi), \label{eq:Dbar-square}
        \]
	and
		\[
			|\Gauss|^2 
                = |F_{12}|^2 + \tfrac14 (1 - |\phi|^2)^2 - (1 - |\phi|^2) F_{12} , \label{eq:Gauss-square} 
		\]
    Adding the expressions above with the vorticity identity \eqref{vorticity-identity} yields the identity \eqref{Bogomolnyi-identity}. Integrating and using quantisation of the vorticity yields the energy identity \eqref{Bogomolnyi-energy}. 
\end{proof}

This identity characterises the minimisers of the self-dual abelian Higgs energy within each (non-negative) topological class as solutions to the \textit{Bogomoln'yi equations} 
	\begin{equation}\label{eq:bogomolnyi}\tag{$\mathrm{B}$}
	\begin{split}
		(\bfD_1 + i \bfD_2) \phi 
			&= 0, \\
		F_{12} 
			&= \tfrac12(1 - |\phi|^2). 
	\end{split}
	\end{equation}
Remarkably, this first-order equation characterises all critical points of the self-dual abelian Higgs energy with non-negative topological degree. We will need this property, along with some qualitative properties of the charge, both of which are due to Jaffe-Taubes \cite[Chapter III, Theorem 1.1]{JaffeTaubes1980},

\begin{theorem}[Properties of vortices]\label{thm:JT}
    Let $(A, \phi) \in \cM^N$ be a finite-energy critical point of the self-dual abelian Higgs energy with non-negative topological degree $N$. Then $(A, \phi)$ is a solution to the Bogomoln'yi equations \eqref{bogomolnyi}. Furthermore, the magnetic field is non-negative and bounded, with 
        \begin{align}\label{eq:vortex-Linfty}
            0 \leq F_{12} \leq 1, 
        \end{align}
    and integrable $F_{12} \in L^1 (\R^2)$, with quantised integral,
        \begin{equation}\label{eq:quantised}
            \tfrac{1}{2\pi}\int_{\R^2} F_{12} \, dx 
                = N.
        \end{equation}
\end{theorem}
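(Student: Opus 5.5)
This theorem is attributed to Jaffe--Taubes, but a proof along classical lines would go as follows.

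\emph{Step 1: critical points solve the Bogomoln'yi equations.} We use the Euler--Lagrange equations, i.e.\ the stationary version of \eqref{AHG}:
\[
\bfD^j\bfD_j\phi + \tfrac12(1-|\phi|^2)\phi = 0, \qquad \epsilon_{jk}\partial^j F_{12} = \Im(\overline\phi\bfD_k\phi)\ \text{(up to sign)}.
\]
Set $\psi := 2\bfD_{\overline z}\phi$ and $g := \Gauss$. The identity $4\bfD_z\bfD_{\overline z} = \bfD^j\bfD_j + F_{12}$ follows from the commutator identity \eqref{commute}. Together with the field equations it gives a first-order coupled system
\[
2\bfD_z\psi = -\,g\,\phi, \qquad \partial_{\overline z} g \propto \overline\phi\,\psi .
\]
We then compute $\tfrac12\Delta(|\psi|^2 + g^2)$, or more simply pair the first equation with $\overline\psi$ and the second with $g$ and integrate by parts. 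The cross terms cancel by self-duality, which yields
\[
\int |\bfD_z\psi|^2 + |\nabla g|^2 + |\phi|^2\bigl(|\psi|^2+g^2\bigr)\,dx \;\lesssim\; 0 .
\]
Hence $\psi=0$ and $g$ is constant; since $g\in L^2$, we get $g=0$.

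The main obstacle is justifying these integrations by parts for merely finite-energy configurations. Our plan is to first use elliptic regularity in a local Coulomb gauge to get smoothness, then exponential decay of $1-|\phi|^2$ and of $|\bfD\phi|$. That decay comes from the subharmonic-type inequality $\Delta(1-|\phi|^2)\ge |\phi|^2(1-|\phi|^2)-2|\bfD\phi|^2$ combined with the maximum principle.

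\emph{Step 2: the bounds \eqref{vortex-Linfty}.} With $u=\log|\phi|^2$ away from zeros, the Bogomoln'yi equations give the Taubes equation $-\Delta u = 1-e^u - 4\pi\sum\delta_{z_k}$. The maximum principle applied to $w=|\phi|^2-1$ gives $|\phi|\le 1$: at an interior positive maximum we would have $\Delta|\phi|^2\ge 0$, but $\Delta|\phi|^2 = 2|\bfD\phi|^2 - 2F_{12}|\phi|^2\cdot(\text{sign})$ forces a contradiction, and $|\phi|\to1$ at infinity. Therefore $F_{12}=\tfrac12(1-|\phi|^2)\in[0,\tfrac12]\subset[0,1]$.

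\emph{Step 3: integrability and quantisation \eqref{quantised}.} Integrability $F_{12}\in L^1$ follows from the exponential decay of $1-|\phi|^2$ obtained in Step 1. For quantisation, on solutions of \eqref{bogomolnyi} we have $|2\bfD_{\overline z}\phi|=0$ and $\Gauss=0$. The vorticity identity \eqref{vorticity-identity} then reads
\[
\omega = (1-|\phi|^2)F_{12} + |\bfD\phi|^2 ,
\]
using $2\Im(\overline{\bfD_1\phi}\bfD_2\phi)=|\bfD_1\phi|^2+|\bfD_2\phi|^2$ when $\bfD_2\phi=i\bfD_1\phi$. On the other hand, the definition gives $\omega = F_{12}+\operatorname{curl}\Im(\overline\phi\bfD\phi)$. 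The current $\Im(\overline\phi\bfD\phi)$ decays exponentially, so integrating the curl against a cutoff $\chi(x/R)$, exactly as in Proposition~\ref{lem:vorticity-II}, contributes nothing as $R\to\infty$. Thus $\int F_{12}=\int\omega = 2\pi\deg=2\pi N$ by Lemma~\ref{lem:vorticity-I}.
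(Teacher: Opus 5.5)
The paper does not prove this theorem; it cites Jaffe--Taubes. Your Steps 2 and 3 are essentially sound. Step 1, however, carries the real content, and the identity it rests on is false.

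\textbf{The gap in Step 1.} Expand $\|2\bfD_z\psi+g\phi\|_{L^2}^2+\|2\partial_{\overline z}g+\overline\phi\psi\|_{L^2}^2=0$ and integrate by parts, using that the adjoint of $2\bfD_z$ is $-2\bfD_{\overline z}$ and that $2\bfD_{\overline z}(g\phi)=2(\partial_{\overline z}g)\phi+g\psi$. The cross terms containing $\partial g$ do cancel, but a cubic term survives:
\[
\int_{\R^2}|2\bfD_z\psi|^2+|\nabla g|^2+|\phi|^2\bigl(|\psi|^2+g^2\bigr)\,dx=2\int_{\R^2}g\,|\psi|^2\,dx .
\]
Pairing each first-order equation separately is worse: it only yields $\int g\,\overline\phi\psi\,dx=0$. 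Since $(-\Delta+|\phi|^2)g=|\psi|^2$ forces $g\ge0$, the right-hand side has exactly the wrong sign, and a general critical point offers no smallness to absorb it.

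This is not a bookkeeping issue. Your argument never uses $N\ge0$. Yet the conjugate $(-A,\overline\phi)$ of a nontrivial degree-one vortex $(A,\phi)$ is a finite-energy critical point of degree $-1$. It satisfies your first-order system, but its tension is $\psi=2\,\overline{\bfD_z\phi}\neq0$ and $g=1-|\phi|^2>0$. Any correct proof must use the sign of the degree to rule out the anti-self-dual alternative.

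\textbf{One way to close Step 1.} Set $\widetilde g:=\tfrac12(1-|\phi|^2)+F_{12}$.
\begin{itemize}
\item The static versions of \eqref{F12-parabolic} and \eqref{charge-parabolic} give $(-\Delta+|\phi|^2)g=|2\bfD_{\overline z}\phi|^2$ and $(-\Delta+|\phi|^2)\widetilde g=|2\bfD_z\phi|^2$. Testing against the negative parts gives $g,\widetilde g\ge0$.
\item Varying along the dilations $(\lambda A(\lambda x),\phi(\lambda x))$ at a critical point (after a routine cut-off) gives the virial identity $\int F_{12}^2\,dx=\int\tfrac14(1-|\phi|^2)^2\,dx$. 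This says $\int g\widetilde g\,dx=0$, hence $g\widetilde g\equiv0$.
\item Unless $(A,\phi)$ is the vacuum, the strong maximum principle for $(-\Delta+|\phi|^2)\tfrac12(1-|\phi|^2)=|\bfD\phi|^2$ gives $g+\widetilde g=1-|\phi|^2>0$. By connectedness, either $g\equiv0$ or $\widetilde g\equiv0$.
\item If $g\equiv0$, then $\psi\equiv0$ and \eqref{bogomolnyi} holds.
\item If $\widetilde g\equiv0$, then $\bfD_z\phi\equiv0$. The anti-self-dual identity $\cE=\tfrac12\int|2\bfD_z\phi|^2+\widetilde g^2\,dx-\pi N$ then forces $N=0=\cE$, so the configuration is the vacuum.
\end{itemize}

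\textbf{Smaller points.}
\begin{itemize}
\item In Step 2, at an interior maximum $\Delta|\phi|^2\le0$, not $\ge0$. The contradiction then comes from $\Delta|\phi|^2=2|\bfD\phi|^2+(|\phi|^2-1)|\phi|^2>0$ wherever $|\phi|>1$.
\item Your inequality for exponential decay has $-2|\bfD\phi|^2$ on the wrong side for a comparison argument, so it does not give decay as stated.
\item Decay is not needed in Step 3. Since $F_{12}\ge0$, $\omega\in L^1$ and $\Im(\overline\phi\bfD\phi)\in L^2$ (because $|\phi|\le1$), your cut-off argument plus monotone convergence gives $\int F_{12}\,dx=2\pi N$, and in particular $F_{12}\in L^1$.
\end{itemize}
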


Another useful consequence of the Bogomoln'yi identity is that a configuration nearly minimises the self-dual abelian Higgs energy within its topological class if and only if the Bogomoln'yi energy is small. Thus, we can recast the assumptions of Theorem \ref{thm:main} as smallness of the \textit{Bogomoln'yi tension field} $(\Gauss, \bfD_{\overline z}\phi)$. With this in mind, we turn to revealing the self-dual structure of the gradient flow \eqref{AHG} by deriving the equations of motion for the Bogomoln'yi tension field. As a preliminary step, we derive the equations of motion for the magnetic field, charge density, and differentiated scalar field,

\begin{lemma}[Equations of motion]
    Let $(A, \phi)$ be a configuration on $[0, \infty) \times \R^2$ solving the self-dual abelian Higgs gradient flow \eqref{AHG}. Then the following covariant parabolic equations hold:
    \begin{itemize}
        \item the magnetic field, 
            \begin{equation}
                \bigl(\partial_t - \Delta + |\phi|^2 \bigr) F_{12} 
                    = 2 \Im(\overline{\bfD_1 \phi} \bfD_2 \phi), \label{eq:F12-parabolic}
            \end{equation}
        \item the charge density, 
            \begin{equation}
                \bigl(\partial_t - \Delta + |\phi|^2\bigr) \tfrac12 (1 - |\phi|^2) 
                    = \overline{\bfD^j \phi} \bfD_j \phi, \label{eq:charge-parabolic}
            \end{equation}

        \item the differentiated scalar field, 
            \begin{equation}
                \Bigl( \bfD_t - \bfD^j \bfD_j + |\phi|^2 \Bigr) \bfD_j \phi 
                    = \tfrac12(1 - |\phi|^2) \bfD_j \phi - 2i F_{12} \epsilon_{j\ell} \bfD^\ell \phi. \label{eq:Dphi-parabolic}
            \end{equation}
    \end{itemize}
\end{lemma}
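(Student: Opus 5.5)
Each of the three identities comes from a direct covariant computation. I differentiate the relevant equation of \eqref{AHG} and commute covariant derivatives using \eqref{commute}, \eqref{laplace-commute}, the Bianchi identity \eqref{bianchi} and the product rule \eqref{product}. I work in a general gauge, since every quantity involved is gauge-covariant or gauge-invariant, and I assume enough regularity to differentiate freely (the global well-posedness theory in Appendix \ref{sec:gwp} supplies this).

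\emph{Magnetic field.} By Bianchi, $\partial_t F_{12} = \partial_1 F_{t2} - \partial_2 F_{t1}$. Substituting Ampère's law gives
\[
\partial_t F_{12} = \Delta F_{12} + \partial_1 \Im(\overline\phi \bfD_2\phi) - \partial_2 \Im(\overline\phi\bfD_1\phi).
\]
The last two terms are exactly $\epsilon_{jk}\partial^j \Im(\overline\phi\bfD^k\phi)$. The computation in the proof of Lemma \ref{lem:vorticity-I} shows this equals $-|\phi|^2F_{12} + 2\Im(\overline{\bfD_1\phi}\bfD_2\phi)$, which yields \eqref{F12-parabolic}.

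\emph{Charge density.} By \eqref{product}, $\partial_t |\phi|^2 = 2\Re(\overline\phi \bfD_t\phi)$. Applying \eqref{product} twice gives
\[
\Delta|\phi|^2 = 2\Re(\overline\phi \bfD^j\bfD_j\phi) + 2\overline{\bfD^j\phi}\bfD_j\phi.
\]
Substituting the scalar equation for $\bfD_t\phi$ yields
\[
(\partial_t - \Delta)\tfrac12(1-|\phi|^2) = -\Re(\overline\phi \cdot \tfrac12(1-|\phi|^2)\phi) + \overline{\bfD^j\phi}\bfD_j\phi,
\]
and the first term on the right is $-|\phi|^2 \cdot \tfrac12(1-|\phi|^2)$. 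Moving it to the left gives \eqref{charge-parabolic}.

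\emph{Differentiated scalar field.} This step has the most bookkeeping. I apply $\bfD_j$ to the scalar equation. There are three contributions:
\begin{itemize}
\item[(i)] $[\bfD_j, \bfD_t] = -iF_{jt}$ by \eqref{commute}, so $\bfD_j\bfD_t\phi = \bfD_t\bfD_j\phi - iF_{jt}\phi$.
\item[(ii)] By \eqref{laplace-commute}, $\bfD_j\bfD^k\bfD_k\phi = \bfD^k\bfD_k\bfD_j\phi - 2iF_{j}{}^{k}\bfD_k\phi - i\partial^kF_{jk}\phi$. In two dimensions $F_j{}^{k}\bfD_k\phi = F_{12}\epsilon_{jk}\bfD^k\phi$.
\item[(iii)] The nonlinearity gives $\bfD_j(\tfrac12(1-|\phi|^2)\phi) = \tfrac12(1-|\phi|^2)\bfD_j\phi - \Re(\overline\phi\bfD_j\phi)\phi$.
\end{itemize}
The main point is that the zeroth-order terms in $\phi$ must recombine. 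The terms $-iF_{jt}\phi + i\partial^kF_{jk}\phi$ reduce, via Ampère's law $F_{tj} = \Im(\overline\phi\bfD_j\phi) + \epsilon_{jk}\partial^kF_{12}$ (up to index sign conventions, to be checked), to $i\Im(\overline\phi\bfD_j\phi)\phi$, because the curl terms cancel since $\partial^kF_{jk} = \epsilon_{jk}\partial^k F_{12}$. Combined with $-\Re(\overline\phi\bfD_j\phi)\phi$ from (iii), this gives
\[
-\bigl(\Re - i\Im\bigr)(\overline\phi\bfD_j\phi)\,\phi = -\phi\,\overline{\overline\phi\bfD_j\phi} = -|\phi|^2\bfD_j\phi.
\]
This produces the mass term $|\phi|^2\bfD_j\phi$ on the left. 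What remains is $\tfrac12(1-|\phi|^2)\bfD_j\phi - 2iF_{12}\epsilon_{j\ell}\bfD^\ell\phi$, which is \eqref{Dphi-parabolic}.

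I expect the main obstacle to be tracking signs and index conventions, in particular the sign of $\epsilon_{jk}$ in Ampère's law versus $\partial^kF_{jk}$, so that the cancellation in the last step works out exactly. I would verify it componentwise for $j=1$ and then appeal to the symmetric structure for $j=2$.
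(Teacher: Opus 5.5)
Your route is the paper's: Bianchi plus Amp\`ere for $F_{12}$ (reusing the vorticity computation is fine), the covariant product rule for the charge density, and for $\bfD_j\phi$ the commutator computation. The paper packages that last step as $[\bfD_j, \bfD_t - \bfD^\ell\bfD_\ell - \tfrac12(1-|\phi|^2)] = \overline\phi\bfD_j\phi + 2iF_{12}\epsilon_{j\ell}\bfD^\ell$. Your first two identities are derived correctly. In the third, however, the zeroth-order recombination is wrong as written. This is the step you yourself call the main point, and you reach the correct final equation only because two errors cancel.

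\textbf{The sign in Amp\`ere's law.} Amp\`ere's law in \eqref{AHG} reads $F_{tj} = \Im(\overline\phi\bfD_j\phi) - \epsilon_{jk}\partial^k F_{12} = \Im(\overline\phi\bfD_j\phi) - \partial^k F_{jk}$; for $j=1$ this is $F_{t1} = \Im(\overline\phi\bfD_1\phi) - \partial_2F_{12}$. With the $+$ sign you wrote, the combination $-iF_{jt}\phi + i\partial^kF_{jk}\phi = iF_{tj}\phi + i\partial^kF_{jk}\phi$ would contain $2i\partial^kF_{jk}\phi$, so the curl terms would not cancel. With the correct sign it does reduce to $i\Im(\overline\phi\bfD_j\phi)\phi$.

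\textbf{Mixing the two sides of the equation.} The term $i\Im(\overline\phi\bfD_j\phi)\phi$ lives on the left-hand side of the differentiated equation, since it comes from $\bfD_j\bfD_t\phi - \bfD_j\bfD^k\bfD_k\phi$. The term $-\Re(\overline\phi\bfD_j\phi)\phi$ from (iii) lives on the right. You added them as if they were on the same side, which produces $-(\Re - i\Im)(\overline\phi\bfD_j\phi)\,\phi = -\phi\,\overline{\overline\phi\bfD_j\phi} = -\phi^2\,\overline{\bfD_j\phi}$. That is not $-|\phi|^2\bfD_j\phi$; it is conjugate-linear rather than complex-linear in $\bfD_j\phi$.

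The correct bookkeeping brings $+\Re(\overline\phi\bfD_j\phi)\phi$ to the left. There it combines with $i\Im(\overline\phi\bfD_j\phi)\phi$ into $(\Re + i\Im)(\overline\phi\bfD_j\phi)\,\phi = (\overline\phi\bfD_j\phi)\,\phi = |\phi|^2\bfD_j\phi$, which is exactly the mass term. This is the paper's observation that the zeroth-order part of the commutator is $\Re(\overline\phi\bfD_j\phi) + i\Im(\overline\phi\bfD_j\phi) = \overline\phi\bfD_j\phi$. With these two corrections, and the first-order term $2iF_{12}\epsilon_{j\ell}\bfD^\ell\phi$ moved to the right as you did, the derivation is complete.
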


\begin{proof}[Derivation of magnetic field equation \eqref{F12-parabolic}]
    Using the Bianchi identity \eqref{bianchi} and Ampere's law from \eqref{AHG}, the magnetic field obeys the equation
        \begin{align*}
            \partial_t F_{12} 
                &= \epsilon\indices{_j^k} \partial^j F_{tk} \\
                &= \epsilon\indices{_j^k} \bigl( \Im(\overline{\bfD^j \phi} \bfD_k \phi) + \Im(\overline \phi \bfD^j \bfD_k \phi) + \partial^j \partial^\ell F_{\ell k} \bigr) \\
                &= 2 \Im(\overline{\bfD_1 \phi} \bfD_2 \phi) + (\Delta - |\phi|^2) F_{12}, 
        \end{align*}
    as desired. 
\end{proof}

\begin{proof}[Derivation of charge density equation \eqref{charge-parabolic}]
Using the magnetic heat equation for the scalar field in \eqref{AHG}, the charge density obeys the equation
        \begin{align*}
            \partial_t \tfrac12(1 - |\phi|^2) 
                &= -\Re(\overline \phi \bfD_t \phi) \\
                &= -\Re\bigl(\overline \phi \bfD_j \bfD^j \phi\bigr) - |\phi|^2 \tfrac12(1 - |\phi|^2) \\
                &= \Re(\overline{\bfD^j \phi} \bfD_j \phi) + (\Delta - |\phi|^2) \tfrac12(1 - |\phi|^2) ,
        \end{align*}
    as desired. 
\end{proof}

\begin{proof}[Derivation of differentiated scalar field equation \eqref{Dphi-parabolic}]
    Using Ampere's law from \eqref{AHG} and the commutator \eqref{laplace-commute}, we have
        \begin{align*}
            \left[ \bfD_j, \bfD_t - \bfD^\ell \bfD_\ell - \tfrac12(1 - |\phi|^2) \right]
                &= i F_{tj} + 2 i F_{j\ell} \bfD^\ell + \Bigl( i \partial^\ell F_{j\ell} + \Re(\overline \phi \bfD_j \phi) \Bigr)\\
                &= i \Bigl( \Im(\overline \phi \bfD_j \phi) - \partial^\ell F_{j \ell} \Bigr) + 2i F_{12} \epsilon_{j \ell} \bfD^\ell + \Bigl( i \partial^\ell F_{j\ell} +  \Re(\overline \phi \bfD_j \phi)\Bigr) \\
                &= \overline \phi \bfD_j \phi +2 i F_{12} \epsilon_{j \ell} \bfD^\ell.
        \end{align*}
    Then the equation follows from differentiating the scalar field equation in \eqref{AHG} and applying the above commutator identity.
\end{proof}

We obtain the equations of motion for the Bogomoln'yi tension field by subtracting and adding the equations \eqref{F12-parabolic}-\eqref{Dphi-parabolic} appropriately. Miraculously, terms collect in such a way that the equations manifest as a closed system of massive parabolic equations with quadratic non-linearity,

\begin{proposition}[Equations for Bogomoln'yi tension fields]
    Let $(A, \phi)$ be a configuration on $[0, \infty) \times \R^2$ solving the abelian Higgs gradient flow \eqref{AHG}. Then the Bogomoln'yi tension field satisfies the system of massive parabolic equations
        \begin{align}
            \Bigl( \bfD_t - \bfD^j \bfD_j + \tfrac12(1 + |\phi|^2) \Bigr) \bfD_{\overline z} \phi 
                &= \Gauss \cdot 2 \bfD_{\overline z} \phi, \label{eq:Dbar-heat} \\
            \bigl(\partial_t - \Delta + |\phi|^2\bigr) \Gauss \label{eq:Gauss-heat}
                &= |2 \bfD_{\overline z} \phi|^2.
        \end{align}
\end{proposition}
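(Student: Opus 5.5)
The plan is to derive both equations by taking linear combinations of the three equations of motion \eqref{F12-parabolic}, \eqref{charge-parabolic}, \eqref{Dphi-parabolic}, with no further analytic input.

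\textbf{The Gauss equation \eqref{Gauss-heat}.} Since $\Gauss = \tfrac12(1-|\phi|^2) - F_{12}$, I would subtract \eqref{F12-parabolic} from \eqref{charge-parabolic}. This gives
\[
(\partial_t - \Delta + |\phi|^2)\Gauss = \overline{\bfD^j\phi}\bfD_j\phi - 2\Im(\overline{\bfD_1\phi}\bfD_2\phi),
\]
and by the expansion of $|2\bfD_{\overline z}\phi|^2$ recorded in the proof of Proposition \ref{prop:energyidentity}, the right-hand side is exactly $|2\bfD_{\overline z}\phi|^2$.

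\textbf{The $\bfD_{\overline z}\phi$ equation \eqref{Dbar-heat}.} I would form $\tfrac12(\text{eq}_1 + i\,\text{eq}_2)$ from \eqref{Dphi-parabolic}; the operator on the left is linear in $j$, so this is legitimate. Move $\tfrac12(1-|\phi|^2)\bfD_{\overline z}\phi$ to the left. The potential then becomes
\[
|\phi|^2 - \tfrac12(1-|\phi|^2) = \tfrac12(1+|\phi|^2) - (1-|\phi|^2).
\]
Next, compute the curvature term. Since $\epsilon_{j\ell}\bfD^\ell$ sends $\bfD_1 \mapsto \bfD_2$ and $\bfD_2 \mapsto -\bfD_1$, we get
\[
\tfrac12\bigl(-2iF_{12}\bfD_2\phi + i\cdot 2iF_{12}\bfD_1\phi\bigr) = -F_{12}(\bfD_1+i\bfD_2)\phi = -2F_{12}\bfD_{\overline z}\phi.
\]
Combining, the right-hand side is
\[
(1-|\phi|^2)\bfD_{\overline z}\phi - 2F_{12}\bfD_{\overline z}\phi = 2\bigl(\tfrac12(1-|\phi|^2) - F_{12}\bigr)\bfD_{\overline z}\phi = \Gauss\cdot 2\bfD_{\overline z}\phi,
\]
as claimed.

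\textbf{Main obstacle.} The only delicate point is sign bookkeeping: the orientation convention for $\epsilon_{j\ell}\bfD^\ell$ and the sign of the commutator term in \eqref{Dphi-parabolic}. If the signs come out with $+2F_{12}$ instead, the combination would produce $\tfrac12(1-|\phi|^2)+F_{12}$, which is the anti-self-dual structure. So I would double-check this against the commutator computation in the derivation of \eqref{Dphi-parabolic}, in particular that $2iF_{j\ell}\bfD^\ell = 2iF_{12}\epsilon_{j\ell}\bfD^\ell$. The derivation is formal at the level of smooth solutions; regularity is supplied by the well-posedness theory in the appendix.
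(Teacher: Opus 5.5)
Your proposal is correct and follows the paper's proof essentially verbatim. The Gauss equation comes from subtracting \eqref{F12-parabolic} from \eqref{charge-parabolic} and using $|2\bfD_{\overline z}\phi|^2 = \overline{\bfD^j\phi}\bfD_j\phi - 2\Im(\overline{\bfD_1\phi}\bfD_2\phi)$, and the $\bfD_{\overline z}\phi$ equation comes from the combination $\tfrac12(\text{eq}_1 + i\,\text{eq}_2)$ of \eqref{Dphi-parabolic}, where your curvature term $-2F_{12}\bfD_{\overline z}\phi$ (via $F_{j\ell} = F_{12}\epsilon_{j\ell}$) agrees with the paper's, so no anti-self-dual term appears.
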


\begin{proof}
    The Gauss tension field equation \eqref{Gauss-heat} is immediate from subtracting the magnetic field equation \eqref{F12-parabolic} from the charge density equation \eqref{charge-parabolic}, and observing the identity 
        \[
            |2 \bfD_{\overline z} \phi|^2
                = \overline{\bfD^j \phi} \bfD_j \phi - 2 \Im(\overline{\bfD_1 \phi} \bfD_2 \phi).
        \]
    For the differentiated Higgs field equation \eqref{Dbar-heat}, we add the equations \eqref{Dphi-parabolic} appropriately,  
        \begin{align*}
            \bigl( \bfD_t - \bfD^j \bfD_j + |\phi|^2 \bigr) \bfD_{\overline z} \phi 
                &= \tfrac12(1 - |\phi|^2) \bfD_{\overline z} \phi - 2 F_{12} \bfD_{\overline z} \phi \\
                &=  \tfrac12(|\phi|^2 - 1) \bfD_{\overline z} \phi + \Gauss \cdot 2 \bfD_{\overline z} \phi.
        \end{align*}
    Rearranging gives the result. 
\end{proof}

To transfer bounds for the Bogomoln'yi tension field to bounds for the configuration, we can recast the abelian Higgs gradient flow in self-dual form. That is, we rewrite the equation \eqref{AHG} in terms of the Bogomoln'yi tension field,

\begin{proposition}[Abelian Higgs gradient flow in self-dual form]
    Let $(A, \phi)$ be a configuration on $[0, \infty) \times \R^2$ solving the abelian Higgs gradient flow \eqref{AHG}. Then the equation may be expressed in self-dual form,
        \begin{align}
            \bfD_t \phi 
                &= 4 \bfD_z \bfD_{\overline z} \phi + \phi \Gauss, \label{eq:SD-AHG}\\
            F_{t \overline z}
                &= - i  \partial_{\overline z} \Gauss - i \overline \phi \bfD_{\overline z} \phi. \label{eq:SD-Amp}
        \end{align}
\end{proposition}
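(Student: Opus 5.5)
The identity is algebraic, so the plan is to expand both right-hand sides using the commutator identity \eqref{commute} and the covariant product rule \eqref{product}, and then compare with \eqref{AHG} directly. No analysis is involved. The only care needed is the factors of $\tfrac12$ and $i$ in the Cauchy--Riemann operators, and checking that the real parts cancel in the Amp\`ere equation.

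For \eqref{SD-AHG}, I first compute the operator $4\bfD_z\bfD_{\overline z} = (\bfD_1 - i\bfD_2)(\bfD_1 + i\bfD_2)$. Expanding gives
\[
\bfD_1\bfD_1 + \bfD_2\bfD_2 + i[\bfD_1,\bfD_2] = \bfD^j\bfD_j + F_{12},
\]
where the last step uses \eqref{commute} in the form $[\bfD_1,\bfD_2] = -iF_{12}$. Adding $\phi\,\Gauss = \tfrac12(1-|\phi|^2)\phi - F_{12}\phi$, the two $F_{12}\phi$ terms cancel. What remains is $\bfD^j\bfD_j\phi + \tfrac12(1-|\phi|^2)\phi$, which equals $\bfD_t\phi$ by the first equation of \eqref{AHG}.

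For \eqref{SD-Amp}, I write $F_{t\overline z} = \tfrac12(F_{t1} + iF_{t2})$ and substitute Amp\`ere's law from \eqref{AHG}. This gives
\[
F_{t\overline z} = \tfrac12\Bigl(\Im(\overline\phi\bfD_1\phi) + i\Im(\overline\phi\bfD_2\phi)\Bigr) + \tfrac12\bigl(-\partial_2F_{12} + i\partial_1F_{12}\bigr).
\]
On the other side, the product rule \eqref{product} gives $\partial_j\tfrac12(1-|\phi|^2) = -\Re(\overline\phi\bfD_j\phi)$. Hence
\[
-i\partial_{\overline z}\Gauss = \tfrac{i}{2}\Re(\overline\phi\bfD_1\phi) - \tfrac12\Re(\overline\phi\bfD_2\phi) + \tfrac{i}{2}\partial_1F_{12} - \tfrac12\partial_2F_{12}.
\]
Writing $\overline\phi\bfD_j\phi$ in real and imaginary parts, I also get
\[
-i\overline\phi\bfD_{\overline z}\phi = \tfrac12\Im(\overline\phi\bfD_1\phi) - \tfrac{i}{2}\Re(\overline\phi\bfD_1\phi) + \tfrac12\Re(\overline\phi\bfD_2\phi) + \tfrac{i}{2}\Im(\overline\phi\bfD_2\phi).
\]
Summing the last two expressions, the $\Re(\overline\phi\bfD_j\phi)$ terms cancel in pairs, and the result matches the expression for $F_{t\overline z}$ term by term.

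The main (minor) obstacle is this bookkeeping of real and imaginary parts in \eqref{SD-Amp}. The cancellation of the $\Re(\overline\phi\bfD_j\phi)$ terms is exactly where self-duality enters, so I would verify it carefully rather than by pattern-matching. For \eqref{SD-AHG}, the only point of care is the sign convention in \eqref{commute}, since that sign is what makes the $F_{12}\phi$ terms cancel.
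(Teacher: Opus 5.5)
Your proposal is correct and is essentially the paper's own proof. For \eqref{SD-AHG} the paper uses the same factorisation $4\bfD_z\bfD_{\overline z} = \bfD^j\bfD_j + i[\bfD_1,\bfD_2] = \bfD^j\bfD_j + F_{12}$. For \eqref{SD-Amp} it substitutes Amp\`ere's law into $2F_{t\overline z} = F_{t1} + iF_{t2}$ and uses $\partial_j\tfrac12(1-|\phi|^2) = -\Re(\overline\phi\bfD_j\phi)$; the only difference is that it regroups the left-hand side into $\Gauss$ form rather than expanding the right-hand side as you do.

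One small remark on your commentary: the self-dual coupling $\lambda = 1$ enters only in \eqref{SD-AHG}, where $\tfrac\lambda2(1-|\phi|^2)\phi$ must match $\phi\Gauss$. Amp\`ere's law contains no $\lambda$, so the cancellation of the $\Re(\overline\phi\bfD_j\phi)$ terms in \eqref{SD-Amp} reflects only the normalisation of $\Gauss$, and that identity holds for any coupling.
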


\begin{proof}[Derivation of Higgs field equation \eqref{SD-AHG}]
    We compute 
        \begin{align*}
        4 \bfD_z \bfD_{\overline z} 
            &= (\bfD_1 - i \bfD_2)(\bfD_1 + i \bfD_2) \\
            &= \bfD^j \bfD_j + i[\bfD_1, \bfD_2] \\
            &= \bfD^j \bfD_j + F_{12},
    \end{align*}
    Inserting this into the gradient flow equation \eqref{AHG} and collecting the charge density and magnetic field into the Gauss tension field gives
        \begin{align*}
            \bfD_t \phi 
                &= \Bigl( \bfD^j \bfD_j + \tfrac12(1 - |\phi|^2) \Bigr) \phi \\
                &= \Bigl( 4 \bfD_z \bfD_{\overline z} - F_{12} + \tfrac12(1 - |\phi|^2) \Bigr) \phi \\
                &= 4 \bfD_z \bfD_{\overline z} \phi + \phi \Gauss,
        \end{align*}
    as desired. 
\end{proof}

\begin{proof}[Derivation of Ampere's law \eqref{SD-Amp}]
    Using the Ampere's law \eqref{AHG} in $(x^1, x^2)$ coordinates, we compute 
        \begin{align*}
            2F_{t \overline z} 
                &= F_{t1} + i F_{t2} \\
                &= \Bigl(\Im(\overline \phi \bfD_1 \phi) + \Re(\overline \phi \bfD_2 \phi) + \partial_2 \Gauss \Bigr) + i \Bigl(\Im(\overline \phi \bfD_2 \phi) - \Re(\overline \phi \bfD_1 \phi) - \partial_1 \Gauss \Bigr) \\
                &= - i \Bigl(i \overline \phi \bfD_2 \phi + \overline \phi \bfD_1 \phi \Bigr) - i \Bigl( i \partial_2 \Gauss + \partial_1 \Gauss \Bigr)\\
                &= - 2 i \overline \phi \bfD_{\overline z} \phi - 2 i \partial_{\overline z} \Gauss.
        \end{align*}
    as desired. 
\end{proof}

\section{Covariant smoothing estimates}\label{sec:smooth}

We can collect the system of covariant parabolic equations \eqref{F12-parabolic}-\eqref{Dphi-parabolic} into the schematic form
    \begin{equation}\label{eq:schematic-parabolic-I}
        \bigl( \sfD_t - \sfD^j \sfD_j + 1 \bigr) \mathsf U 
            = \mathsf U \cdot \mathsf U,
    \end{equation}
where $\sfD = \nabla$ if applied to $\mathsf U \in \{ F_{12}, \tfrac12(1 - |\phi|^2) \}$, and $\sfD = \bfD$ if applied to $\mathsf U = \bfD \phi$. Similarly, the covariant parabolic equations for the Bogomoln'yi tension field \eqref{Dbar-heat}-\eqref{Gauss-heat} take the form
    \begin{equation}\label{eq:schematic-parabolic-II}
        \bigl( \sfD_t - \sfD^j \sfD_j + 1 \bigr) \mathsf B
            = \mathsf U \cdot \mathsf B 
    \end{equation}
where $\sfD = \nabla$ if applied to $\mathsf B = \Gauss$, and $\sfD = \bfD$ if applied to $\mathsf B = \bfD_{\overline z} \phi$. 

To facilitate some of the technical details in Section \ref{sec:converge}, we leverage the parabolic smoothing effects of the abelian Higgs gradient flow on unit-time scales. In summary, we regard the non-linear interactions as perturbative and show that the analogues of the parabolic smoothing estimates for the linear heat equation continue to hold for the parabolic equations \eqref{schematic-parabolic-I} and \eqref{schematic-parabolic-II} on the time-scale $t_* \equiv t_* (\cE)$, where we abbreviate the energy of the initial data by 
    \[
        \cE 
            := \cE[A^{\mathrm{in}}, \phi^{\mathrm{in}}],
    \]
and set 
    \[
        t_* 
            := c(1 + \cE)^{-2}, \qquad c \ll 1. 
    \]

Applying the energy method to \eqref{schematic-parabolic-I} gives

\begin{proposition}[Basic energy estimate]\label{prop:smooth-I}
    Let $(A, \phi)$ be a finite-energy configuration on $[0, \infty) \times \R^2$ solving the abelian Higgs gradient flow \eqref{AHG}. Then the following estimate holds for all $t_0 \geq 0$, 
        \begin{align}
            \bigl\| \mathsf U(t_0 + t) \bigr\|_{L^\infty_t L^2_x ([0, t_*])}^2 + \bigl\| \sfD \mathsf U (t_0 + t)\bigr\|_{L^2_{t, x} ([0, t_*])}^2
                &\lesssim \cE,\label{eq:smoothing} 
        \end{align}
    where $\sfD = \nabla$ if applied to $\mathsf U \in \{ F_{12}, \tfrac12(1 - |\phi|^2) \}$, and $\sfD = \bfD$ if applied to $\mathsf U = \bfD \phi$. 
\end{proposition}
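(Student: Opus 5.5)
We apply the energy method to the schematic system \eqref{schematic-parabolic-I}, treating the quadratic nonlinearity perturbatively on the short time-scale $t_* = c(1+\cE)^{-2}$. By time-translation invariance of \eqref{AHG}, and since the energy identity gives $\cE[A(t_0),\phi(t_0)] \leq \cE$, it suffices to take $t_0 = 0$ with data of energy at most $\cE$. At time zero we have $\|\mathsf U(0)\|_{L^2_x}^2 \lesssim \cE$ for each of $\mathsf U \in \{F_{12}, \tfrac12(1-|\phi|^2), \bfD\phi\}$, directly from the definition of the energy.

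\emph{Energy identity.} We pair each equation \eqref{F12-parabolic}--\eqref{Dphi-parabolic} with $\overline{\mathsf U}$, take real parts, and integrate over $\R^2$. The covariant product rule \eqref{product} gives $\Re(\overline{\mathsf U}\,\sfD_t \mathsf U) = \tfrac12\partial_t|\mathsf U|^2$. Integrating by parts covariantly gives $-\Re\int \overline{\mathsf U}\,\sfD^j\sfD_j\mathsf U = \int |\sfD\mathsf U|^2$. The potential terms $|\phi|^2|\mathsf U|^2$ are nonnegative and may be discarded. Writing $X(t) := \sum_{\mathsf U}\|\mathsf U(t)\|_{L^2_x}^2$ and $Y(t) := \sum_{\mathsf U}\|\sfD\mathsf U(t)\|_{L^2_x}^2$, we obtain
\[
\tfrac12 X'(t) + Y(t) \lesssim \int_{\R^2} |\mathsf U|^3 \, dx,
\]
since the right-hand sides are all of the schematic form $\mathsf U\cdot\mathsf U$ (for instance $\tfrac12(1-|\phi|^2)\bfD\phi$ and $F_{12}\bfD\phi$). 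The one exception is the term $-2iF_{12}\epsilon_{j\ell}\bfD^\ell\phi$ in \eqref{Dphi-parabolic}, which is also quadratic in $\mathsf U$ and is handled the same way. Justification of the integrations by parts at the regularity of the solution is deferred to the well-posedness theory in the appendix, via approximation by smooth solutions.

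\emph{Controlling the cubic term.} This is the main step. We use H\"older and the Ladyzhenskaya inequality, applied in its non-magnetic form to $F_{12}$ and $1-|\phi|^2$, and to $|\bfD\phi|$ via the diamagnetic inequality \eqref{diamagnetic} (or equivalently \eqref{GN} with $p=3$ or $p=4$). This gives
\[
\|\mathsf U\|_{L^3_x}^3 \lesssim \|\mathsf U\|_{L^2_x}^2\,\|\sfD\mathsf U\|_{L^2_x},
\]
hence $\int |\mathsf U|^3 \lesssim X\,Y^{1/2} \leq \tfrac12 Y + C X^2$. Absorbing $Y$ yields the differential inequality $X' + Y \lesssim X^2$.

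\emph{Bootstrap.} Suppose $X \leq 4C_0\cE$ on $[0,t]$ with $t \leq t_*$, where $X(0) \leq C_0\cE$. Integrating the inequality gives
\[
X(t) + \int_0^t Y \leq C_0\cE + C t\,(4C_0\cE)^2 \leq C_0\cE + C' c\,\cE^2(1+\cE)^{-2} \leq 2C_0\cE
\]
once $c$ is small. The bootstrap therefore closes by continuity, which yields \eqref{smoothing}. The only delicate points are the covariant integration by parts, where the boundary behaviour is justified by approximation, and the observation that the cubic estimate with exponent $X^2$ fixes exactly the time-scale $t_* \sim (1+\cE)^{-2}$. Equivalently, one can integrate $X' \lesssim X^2$ directly, which gives no blow-up before time $\sim X(0)^{-1}$; this is consistent with, and in fact longer than, $t_*$.
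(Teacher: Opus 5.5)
Your proposal is correct and follows essentially the paper's argument. You pair with $\mathsf U$, discard the nonnegative mass term, bound the cubic term by $\|\mathsf U\|_{L^2_x}^2\|\sfD\mathsf U\|_{L^2_x}$ via \eqref{GN}, and close by continuity on the short time-scale; the paper integrates in time and applies Cauchy--Schwarz where you apply Young's inequality pointwise, which is equivalent. One small correction to your closing remark: this step only needs $t_* \ll \cE^{-1}$, as the paper notes, and the stricter scale $(1+\cE)^{-2}$ is forced by the cubic terms in the higher-order estimates of Proposition~\ref{prop:smooth}, not by this estimate.
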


\begin{proof}
    We multiply the equation \eqref{schematic-parabolic-I} by $\mathsf U$ and integrate-by-parts on $\R^2$. The mass term has favourable sign, so we can discard it, while on the right-hand side we use the Gagliardo-Nirenberg interpolation inequality \eqref{GN}, 
        \begin{align*}
            \frac{d}{dt} \tfrac12 \| \mathsf U \|_{L^2_x}^2 + \| \sfD \mathsf U \|_{L^2_x}^2 
                &\lesssim \Bigl|\bigl\langle \mathsf U, \mathsf U \cdot \mathsf U \bigr\rangle \Bigr|\\
                &\lesssim \| \mathsf U \|_{L^2_x}^2 \| \sfD \mathsf U \|_{L^2_x}.
        \end{align*}
    Integrating in time starting from $t = t_0$ up to $t = t_*$, then using Cauchy-Schwarz and energy monotonicity, 
        \begin{align*}
            \| \mathsf U \|_{L^\infty_t L^2_x}^2 + \| \sfD \mathsf U \|_{L^2_{t, x}}^2 
                &\lesssim \cE + |t_*|^{\frac12} \| \mathsf U\|_{L^\infty_t L^2_x}^2 \| \sfD \mathsf U \|_{L^2_{t, x}} .
        \end{align*}
    By a continuity argument, we can absorb the second term on the right-hand side into the left-hand side provided that $t_* \ll \cE^{-1}$, as desired. 
\end{proof}

As a useful corollary, we may deduce a unit-time scale $L^p_t L^\infty_x$-bound on the scalar field. The energy-class only provides $\dot H^1$-type control on $\phi$, so for fixed $t$ we cannot say that finite-energy configurations have bounded scalar field. Nonetheless, leveraging a little bit of averaging in $t$,

\begin{corollary}[$L^p_t L^\infty_x$-bound for $\phi$]\label{cor:Lp-Linfty}
    Let $(A, \phi)$ be a finite-energy configuration on $[0, \infty) \times \R^2$ solving the abelian Higgs gradient flow \eqref{AHG}. Then for $1 \leq p \leq 6$, the following bound holds for all $t_0 \geq 0$, 
        \begin{align} \label{eq:Lp-Linfty}
            \| \phi (t_0 + t) \|_{L^p_t L^\infty_x ([0, t_*])}
                \lesssim \bigl( 1 + \cE \bigr)^{-\frac2p}.
        \end{align}
\end{corollary}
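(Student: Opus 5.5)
The plan is to combine the Sobolev inequality \eqref{inefficientLinfty} with the space-time dissipation bound of Proposition \ref{prop:smooth-I}, and then apply H\"older's inequality on the window $[t_0, t_0 + t_*]$. The power of $(1 + \cE)$ on the right of \eqref{Lp-Linfty} should come entirely from the length $t_* = c(1+\cE)^{-2}$: we have $t_*^{1/p} \sim (1+\cE)^{-2/p}$. So the goal is to show that $\phi$ is bounded in $L^\infty_x$ by an absolute constant after averaging in time over the window.

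\emph{Step 1 (pointwise-in-time bound).} Apply \eqref{inefficientLinfty} at each time $t_0 + t$. The charge density satisfies $\|1 - |\phi|^2\|_{L^2_x} \lesssim \cE^{1/2}$ uniformly in time, by energy monotonicity. This gives
\[
	\|\phi(t_0+t)\|_{L^\infty_x}^6 \lesssim 1 + \cE \, \bigl\| \bfD^{(2)} \phi(t_0+t) \bigr\|_{L^2_x}^2 .
\]
(Strictly, \eqref{inefficientLinfty} requires $\phi \in L^\infty_x$ a priori. I would justify this by the qualitative regularity from the well-posedness theory of Appendix \ref{sec:gwp}, or else run the argument on truncations.)

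\emph{Step 2 (time integration).} Apply Proposition \ref{prop:smooth-I} with $\mathsf U = \bfD \phi$ to get $\|\bfD^{(2)} \phi\|_{L^2_{t,x}([t_0, t_0 + t_*])}^2 \lesssim \cE$. Integrating the Step 1 bound over the window then yields $\|\phi\|_{L^6_t L^\infty_x}^6 \lesssim t_* + \cE^2$, and H\"older's inequality handles $p < 6$.

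\emph{Step 3 (main obstacle).} Step 2 alone gives an $L^6_t L^\infty_x$ bound of size $O(\cE^{1/3})$ on a window of length $t_*$. This is weaker than \eqref{Lp-Linfty}, which asks for $\|\phi\|_{L^\infty_x} = O(1)$ on average. The crude bound loses because it does not exploit that the window is short. To gain the factor $t_*$, I would first try a maximum-principle argument. By \eqref{charge-parabolic}, $w := (|\phi|^2 - 1)_+$ satisfies $(\partial_t - \Delta) w \leq -|\phi|^2 w \leq 0$ in the distributional sense. Hence $w(t_0 + t) \leq e^{(t + s)\Delta} w(t_0 - s)$ for any earlier time $t_0 - s \geq 0$. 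Since $\|w\|_{L^2_x} \lesssim \cE^{1/2}$, this gives
\[
	\|\phi(t)\|_{L^\infty_x}^2 \leq 1 + C \cE^{1/2} \tau^{-1/2},
\]
where $\tau$ is the time elapsed since $0$.

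For windows with $t_0 \gtrsim 1$ this gives $\|\phi\|_{L^\infty_x} \lesssim 1 + \cE^{1/4}$ uniformly, and $(1+\cE)^{1/4}$ times $t_*^{1/p}$ is acceptable up to adjusting $c$ in $t_*$. For the initial window I would interpolate between the decaying maximum-principle bound, whose $L^p_t$ integrability $t^{-p/4}$ is good for $p < 4$, and the Step 2 bound, which controls the endpoint $p = 6$. I expect the bookkeeping of the precise power of $(1+\cE)$ in the regime $4 \leq p \leq 6$ to be the delicate point. If this does not close, I would fall back to shrinking $t_*$ further so that the absorption in Proposition \ref{prop:smooth-I} also yields a smallness factor on $\|\bfD^{(2)}\phi\|_{L^2_{t,x}}$.
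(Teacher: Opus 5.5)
Your Steps 1--2 are exactly the paper's proof: the bound \eqref{inefficientLinfty}, H\"older in time with $\tfrac1p=\tfrac16+\tfrac1q$, and $\|\bfD^{(2)}\phi\|_{L^2_{t,x}}^2\lesssim\cE$ from \eqref{smoothing}. Your Step 3 diagnosis is correct, and it exposes an error in that proof. Its last line asserts $|t_*|^{1/q}\cE^{1/6}\|\bfD^{(2)}\phi\|_{L^2_{t,x}}^{1/3}\lesssim(1+\cE)^{-2/p}$, whereas this term is only $\lesssim|t_*|^{1/q}\cE^{1/3}\lesssim(1+\cE)^{2/3-2/p}$.

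Your repairs do not close the gap, however. A factor $(1+\cE)^{1/4}$ cannot be absorbed by adjusting the absolute constant $c$ in $t_*=c(1+\cE)^{-2}$. On the first window, the maximum-principle bound $\|\phi(t)\|_{L^\infty_x}^2\le1+C\cE^{1/2}t^{-1/2}$ yields, even for $p<4$, only $t_*^{1/p}\bigl(1+(\cE/t_*)^{1/4}\bigr)\sim(1+\cE)^{3/4-2/p}$. Shrinking $t_*$ does not make $\|\bfD^{(2)}\phi\|_{L^2_{t,x}([0,t_*])}^2$ small either: for data concentrated at scale $t_*^{1/2}$ it is of size $\sim\cE$, far from the size $\cE^{-1}(1+\cE)^{-2}$ that Step 2 would require. (The maximum principle does give the stated bound on windows with $t_0\gtrsim1+\cE$, where $\cE^{1/2}t_0^{-1/2}\lesssim1$.)

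More fundamentally, no argument can give \eqref{Lp-Linfty} with an absolute constant near $t_0=0$, because it is false for large energy. Take $A^{\mathrm{in}}=0$ and $\phi^{\mathrm{in}}(x)=1+(M-1)\eta(M^2x)$ with $\eta\in C^\infty_c(\R^2)$, $0\le\eta\le1$, $\eta\equiv1$ on the unit ball. The gradient energy is scale-invariant and of size $\sim M^2$, while the potential energy is $O(1)$. So $\cE\sim M^2$ and $t_*\lesssim cM^{-4}$.

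Since $F_{12}$ and $\Im(\overline\phi\bfD\phi)$ vanish when $A=0$ and $\phi$ is real, the flow in temporal gauge keeps $A\equiv0$ and $\phi$ real. Then $\phi$ solves $\partial_t\phi=\Delta\phi+\tfrac12(1-\phi^2)\phi$, so $1\le\phi\le M$ by comparison. Duhamel's formula gives, for $t\in[0,t_*]$,
\[
	\phi(t,0)\ \ge\ (e^{t\Delta}\phi^{\mathrm{in}})(0)-\tfrac12M^3t\ \ge\ 1+(M-1)\bigl(1-e^{-M^{-4}/(4t)}\bigr)-\tfrac12M^3t_*\ \ge\ \tfrac M2,
\]
because $t_*$ is a small multiple of the squared bump radius $M^{-4}$. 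Hence $\|\phi\|_{L^p_tL^\infty_x([0,t_*])}\gtrsim Mt_*^{1/p}\sim\cE^{1/2}(1+\cE)^{-2/p}$, contradicting \eqref{Lp-Linfty} as $M\to\infty$.

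What your Steps 1--2 (equivalently, the paper's computation) actually prove is
\[
	\|\phi(t_0+t)\|_{L^p_tL^\infty_x([0,t_*])}\lesssim(1+\cE)^{\frac23-\frac2p},
\]
in particular $\lesssim_\cE 1$. That is the statement you should aim for. It is enough for how the corollary is used later, e.g.\ in Section~\ref{sec:converge}, where $\cE\le\pi N+\epsilon_*$ and constants may depend on $N$.
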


\begin{proof}
    We have
        \begin{align*}
            \| \phi \|_{L^p_t L^\infty_x} 
                &\lesssim  \Bigl\| 1 + \bigl\| 1 - |\phi|^2 \bigr\|_{L^2_x}^{\frac13} \bigl\| \bfD^{(2)} \phi \bigr\|_{L^2_x}^{\frac13} \Bigr\|_{L^p_t} \\
                &\lesssim |t_*|^{\frac1p} + |t_*|^\frac1q \cE^{\frac16} \bigl\| \bfD^{(2)} \phi \bigr\|_{L^2_{t, x}}^\frac13 \\
                &\lesssim (1 + \cE)^{-\frac2p} ,
        \end{align*}
    where in the first line, we estimate the $L^\infty_x$-norm of the scalar field using the Sobolev-type embedding \eqref{inefficientLinfty}, in the second line we apply H\"older's inequality in time with $\tfrac1p = \tfrac16 + \tfrac1q$, and in the third line we use the parabolic smoothing estimate \eqref{smoothing}. 
\end{proof}

\begin{remark}
    The Sobolev embedding $\dot H^1 (\R^2) \not\hookrightarrow L^\infty (\R^2)$ only fails by a logarithm, so, working a little harder, one can prove a uniform $L^p_t L^\infty_x$-bound for the scalar field on unit time-scales for any finite $p$ by replacing the inefficient bound \eqref{inefficientLinfty} with a suitable logarithmic refinement of the Sobolev embedding. 
\end{remark}

With the basic energy estimate \eqref{smoothing} at hand, we may proceed inductively to prove higher-order smoothing estimates. The basis of our argument is the following weighted energy estimate,

\begin{lemma}[Abstract parabolic energy estimates]
    Let $\mathsf N : [t_1, t_2] \times \R^2 \to \C$ be a forcing term, and suppose that $\mathsf U : [t_1, t_2] \times \R^2 \to \C$ solves the parabolic equation
        \begin{equation}\label{eq:abstract-heat-eq}
            \big(\sfD_t - \sfD^j \sfD_j + 1 \big) \, \mathsf U
                = \mathsf N,
        \end{equation}
    where $\sfD = \nabla$ or $\bfD$. Then the following energy estimate holds for all exponents $\gamma \in \R$,
        \begin{equation}
             \big\| t^\gamma \mathsf U \big\|_{L^\infty_t L^2_x} + \big\| t^\gamma \sfD \mathsf U \big\|_{L^2_{t, x}}
                \lesssim \big\|t_1^\gamma \mathsf U (t_1)\big\|_{L^2_x} + \gamma \big\|t^{\gamma - \frac12} \mathsf U\big\|_{L^2_{t, x}} + \big\| t^{\gamma} \mathsf N \big\|_{L^1_t L^2_x}. \label{eq:abstract-heat-estimate}
        \end{equation}
\end{lemma}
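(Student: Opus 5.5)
We multiply the equation \eqref{abstract-heat-eq} by $t^{2\gamma}\overline{\mathsf U}$, take real parts and integrate over $\R^2$; this is the weighted version of the energy method used in Proposition \ref{prop:smooth-I}. The key pointwise identity is $\partial_t |\mathsf U|^2 = 2\Re(\overline{\mathsf U}\,\sfD_t \mathsf U)$, which follows from the covariant product rule \eqref{product} (trivially when $\sfD = \nabla$). Integrating by parts in space, again via \eqref{product}, gives $-\Re\langle \mathsf U, \sfD^j\sfD_j \mathsf U\rangle = \|\sfD \mathsf U\|_{L^2_x}^2$. The boundary terms vanish for finite-energy, sufficiently regular $\mathsf U$; we would justify this by a smooth spatial cutoff, or work with smooth solutions and approximate using the well-posedness theory in Appendix \ref{sec:gwp}. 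This yields
\[
\tfrac12\frac{d}{dt}\bigl(t^{2\gamma}\|\mathsf U\|_{L^2_x}^2\bigr) + t^{2\gamma}\|\sfD\mathsf U\|_{L^2_x}^2 + t^{2\gamma}\|\mathsf U\|_{L^2_x}^2 = \gamma t^{2\gamma-1}\|\mathsf U\|_{L^2_x}^2 + t^{2\gamma}\Re\langle \mathsf U,\mathsf N\rangle .
\]

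Next, we discard the mass term, which has a favourable sign, and integrate in time from $t_1$ to any $\tau\in[t_1,t_2]$. The weight-derivative term integrates to exactly $\gamma\|t^{\gamma-\frac12}\mathsf U\|_{L^2_{t,x}}^2$. For the forcing term we use H\"older in $x$ and then in $t$:
\[
\int_{t_1}^{\tau} t^{2\gamma}|\langle\mathsf U,\mathsf N\rangle|\,dt \leq \|t^\gamma\mathsf U\|_{L^\infty_tL^2_x}\,\|t^\gamma\mathsf N\|_{L^1_tL^2_x}.
\]
Taking the supremum over $\tau$ and writing $X := \|t^\gamma\mathsf U\|_{L^\infty_tL^2_x}+\|t^\gamma\sfD\mathsf U\|_{L^2_{t,x}}$, we obtain
\[
X^2 \lesssim \|t_1^\gamma\mathsf U(t_1)\|_{L^2_x}^2 + |\gamma|\,\|t^{\gamma-\frac12}\mathsf U\|_{L^2_{t,x}}^2 + X\,\|t^\gamma\mathsf N\|_{L^1_tL^2_x}.
\]
Young's inequality absorbs the last term into the left-hand side, and taking square roots gives \eqref{abstract-heat-estimate}. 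The middle term comes out as $|\gamma|^{1/2}\|t^{\gamma-\frac12}\mathsf U\|_{L^2_{t,x}}$, which is harmless: the statement is read with constants depending on $\gamma$, and for $\gamma\le 0$ this term is nonpositive before taking roots and can simply be dropped.

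None of these steps is a real obstacle. The only delicate points are the covariant integration by parts, which is justified by \eqref{product} together with decay at infinity, and the bookkeeping of the $\gamma$-dependence in the constant.
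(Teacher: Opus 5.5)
Your proposal is correct and takes essentially the same route as the paper. The paper commutes $t^\gamma$ through the equation and tests against $t^\gamma \mathsf U$, which is the same as your multiplication by $t^{2\gamma}\overline{\mathsf U}$; it then discards the mass term, integrates forward from $t_1$, and absorbs the forcing term by Cauchy--Schwarz and Young. You also correctly note that the middle term actually comes out as $|\gamma|^{1/2}\|t^{\gamma-\frac12}\mathsf U\|_{L^2_{t,x}}$, which is harmless for the fixed $\gamma = \tfrac n2$ used later and is a point the paper glosses over.
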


\begin{proof}
    Commuting $t^\gamma$ into the abstract parabolic equation \eqref{abstract-heat-eq}, we arrive at 
        \[
            \big(\sfD_t - \sfD^j \sfD_j + 1\big) (t^\gamma \mathsf U) 
                = \gamma t^{\gamma - 1} \mathsf U + t^\gamma \mathsf N.
        \]
    Multiplying by $t^\gamma \mathsf U$, integrating on $\R^2$, and integrating-by-parts furnishes the energy identity
        \[
            \tfrac12 \partial_t \big\|t^\gamma \mathsf U\big\|_{L^2_x}^2 + \big\| t^\gamma \sfD \mathsf U \big\|_{L^2_x}^2 + \bigl\| t^\gamma \mathsf U\bigr\|_{L^2_x}^2  
                = \gamma \big\| t^{\gamma - \frac12} \mathsf U \big\|_{L^2_x}^2 + \int_{\R^2} t^\gamma \mathsf N \cdot t^\gamma \mathsf U \, dx. 
        \]
    Integrating forwards-in-time from $t = t_1$ and rearranging, we obtain
        \begin{align*}
            \tfrac12 \big\|t^\gamma \mathsf U\big\|_{L^\infty_t L^2_x}^2 + \big\| t^\gamma \sfD \mathsf U \big\|_{L^2_{t, x}}^2 
                &\leq \big\|t_1^\gamma \mathsf U(t_1)\big\|_{L^2_x}^2 + \gamma \big\|t^{\gamma - \frac12} \mathsf U\big\|_{L^2_{t, x}}^2 + \big\| t^{\gamma} \mathsf N \big\|_{L^1_t L^2_x}^2 + \tfrac14 \big\| t^\gamma \mathsf U \big\|_{L^\infty_t L^2_x}^2,
        \end{align*}
    removing the lower-order term on the left by virtue of the favourable sign, and bounding the right-hand side by the Cauchy-Schwarz inequality and Young's inequality. The last term on the right-hand side may be absorbed into the left-hand side, completing the proof. 
\end{proof}

We first apply this lemma to the higher-order analogues of \eqref{schematic-parabolic-I},

\begin{lemma}[Higher-order parabolic equations, I]\label{lem:higher-parabolic}
    Let $(A, \phi)$ be a finite-energy configuration on $[0, \infty) \times \R^2$ solving the self-dual abelian Higgs gradient flow \eqref{AHG}. Then for each non-negative integer $n \in \N_0$, we have the schematic equation 
        \begin{equation}\label{eq:schematic-higher-I}
        \begin{split}
            \bigl( \sfD_t - \sfD^j \sfD_j + 1 \bigr) \sfD^{(n)} \mathsf U  
                &= \sum_{a + b = n} \sfD^{(a)} \mathsf U \cdot \sfD^{(b)} \mathsf U \\
                    &\qquad +  \sum_{a + b + c = n - 2} \sfD^{(a)} \mathsf U \cdot \sfD^{(b)} \mathsf U \cdot \sfD^{(c)} \mathsf U  + \sum_{a + b = n - 1} \phi \cdot \sfD^{(a)} \mathsf U \cdot \sfD^{(b)} \mathsf U,
        \end{split}
        \end{equation}
    where $\sfD = \nabla$ if applied to $\mathsf U \in \{ F_{12}, \tfrac12(1 - |\phi|^2) \}$, and $\sfD = \bfD$ if applied to $\mathsf U = \bfD \phi$. 
\end{lemma}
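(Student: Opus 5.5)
We argue by induction on $n$. The base case $n = 0$ is the schematic equation \eqref{schematic-parabolic-I}, read off from \eqref{F12-parabolic}--\eqref{Dphi-parabolic}. The mass terms $|\phi|^2 \mathsf U$ are rewritten as $\mathsf U - 2\cdot\tfrac12(1-|\phi|^2)\,\mathsf U$, which is the mass $1$ plus a term of the form $\mathsf U \cdot \mathsf U$. The right-hand sides $2\Im(\overline{\bfD_1\phi}\bfD_2\phi)$, $\overline{\bfD^j\phi}\bfD_j\phi$, $\tfrac12(1-|\phi|^2)\bfD_j\phi$ and $F_{12}\bfD\phi$ are likewise all of the form $\mathsf U\cdot \mathsf U$.

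For the inductive step, assume \eqref{schematic-higher-I} holds at level $n$, and apply $\sfD_k$ to it. This gives
\[
	\bigl(\sfD_t - \sfD^j\sfD_j + 1\bigr)\sfD_k \sfD^{(n)}\mathsf U = \sfD_k(\text{RHS}_n) + \bigl[\sfD_k, \sfD_t - \sfD^j\sfD_j\bigr]\sfD^{(n)}\mathsf U .
\]

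The first term is handled by the product rule \eqref{product}. Differentiating $\sfD^{(a)}\mathsf U\cdot\sfD^{(b)}\mathsf U$ raises $a+b$ by one, as required. Differentiating a cubic term raises $a+b+c$ from $n-2$ to $n-1$. Differentiating $\phi\cdot\sfD^{(a)}\mathsf U\cdot\sfD^{(b)}\mathsf U$ either falls on the $\mathsf U$ factors, which is fine, or produces $\bfD\phi\cdot\sfD^{(a)}\mathsf U\cdot\sfD^{(b)}\mathsf U$ with $a+b = n-1$. Since $\bfD\phi$ is itself one of the $\mathsf U$'s, the latter is a cubic term $\sfD^{(0)}\mathsf U\cdot\sfD^{(a)}\mathsf U\cdot\sfD^{(b)}\mathsf U$ with total order $n-1 = (n+1)-2$. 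In the scalar cases $\sfD=\nabla$ acts on gauge-invariant expressions, so this bookkeeping is consistent.

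The commutator term is where the main care lies. In the scalar case $\sfD=\nabla$ it vanishes. In the covariant case, \eqref{commute} and \eqref{laplace-commute} give
\[
	[\bfD_k, \bfD_t - \bfD^j\bfD_j] = -iF_{tk} + 2iF\indices{_k^j}\bfD_j + i\partial^jF_{kj}.
\]
The term $F_{kj}\bfD_j\sfD^{(n)}\mathsf U$ is $F_{12}\cdot\sfD^{(n+1)}\mathsf U$, which is quadratic of order $n+1$. The term $\partial^jF_{kj}\,\sfD^{(n)}\mathsf U$ is $\nabla F_{12}\cdot \sfD^{(n)}\mathsf U$, also quadratic of order $n+1$. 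For $F_{tk}$ we use Ampère's law from \eqref{AHG}: $F_{tk} = \Im(\overline\phi\bfD_k\phi) \mp \partial F_{12}$. The $\partial F_{12}$ part again yields a quadratic term of order $n+1$. The part $\Im(\overline\phi\bfD_k\phi)$ yields $\phi\cdot\bfD\phi\cdot\sfD^{(n)}\mathsf U$, which has exactly the form $\phi\cdot\sfD^{(0)}\mathsf U\cdot\sfD^{(n)}\mathsf U$ with total order $n = (n+1)-1$.

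The point requiring verification is that no bare $\phi$ appears without a partner $\mathsf U$ in a way not covered by the schematic. The only source of undifferentiated $\phi$ is Ampère's current, which always comes paired with $\bfD\phi$, and this is what justifies the last sum in \eqref{schematic-higher-I}. Collecting all the terms closes the induction.
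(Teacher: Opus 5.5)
Your proof is correct and follows the paper's argument: induction on $n$, the product rule for the right-hand side, and the commutator of $\bfD_k$ with $\bfD_t - \bfD^j\bfD_j$ handled via Amp\`ere's law, with $\mathsf U = \bfD\phi$ the only nontrivial case. One minor point: the $F_{tk}$ term in that commutator carries the sign $+i$, not $-i$, and with the correct sign the $\partial F_{12}$ contributions from Amp\`ere's law and from the Laplacian commutator cancel exactly, as in the paper; counting them as quadratic terms of order $n+1$, as you do, is still acceptable.
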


\begin{proof}
    The case $n = 0$ is precisely \eqref{schematic-parabolic-I}; proceeding inductively, we assume the equation \eqref{schematic-higher-I} for $n$ and aim to show the $n + 1$ equation has appropriate form. Applying $\sfD$ to \eqref{schematic-higher-I}, the ensuing terms on the right-hand side arising from the product rule are acceptable, so it remains to verify that commuting $\sfD$ through the left-hand side is acceptable. The only non-trivial case is when $\mathsf U = \bfD \phi$ and $\sfD = \bfD$, for which we compute using Ampere's law \eqref{AHG}, 
        \begin{align*}
            \bigl[ \bfD_k, \bfD_t - \bfD^j \bfD_j \bigr] \bfD^{(n)} \bfD \phi
                &= \Bigl( i F_{tk}  + 2i F\indices{_k^j} \bfD_j + i \partial^j F_{kj} \Bigr) \bfD^{(n)} \bfD \phi \\
                &= i \Im(\overline \phi \bfD_k \phi) \bfD^{(n)} \bfD \phi + 2i F\indices{_k^j} \bfD_j \bfD^{(n)} \bfD \phi \\
                &= \phi \cdot \mathsf U \cdot \sfD^{(n)} \mathsf U + \mathsf U \cdot \sfD^{(n + 1)} \mathsf U . 
        \end{align*}
    This is acceptable in view of the right-hand side of \eqref{schematic-higher-I} for $n + 1$. 
\end{proof}

\begin{proposition}[Parabolic smoothing, I]\label{prop:smooth}
    Let $(A, \phi)$ be a finite-energy configuration on $[0, \infty) \times \R^2$ solving the abelian Higgs gradient flow \eqref{AHG}. For all $t_0 \geq 0$, non-negative integers $n \in \N_0$, and exponents $2 \leq p \leq \infty$, we have that
        \begin{equation}
            \bigl\| t^{\frac12 - \frac1p} (t^\frac12 \sfD)^{(n)} \mathsf U (t + t_0)  \bigr\|_{L^\infty_t L^p_x ([0, t_*])} + \bigl\| t^{\frac12 - \frac1p} (t^\frac12 \sfD)^{(n)} \sfD \mathsf U(t_0 + t) \bigr\|_{L^2_t L^p_x ([0, t_*])}
                \lesssim \cE^\frac12
        \end{equation}
    where $\sfD = \nabla$ if applied to $\mathsf U \in \{ F_{12}, \tfrac12(1 - |\phi|^2) \}$, and $\sfD = \bfD$ if applied to $\mathsf U = \bfD \phi$. 
\end{proposition}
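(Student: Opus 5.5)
We argue by induction on $n$: first we establish the $L^2_x$ case ($p = 2$) at every order, and then obtain general $p$ by the magnetic Gagliardo--Nirenberg inequalities. By time-translation invariance we may take $t_0 = 0$, writing $t \in [0, t_*]$. The case $n = 0$, $p = 2$ is exactly the basic energy estimate of Proposition \ref{prop:smooth-I}. For the inductive step we suppose that the $p = 2$ bound holds for all orders $\leq n-1$, i.e.
\[
\bigl\| t^{\frac{k}{2}} \sfD^{(k)} \mathsf U \bigr\|_{L^\infty_t L^2_x} + \bigl\| t^{\frac{k}{2}} \sfD^{(k+1)} \mathsf U \bigr\|_{L^2_{t,x}} \lesssim \cE^{\frac12}, \qquad k \leq n-1 .
\]
We then apply the abstract weighted estimate \eqref{abstract-heat-estimate} to $\sfD^{(n)} \mathsf U$ with weight $\gamma = n/2$ and $t_1 = 0$. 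The boundary term vanishes because $\gamma > 0$. The term $\gamma \| t^{\gamma - \frac12} \sfD^{(n)} \mathsf U \|_{L^2_{t,x}} = \gamma \| t^{\frac{n-1}{2}} \sfD^{(n)} \mathsf U\|_{L^2_{t,x}}$ is exactly the dissipative part of the level $n-1$ bound, and so is $\lesssim \cE^{1/2}$. This is the usual mechanism by which a power of $t$ buys one derivative.

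It remains to control the forcing $t^{n/2} \mathsf N$ in $L^1_t L^2_x$, where $\mathsf N$ is the right-hand side of \eqref{schematic-higher-I} from Lemma \ref{lem:higher-parabolic}. For the quadratic terms $\sfD^{(a)}\mathsf U \cdot \sfD^{(b)} \mathsf U$ with $a+b = n$ we place one factor in $L^4_x$ and the other in $L^4_x$, and use \eqref{GN} (equivalently Ladyzhenskaya, $\|f\|_{L^4} \lesssim \|f\|_{L^2}^{1/2} \|\sfD f\|_{L^2}^{1/2}$, valid covariantly by the diamagnetic inequality \eqref{diamagnetic}). We distribute the weights $t^{a/2} t^{b/2}$ accordingly. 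Every factor is then controlled by the $L^\infty_t L^2_x$ or $L^2_t \dot H^1$ norms at levels $\leq n$. The only top-order quantities that appear are $\| t^{n/2} \sfD^{(n)} \mathsf U \|_{L^\infty_t L^2_x}$ and $\| t^{n/2} \sfD^{(n+1)} \mathsf U\|_{L^2_{t,x}}$, which are the unknowns, and they appear with total power at most one. Hölder in time then produces a factor $t_*^{\alpha}$ with $\alpha > 0$; for instance the endpoint term $\mathsf U \cdot \sfD^{(n)} \mathsf U$ gives
\[
t_*^{\frac14} \, \cE^{\frac14} \, \|\sfD\mathsf U\|_{L^2_{t,x}}^{\frac12} \, X_n ,
\]
where $X_n$ denotes the left side at level $n$. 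Since $t_* = c(1+\cE)^{-2}$ with $c \ll 1$, this term is absorbed into the left-hand side. The cubic terms with $a+b+c = n-2$ are handled the same way, using $L^6_x$ or $L^\infty_x$ via \eqref{agmon}, and they have two spare derivatives. The terms carrying $\phi$ use the bound $\phi \in L^p_t L^\infty_x$ from Corollary \ref{cor:Lp-Linfty} with $p \leq 6$, combined with Hölder in time. To make the absorption rigorous we run a continuity argument in the upper time limit (or regularise), using the qualitative regularity from the well-posedness theory in Appendix \ref{sec:gwp}.

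With the $L^2_x$ bounds in hand at every order, the $L^p_x$ statement follows from \eqref{GN} applied to $(t^{1/2}\sfD)^{(n)} \mathsf U$:
\[
t^{\frac12 - \frac1p} \, \| f \|_{L^p_x} \lesssim \| f\|_{L^2_x}^{\frac2p} \, \| t^{\frac12} \sfD f \|_{L^2_x}^{1 - \frac2p} .
\]
The case $p = \infty$ uses \eqref{agmon} together with two more derivatives at the $L^2$ level, which is available since the $L^2$ statement holds for all $n$. The $L^2_t L^p_x$ part is obtained analogously by interpolating the $L^2_t$ dissipative bounds at consecutive orders.

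The main obstacle is the bookkeeping in the forcing estimate. At each order we must check that the top-order unknown enters at most linearly, so that it can be absorbed, with a strictly positive power of $t_*$ in front. The delicate terms are those containing $\phi$, which is not in $L^\infty_t L^\infty_x$, so only the time-integrable bound \eqref{Lp-Linfty} is available. There I would first try Hölder in time with $\phi \in L^4_t L^\infty_x$ and the remaining factors in $L^4_t L^2_x$, the latter obtained by interpolating the $L^\infty_t$ and $L^2_t$ bounds.
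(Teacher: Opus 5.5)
Your proposal is correct and is essentially the paper's argument. You reduce to $t_0 = 0$ and $p = 2$, induct on $n$ using the weighted estimate \eqref{abstract-heat-estimate} with $\gamma = n/2$ applied to \eqref{schematic-higher-I}, control the quadratic terms by $L^4_x \times L^4_x$ Gagliardo--Nirenberg, the cubic terms via the spare factor of $t$, and the $\phi$-terms via the $L^p_t L^\infty_x$ bound \eqref{Lp-Linfty}, absorbing the top-order contribution by smallness of $t_*$ (the paper phrases this as a bootstrap), and pass to general $p$ through Gagliardo--Nirenberg and the Agmon inequality \eqref{agmon}.
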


\begin{proof}
    We begin with some reductions -- we can take, without loss of generality, $t_0 = 0$ and $p = 2$. The cases of general $t_0 \geq 0$ follows from time-translation symmetry, while the case $p = \infty$ would follow from the $p = 2$ case and the magnetic Gagliardo-Nirenberg interpolation inequality \eqref{agmon} and its non-magnetic counter-part. Interpolating between these two endpoints gives the intermediate exponents $2 < p < \infty$.

    Turning to the $L^2_x$-estimates, the case $n = 0$ is precisely the content of Proposition \ref{prop:smooth-I}. Proceeding from this base case, we argue by induction, assuming the result holds up to $n - 1$. Applying the abstract parabolic energy estimate \eqref{abstract-heat-estimate} with $\gamma = \tfrac{n}{2}$ to the schematic equation \eqref{schematic-higher-I} yields 
        \begin{align*}
            \bigl\| (t^\frac12 \sfD)^{(n)} \mathsf U \bigr\|_{L^\infty_t L^2_x} + \bigl\| (t^\frac12 \sfD)^{(n)} \sfD \mathsf U \bigr\|_{L^2_{t,x}} 
                &\lesssim \bigl\| (t^\frac12 \sfD)^{(n - 1)} \sfD \mathsf U \bigr\|_{L^2_{t, x}} + \bigl\| t^{\frac12} \text{R.H.S. \text{\eqref{schematic-higher-I}}} \bigr\|_{L^1_t L^2_x} \\
                &\lesssim \cE^\frac12 + \bigl\| t^{\frac12} \text{R.H.S. \text{\eqref{schematic-higher-I}}} \bigr\|_{L^1_t L^2_x},
        \end{align*}
    applying the inductive hypothesis to handle the $L^2_{t, x}$-term on the right-hand side. It remains to estimate the right-hand side of \eqref{schematic-higher-I}. We make the bootstrap assumption that the left-hand side is $O_n (\cE^{1/2})$ for sufficiently large constant. 

    The quadratic terms on the right-hand side of \eqref{schematic-higher-I} may be estimated by 
        \begin{align*}
            \int_0^{t_*} t^{\frac{n}{2}} \bigl\| \sfD^{(a)} \mathsf U \cdot \sfD^{(b)} \mathsf U \bigr\|_{L^2_x} \, dt 
                &\lesssim \int_0^{t_*} \bigl\|(t^\frac12 \sfD)^{(a)} \mathsf U \bigr\|_{L^4_x} \, \bigl\|(t^\frac12 \sfD)^{(b)} \mathsf U \bigr\|_{L^4_x} \, dt \\
                &\lesssim \int_0^{t_*} \prod_{m = a, b} \bigl\| (t^\frac12 \sfD)^{(m)} \mathsf U \bigr\|_{L^2_x}^\frac12 \bigl\| (t^\frac12 \sfD)^{(m)} \sfD \mathsf U  \bigr\|_{L^2_x}^{\frac12} \, dt\\
                &\lesssim |t_*|^{\frac12} \prod_{m = a, b} \bigl\| (t^\frac12 \sfD)^{(m)} \mathsf U \bigr\|_{L^\infty_t L^2_x}^\frac12 \bigl\| (t^\frac12 \sfD)^{(m)} \sfD \mathsf U  \bigr\|_{L^2_{t, x}}^{\frac12} \\
                &\lesssim |t_*|^{\frac12} \cE, 
        \end{align*}
    for $a + b = n$; we have applied Cauchy-Schwarz in $x$ in the first line, Gagliardo-Nirenberg in the second line, H\"older in $t$ in the third line, and either the bootstrap assumption or the induction hypothesis in the last line. Taking $|t_*| \ll \cE^{-1}$ as assumed allows us to regard this as perturbative. 

    Arguing similarly for the first set of cubic terms in \eqref{schematic-higher-I} gives 
        \begin{align*}
            \int_0^{t_*} t^{\frac{n}{2}} \bigl\| \sfD^{(a)} \mathsf U \cdot \sfD^{(b)} \mathsf U \cdot \sfD^{(c)} \mathsf U \bigr\|_{L^2_x} \, dt 
                &\lesssim  \int_0^{t_*}  \bigl\| t^\frac12 (t^\frac12 \sfD)^{(a)} \mathsf U \bigr\|_{L^2_x} \bigl\| t^\frac12 (t^\frac12 \sfD)^{(b)} \mathsf U \bigr\|_{L^\infty_x} \bigl\| (t^\frac12 \sfD)^{(c)} \mathsf U \bigr\|_{L^\infty_x} \, dt \\
                &\lesssim \int_0^{t_*} \bigl\| (t^\frac12 \sfD)^{(a)} \mathsf U \bigr\|_{L^2_x}  \prod_{m = b, c} \bigl\| (t^\frac12 \sfD)^{(m)} \mathsf U \bigr\|_{L^2_x}^{\frac12} \bigl\| (t^\frac12 \sfD)^{(m + 1)} \sfD \mathsf U \bigr\|_{L^2_x}^{\frac12} \, dt \\
                &\lesssim |t_*|^{\frac12} \bigl\| (t^\frac12 \sfD)^{(a)} \mathsf U \bigr\|_{L^\infty_t L^2_x} \prod_{m = b, c} \bigl\| (t^\frac12 \sfD)^{(m)} \mathsf U \bigr\|_{L^\infty_t L^2_x}^{\frac12} \bigl\| (t^\frac12 \sfD)^{(m + 1)} \sfD \mathsf U \bigr\|_{L^2_{t, x}}^{\frac12} \\
                &\lesssim |t_*|^{\frac12} \cE^{\frac32}
        \end{align*}
    for $a + b + c = n - 2$. This is the most dangerous term, requiring us to take $|t_*| \ll \cE^{-2}$.
        
    For the second set of cubic terms in \eqref{schematic-higher-I}, we proceed similarly, with the addition of estimating the scalar field using the $L^2_t L^\infty_x$-bound \eqref{Lp-Linfty},
        \begin{align*}
            \int_0^{t_*} t^{\frac{n}{2}} \bigl\| \phi \cdot \sfD^{(a)} \mathsf U \cdot \sfD^{(b)} \mathsf U \bigr\|_{L^2_x} \, dt
                &\lesssim \int_0^{t_*} t^\frac12 \|  \phi \|_{L^\infty_x} \bigl\| (t^\frac12 \sfD)^{(a)} \mathsf U \bigr\|_{L^4_x} \bigl\| (t^\frac12 \sfD)^{(b)} \mathsf U \bigr\|_{L^4_x} \, dt \\
                &\lesssim \int_0^{t_*} t^{\frac12} \|\phi \|_{L^\infty_x} \prod_{m = a, b} \bigl\| (t^\frac12 \sfD)^{(m)} \mathsf U \bigr\|_{L^2_x}^\frac12 \bigl\| (t^\frac12 \sfD)^{(m)} \sfD \mathsf U  \bigr\|_{L^2_x}^{\frac12} \, dt\\
                &\lesssim |t_*|^{\frac12} \| \phi \|_{L^2_t L^\infty_x} \prod_{m = a, b} \bigl\| (t^\frac12 \sfD)^{(m)} \mathsf U \bigr\|_{L^\infty_t L^2_x}^\frac12 \bigl\| (t^\frac12 \sfD)^{(m)} \sfD \mathsf U  \bigr\|_{L^2_{t, x}}^{\frac12} \\
                &\lesssim |t_*|^{\frac12}, 
        \end{align*}
    for $a + b = n - 1$. There is considerable room here, so this contribution is easily seen to be acceptable. We conclude the induction and thereby the proof. 
\end{proof}

Using the smoothing estimates for the $\mathsf U$ variables, we can prove smoothing estimates for the variables $\mathsf B$. Indeed, the equation \eqref{schematic-parabolic-II} is roughly-speaking a linearisation of \eqref{schematic-parabolic-I}. We will follow a similar argument as in Proposition \ref{prop:smooth}; for this reason, we paint the proof here in broad strokes, and leave some of the intermediary justifications to the reader. 

\begin{lemma}[Higher-order parabolic equations, II]
    Let $(A, \phi)$ be a configuration on $[0 ,\infty) \times \R^2$ solving the self-dual abelian Higgs gradient flow \eqref{AHG}. Then, for each non-negative integer $n \in \N_0$, the following covariant parabolic equation holds, 
        \begin{equation}
        \begin{split}
            \bigl(\sfD_t - \sfD^j \sfD_j + 1\bigr) \sfD^{(n)} \mathsf B
                &= \sum_{a + b = n} \sfD^{(a)} \mathsf U \cdot \sfD^{(b)} \mathsf B \\
                    &\qquad +  \sum_{a + b + c = n - 2} \sfD^{(a)} \mathsf U \cdot \sfD^{(b)} \mathsf U \cdot \sfD^{(c)} \mathsf B  + \sum_{a + b = n - 1} \phi \cdot \sfD^{(a)} \mathsf U \cdot \sfD^{(b)} \mathsf B,
        \end{split}\label{eq:schematic-higher-II}
        \end{equation}
    where $\sfD = \nabla$ if applied to $\mathsf B = \Gauss$ and $\mathsf U \in \{ F_{12}, \tfrac12(1 - |\phi|^2) \}$, and $\sfD = \bfD$ if applied to $\mathsf B = \bfD_{\overline z} \phi$ and $\mathsf U = \bfD \phi$. 
\end{lemma}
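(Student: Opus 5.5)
We argue by induction on $n$, following the proof of Lemma \ref{lem:higher-parabolic} essentially verbatim. The base case $n = 0$ is exactly the schematic form \eqref{schematic-parabolic-II}, read off from \eqref{Dbar-heat}--\eqref{Gauss-heat}. In \eqref{Dbar-heat}, write $\tfrac12(1+|\phi|^2) = 1 - \tfrac12(1-|\phi|^2)$; the correction $\tfrac12(1-|\phi|^2)\bfD_{\overline z}\phi$ and the term $\Gauss \cdot 2\bfD_{\overline z}\phi$ then have the form $\mathsf U \cdot \mathsf B$, since $\Gauss$ is a linear combination of $F_{12}$ and $\tfrac12(1-|\phi|^2)$. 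In \eqref{Gauss-heat}, write $|\phi|^2 = 1 - (1-|\phi|^2)$. The right-hand side $|2\bfD_{\overline z}\phi|^2$ is a product of $\bfD\phi$ and $\bfD_{\overline z}\phi$, so it is also of the form $\mathsf U\cdot\mathsf B$, with $\mathsf U = \bfD\phi$ and $\mathsf B = \bfD_{\overline z}\phi$. We interpret the dot notation as allowing this mixing of the two choices of $(\mathsf U,\mathsf B)$.

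For the inductive step, assume \eqref{schematic-higher-II} holds at level $n$ and apply $\sfD$ to it. On the right-hand side, the product rule (and its covariant analogue \eqref{product}) distributes one derivative onto one factor. This raises $a+b$ from $n$ to $n+1$ in the quadratic sum and $a+b+c$ from $n-2$ to $n-1$ in the first cubic sum. In the second cubic sum, the derivative may land on $\phi$. In that case we use $\bfD\phi = \mathsf U$, so the term becomes $\bfD\phi\cdot\sfD^{(a)}\mathsf U\cdot\sfD^{(b)}\mathsf B$ with $a+b = n-1 = (n+1)-2$, which belongs to the first cubic sum at level $n+1$. Otherwise it stays in the second cubic sum with $a+b = n$.

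It remains to commute $\sfD$ past the operator $\sfD_t - \sfD^j\sfD_j + 1$. When $\sfD = \nabla$, as for $\mathsf B = \Gauss$, this commutator is trivial. The only nontrivial case is $\mathsf B = \bfD_{\overline z}\phi$, where we use the computation from the proof of Lemma \ref{lem:higher-parabolic}: by \eqref{laplace-commute} and Ampère's law \eqref{AHG},
\[
[\bfD_k, \bfD_t - \bfD^j\bfD_j]\,\bfD^{(n)}\mathsf B = i\Im(\overline\phi\bfD_k\phi)\,\bfD^{(n)}\mathsf B + 2iF\indices{_k^j}\bfD_j\bfD^{(n)}\mathsf B ,
\]
after the $\partial^j F_{kj}$ contributions cancel. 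The first term is $\phi\cdot\mathsf U\cdot\sfD^{(n)}\mathsf B$, which belongs to the second cubic sum at level $n+1$. The second is $F_{12}\cdot\sfD^{(n+1)}\mathsf B$, which is of the form $\mathsf U\cdot\sfD^{(n+1)}\mathsf B$ and belongs to the quadratic sum at level $n+1$.

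I expect the main obstacle to be bookkeeping rather than analysis. First, we must confirm that the gauge covariance of each term is consistent, so that $\bfD$, and not $\nabla$, acts on covariant factors and $\nabla$ acts on gauge-invariant ones. Second, we must check that the mixed $\nabla$/$\bfD$ products arising from $|2\bfD_{\overline z}\phi|^2$ fit the stated schematic. Both follow from \eqref{product}, since $\partial(\overline\Phi\Psi) = \overline{\bfD\Phi}\Psi + \overline\Phi\bfD\Psi$, so all terms remain in the required multilinear form.
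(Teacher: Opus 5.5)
Your proposal is correct and is essentially the paper's proof, which simply runs the induction of Lemma \ref{lem:higher-parabolic} verbatim with \eqref{schematic-parabolic-II} as the base case, reusing the same commutator computation via Amp\`ere's law. Your explicit remark that the dot notation must allow the pairings of $\mathsf U$ and $\mathsf B$ to mix is a bookkeeping point the paper leaves implicit. Examples are $|2\bfD_{\overline z}\phi|^2$ in the $\Gauss$ equation, and $\tfrac12(1-|\phi|^2)\bfD_{\overline z}\phi$ and $F_{12}\,\bfD^{(n+1)}\bfD_{\overline z}\phi$ in the $\bfD_{\overline z}\phi$ equation.
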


\begin{proof}
    The proof is almost verbatim that of Lemma \ref{lem:higher-parabolic}, using \eqref{schematic-parabolic-II} as the base case $n = 0$. 
\end{proof}

\begin{proposition}[Parabolic smoothing, II]\label{prop:smooth-II}
    Let $(A, \phi)$ be a finite-energy configuration on $[0, \infty) \times \R^2$ solving the self-dual abelian Higgs gradient flow \eqref{AHG}. For all $t_0 \geq 0$, non-negative integers $n \in \N_0$, and exponents $2 \leq p \leq \infty$, we have that 
        \begin{equation}\label{eq:B-smoothing}
            \bigl\| t^{\frac12 - \frac1p} (t^\frac12 \sfD)^{(n)} \mathsf B (t + t_0)  \bigr\|_{L^\infty_t L^p_x ([0, t_*])} + \bigl\| t^{\frac12 - \frac1p} (t^\frac12 \sfD)^{(n)} \sfD \mathsf B(t_0 + t) \bigr\|_{L^2_t L^p_x ([0, t_*])}
                \lesssim \| \mathsf B (t_0) \|_{L^2_x},
        \end{equation}
    where $\sfD = \nabla$ if applied to $\mathsf B = \Gauss$, and $\sfD = \bfD$ if applied to $\mathsf B = \bfD_{\overline z} \phi$.
\end{proposition}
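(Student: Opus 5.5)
The argument mirrors the proof of Proposition \ref{prop:smooth}, with one difference: equation \eqref{schematic-higher-II} is \emph{linear} in $\mathsf B$, and its coefficients are built from $\mathsf U$ and $\phi$, which are already controlled by Proposition \ref{prop:smooth} and Corollary \ref{cor:Lp-Linfty}. This linearity is why the right-hand side of \eqref{B-smoothing} is $\|\mathsf B(t_0)\|_{L^2_x}$ rather than $\cE^{1/2}$. As before, time-translation symmetry lets us take $t_0 = 0$. We reduce to $p = 2$ exactly as in Proposition \ref{prop:smooth}: the case $p = \infty$ follows from the $p = 2$ bounds at orders $n$ and $n+2$ via the Agmon inequality \eqref{agmon} or its non-magnetic analogue, and the intermediate exponents follow by interpolation.

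\emph{Base case $n = 0$.} We apply the abstract estimate \eqref{abstract-heat-estimate} with $\gamma = 0$ to \eqref{schematic-parabolic-II}, which gives
\[
\|\mathsf B\|_{L^\infty_t L^2_x} + \|\sfD \mathsf B\|_{L^2_{t,x}} \lesssim \|\mathsf B(0)\|_{L^2_x} + \|\mathsf U\cdot \mathsf B\|_{L^1_t L^2_x}.
\]
For the bilinear term we use Hölder in $x$ with $L^4\times L^4$, then Gagliardo--Nirenberg \eqref{GN} in its magnetic or non-magnetic form. For $\mathsf U$, the $L^4_x$ norm is bounded by $\|\mathsf U\|_{L^2}^{1/2}\|\sfD \mathsf U\|_{L^2}^{1/2}$, and the same holds for $\mathsf B$. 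Applying Hölder in time then yields
\[
\|\mathsf U\cdot\mathsf B\|_{L^1_tL^2_x} \lesssim |t_*|^{1/2}\,\cE^{1/2}\,\bigl(\|\mathsf B\|_{L^\infty_tL^2_x} + \|\sfD\mathsf B\|_{L^2_{t,x}}\bigr),
\]
using Proposition \ref{prop:smooth-I} for the $\mathsf U$ factors. Since $t_* \ll \cE^{-1}$, this term is absorbed into the left-hand side, with no bootstrap needed. One subtlety: the products in \eqref{Dbar-heat} pair $\Gauss$ (a $\nabla$-type variable) with $\bfD_{\overline z}\phi$, and the source $|2\bfD_{\overline z}\phi|^2$ in \eqref{Gauss-heat} is of the form $\mathsf U\cdot\mathsf B$ with $\mathsf U = \bfD\phi$. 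The schematic notation mixes the two types of $\mathsf U$ and $\mathsf B$, so the system must be estimated jointly, summing the bounds for $\Gauss$ and $\bfD_{\overline z}\phi$ together. The diamagnetic inequality makes the Gagliardo--Nirenberg bounds insensitive to which derivative is used.

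\emph{Inductive step.} Assume the $L^2$ bound holds up to order $n-1$. We apply \eqref{abstract-heat-estimate} with $\gamma = n/2$ to \eqref{schematic-higher-II}. The $\gamma$-term $\|(t^{1/2}\sfD)^{(n-1)}\sfD\mathsf B\|_{L^2_{t,x}}$ is controlled by the inductive hypothesis, giving $\lesssim \|\mathsf B(0)\|_{L^2_x}$. The nonlinear terms are estimated exactly as in Proposition \ref{prop:smooth}, with one factor of $\mathsf U$ replaced by $\mathsf B$:
\begin{itemize}
\item the quadratic terms contribute $|t_*|^{1/2}\cE^{1/2}\|\mathsf B(0)\|_{L^2}$;
\item the cubic terms, with the $\mathsf B$ factor placed in whichever of $L^2_x$ or $L^\infty_x$ the earlier argument assigned to it, contribute $|t_*|^{1/2}\cE\,\|\mathsf B(0)\|_{L^2}$;
\item the $\phi$ terms, using the $L^2_tL^\infty_x$ bound \eqref{Lp-Linfty}, contribute $|t_*|^{1/2}\cE^{1/2}\|\mathsf B(0)\|_{L^2}$.
\end{itemize}
At the top order the $\mathsf B$-factor is bounded by the left-hand side itself, and that contribution is absorbed; lower-order $\mathsf B$ factors use the inductive hypothesis. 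The choice $t_* = c(1+\cE)^{-2}$ makes all of these contributions perturbative.

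\emph{Main obstacle.} The delicate step is the cubic term. There the $L^\infty_x$ placement of a $\mathsf B$ factor requires the order-$(m+1)$ bound on $\sfD\mathsf B$, so for the top-order factor the estimate must be closed by absorption rather than by appeal to the inductive hypothesis. Apart from this, the bookkeeping of powers of $t^{1/2}$ is identical to Proposition \ref{prop:smooth}.
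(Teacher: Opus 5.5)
Your proposal is correct and follows the paper's argument: reduction to $t_0 = 0$ and $p = 2$, induction via the weighted energy estimate with $\gamma = n/2$, and quadratic, cubic and $\phi$-terms made perturbative by $t_* = c(1+\cE)^{-2}$. Two points differ from the paper: you close by absorption (legitimate, since the left-hand side is a priori finite by Proposition~\ref{prop:smooth}) rather than by bootstrap, and your joint treatment of $(\Gauss, \bfD_{\overline z}\phi)$ usefully makes explicit that the right-hand side must be the full $\|\Bogomolnyi(t_0)\|_{L^2_x}$.

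One small correction to your ``main obstacle'': in the cubic terms every factor has order at most $n-2$, so the Agmon placement needs only bounds up to order $n-1$, which the inductive hypothesis supplies; absorption is needed only for the quadratic term where $\mathsf B$ carries all $n$ derivatives.
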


\begin{proof}
    It will suffice to consider $t = 0$ and $p = 2$. We assume for induction the result up to $n - 1$. Applying the abstract parabolic energy estimate \eqref{abstract-heat-estimate} with $\gamma = \tfrac{n}{2}$ to the schematic equation \eqref{schematic-higher-II} yields
        \begin{align*}
            \bigl\| (t^\frac12 \sfD)^{(n)} \mathsf B \bigr\|_{L^\infty_t L^2_x} + \bigl\| (t^\frac12 \sfD)^{(n)} \sfD \mathsf B \bigr\|_{L^2_{t,x}} 
                &\lesssim \bigl\| (t^\frac12 \sfD)^{(n - 1)} \sfD \mathsf B \bigr\|_{L^2_{t, x}} + \bigl\| t^{\frac12} \text{R.H.S. \text{\eqref{schematic-higher-II}}} \bigr\|_{L^1_t L^2_x} \\
                &\lesssim \| \mathsf B^{\mathrm{in}} \|_{L^2_x} + \bigl\| t^{\frac12} \text{R.H.S. \text{\eqref{schematic-higher-II}}} \bigr\|_{L^1_t L^2_x},
        \end{align*}
    applying the inductive hypothesis to handle the $L^2_{t, x}$-term on the right-hand side. It remains to estimate the right-hand side of \eqref{schematic-higher-I}. We make the bootstrap assumption that the left-hand side is $O_n (\| \mathsf B^{\mathrm{in}} \|_{L^2_x})$ for sufficiently large constant. 
        
    The quadratic terms on the right-hand side of \eqref{schematic-higher-I} may be estimated by 
        \begin{align*}
            \int_0^{t_*} t^{\frac{n}{2}} \bigl\| \sfD^{(a)} \mathsf U \cdot \sfD^{(b)} \mathsf B \bigr\|_{L^2_x} \, dt 
                &\lesssim |t_*|^{\frac12} \cE^\frac12 \| \mathsf B^{\mathrm{in}} \|_{L^2_x}, 
        \end{align*}
    for $a + b = n$, the first set of cubic terms in \eqref{schematic-higher-II} by 
        \begin{align*}
            \int_0^{t_*} t^{\frac{n}{2}} \bigl\| \sfD^{(a)} \mathsf U \cdot \sfD^{(b)} \mathsf U \cdot \sfD^{(c)} \mathsf B \bigr\|_{L^2_x} \, dt 
                &\lesssim |t_*|^{\frac12} \cE \| \mathsf B^{\mathrm{in}} \|_{L^2_x},
        \end{align*}
    for $a + b + c = n - 2$, and the second set of cubic terms in \eqref{schematic-higher-II} by 
        \begin{align*}
            \int_0^{t_*} t^{\frac{n}{2}} \bigl\| \phi \cdot \sfD^{(a)} \mathsf U \cdot \sfD^{(b)} \mathsf B \bigr\|_{L^2_x} \, dt
                &\lesssim |t_*|^{\frac12} (1 + \cE)^{-1} \cE^{\frac12} \| \mathsf B^{\mathrm{in}} \|_{L^2_x}, 
        \end{align*}
    for $a + b = n - 1$. Taking $|t_*| = c (1 + \cE)^{-2}$ for some $c \ll 1$ and using the smoothing bounds \eqref{smoothing} from Proposition \ref{prop:smooth} gives the result. 
\end{proof}

\section{Convergence of the flow}\label{sec:converge}

Define the \textit{Bogomoln'yi tension field} by 
    \[
        \Bogomolnyi
            := (\Gauss, 2\bfD_{\overline z} \phi). 
    \]
One should view the Bogomoln'yi tension field as measuring the extent to which a configuration fails to satisfy the Bogomoln'yi equations \eqref{bogomolnyi}. Our strategy for proving Theorem \ref{thm:main} consists of two steps, (\texttt{i}) prove covariant decay estimates for the Bogomoln'yi tension field, (\texttt{ii}) convert the decay estimates into difference estimates for the configuration by recasting them in terms of the Bogomoln'yi tension field. The Bogomoln'yi identity \eqref{Bogomolnyi-energy} implies that
    \[
         \| \Bogomolnyi \|_{L^2_x}^2
            \sim \cE[A, \phi] - \pi \deg [A, \phi],
    \]
so hereon, we restate the smallness assumption in Theorem \ref{thm:main} in terms of the Bogomoln'yi tension field.

\subsection{Decay of Bogomoln'yi tension field}\label{sec:decay}

Packaging together the equations \eqref{Dbar-heat}-\eqref{Gauss-heat}, the Bogomoln'yi tension field satisfies a massive parabolic equation with quadratic non-linearity, namely
    \begin{equation}\label{eq:schematic-B}
        \bigl(\sfD_t - \sfD^j \sfD_j + \mathsf m(\phi) \bigr) \Bogomolnyi 
            = \Bogomolnyi \cdot \Bogomolnyi,
    \end{equation}
where the mass is
    \[
        \mathsf m(\phi) 
            := \bigl(|\phi|^2, \tfrac12(1 + |\phi|^2)\bigr),
    \]  
and 
    \[ 
        \mathsf D 
            := (\nabla, \bfD),
    \] 
acts on the Bogomoln'yi tension field $\Bogomolnyi$ in a component-wise fashion. One should view the presence of mass in \eqref{schematic-B} as a manifestation of the eponymous \textit{Higgs mechanism} in this particular problem. The model equation one should compare with is the massive semi-linear heat equation
    \[
        (\partial_t - \Delta + 1) u = u^2.
    \]
It is not difficult to show using an energy argument that solutions to the model equation with small initial data in $L^2 (\R^2)$ decay exponentially in time. With some minor modifications, one can run the same argument on the system of equations \eqref{schematic-B} to show 

\begin{proposition}[Exponential decay of tension field]\label{prop:small-decay}
    Let $N \in \N_0$ be a non-negative integer, and suppose that $\epsilon_* \ll_N 1$ and $\gamma \ll_N 1$ are sufficiently small. Consider a finite-energy configuration $(A^{\mathrm{in}}, \phi^{\mathrm{in}})$ on $\R^2$ with topological degree $N$ and self-dual energy satisfying
        \begin{equation}\label{eq:small-tension}
            \| \Bogomolnyi^{\mathrm{in}} \|_{L^2_x}^2 
                \leq \epsilon_*.
        \end{equation}
    Then the solution $(A, \phi)$ to the self-dual abelian Higgs gradient flow \eqref{AHG} on $[0, \infty) \times \R^2$ with initial data $(A, \phi)_{|t = 0} = (A^{\mathrm{in}}, \phi^{\mathrm{in}})$ satisfies the decay estimate
        \begin{equation}\label{eq:bogomolnyi-decay}
            \sup_{t \in [0, \infty)} e^{\gamma t} \| \Bogomolnyi \|_{L^2_x}^2 + \frac{1}{1 + N} \int_0^\infty e^{\gamma t} \Bigl(\| \sfD \Bogomolnyi \|_{L^2_x}^2 + \| \Bogomolnyi \|_{L^2_x}^2 \Bigr) \, dt
            \lesssim \| \Bogomolnyi^{\mathrm{in}} \|_{L^2_x}^2 . 
    \end{equation}
\end{proposition}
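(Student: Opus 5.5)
The plan is to run the energy method on the packaged system \eqref{schematic-B} for $\Bogomolnyi$. The mass $\mathsf m(\phi)$ is not bounded below: it degenerates near the zeros of $\phi$, where $|\phi|^2 \approx 0$ in the $\Gauss$-component. So the key additional input is a coercivity (spectral-gap) estimate that recovers an effective mass of size $\sim (1+N)^{-1}$ from the smallness of the set where $|\phi|$ is small. This explains the factor $\frac{1}{1+N}$ in \eqref{bogomolnyi-decay}.

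\emph{Step 1 (a priori smallness for all time).} By Proposition \ref{lem:vorticity-II}, the degree is preserved along the flow. The needed finite $L^2$-length on compact time intervals comes from the energy identity and Cauchy–Schwarz in time, and uniform energy bounds come from energy monotonicity. The Bogomoln'yi identity \eqref{Bogomolnyi-energy} then gives that $\|\Bogomolnyi(t)\|_{L^2_x}^2 \sim \cE[A,\phi](t) - \pi N$ is non-increasing, hence $\le C\epsilon_*$ for all $t \ge 0$. Also $\cE(t) \le \pi N + \epsilon_* \lesssim 1+N$. I would justify all computations for $t>0$ using the smoothing estimates of Propositions \ref{prop:smooth} and \ref{prop:smooth-II}, and pass to $t \to 0$ by continuity.

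\emph{Step 2 (energy identity).} Pair \eqref{schematic-B} with $\Bogomolnyi$ and integrate by parts. The pairing with $\Bogomolnyi$ is taken componentwise: for the $\bfD_{\overline z}\phi$ component use $\Re\langle\cdot,\cdot\rangle$ and the covariant product rule \eqref{product}. This gives
\[
\tfrac12\tfrac{d}{dt}\|\Bogomolnyi\|_{L^2_x}^2 + \|\sfD\Bogomolnyi\|_{L^2_x}^2 + \int \mathsf m(\phi)|\Bogomolnyi|^2\,dx \lesssim \|\Bogomolnyi\|_{L^3_x}^3 \lesssim \|\Bogomolnyi\|_{L^2_x}^2\|\sfD\Bogomolnyi\|_{L^2_x},
\]
by \eqref{GN} with $p=3$ (using the diamagnetic inequality for the $\bfD$-component). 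By Step 1, $\|\Bogomolnyi\|_{L^2_x}^2 \lesssim \epsilon_*$. The right-hand side is then bounded by $\epsilon_*^{1/2}\|\Bogomolnyi\|_{L^2_x}\|\sfD \Bogomolnyi\|_{L^2_x}$, and I expect it can be absorbed once Step 3 is in place, since that step controls $\|\Bogomolnyi\|_{L^2}$ by the dissipation.

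\emph{Step 3 (coercivity; the main obstacle).} Let $S = \{|\phi|^2 < \tfrac12\}$. By Chebyshev, $|S| \le 4\|1-|\phi|^2\|_{L^2_x}^2 \lesssim \cE(t) \lesssim 1+N$. For a component $\mathsf b$ of $\Bogomolnyi$, split $\int|\mathsf b|^2 = \int_{S^c} + \int_S$. On $S^c$ the mass is $\ge \tfrac12 \cdot$ const, so that part is controlled by $\int \mathsf m|\mathsf b|^2$. On $S$, use H\"older, then \eqref{GN} with $p=4$ together with the diamagnetic inequality:
\[
\int_S |\mathsf b|^2 \le |S|^{1/2}\|\mathsf b\|_{L^4_x}^2 \lesssim |S|^{1/2}\|\mathsf b\|_{L^2_x}\|\sfD\mathsf b\|_{L^2_x}.
\]
Young's inequality then yields
\[
\|\Bogomolnyi\|_{L^2_x}^2 \lesssim \int\mathsf m(\phi)|\Bogomolnyi|^2 + (1+N)\|\sfD\Bogomolnyi\|_{L^2_x}^2 .
\]
Consequently the dissipation $\|\sfD\Bogomolnyi\|^2 + \int \mathsf m|\Bogomolnyi|^2$ dominates $\frac{c}{1+N}\bigl(\|\sfD\Bogomolnyi\|^2 + \|\Bogomolnyi\|^2\bigr)$. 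I expect the care to be needed here, in making the constant depend only on $N$ uniformly in time; this is exactly why Step 1's monotone energy bound is essential.

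\emph{Step 4 (conclusion).} With $\epsilon_* \ll_N 1$, absorb the cubic term from Step 2 into half of this coercive dissipation. This gives
\[
\frac{d}{dt}\|\Bogomolnyi\|_{L^2_x}^2 + \frac{c}{1+N}\bigl(\|\sfD\Bogomolnyi\|_{L^2_x}^2 + \|\Bogomolnyi\|_{L^2_x}^2\bigr) \le 0 .
\]
Multiply by $e^{\gamma t}$ with $\gamma \ll \frac{c}{1+N}$. The term $\gamma e^{\gamma t}\|\Bogomolnyi\|^2$ generated by the weight is absorbed by half the dissipation. Integrating in time from $0$ to $\infty$ yields \eqref{bogomolnyi-decay}.
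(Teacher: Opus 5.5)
Your argument is correct. Its skeleton matches the paper's: the energy identity for the tension field, coercivity with constant $\sim 1+N$, absorption of the cubic term, and the weight $e^{\gamma t}$ with $\gamma \ll (1+N)^{-1}$. The genuine difference is how you secure uniform-in-time smallness of $\|\Bogomolnyi(t)\|_{L^2_x}$.

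\textbf{The paper's route.} It closes by a continuity argument on the weighted quantity $\sup_{[0,T]} e^{\gamma t}\|\Bogomolnyi\|_{L^2_x}^2 + \frac{1}{C(1+N)}\int_0^T e^{\gamma t}(\cdots)\,dt \le 2\epsilon$. It uses this bootstrap hypothesis to control both $\sup_t\|\Bogomolnyi(t)\|_{L^2_x}$ and the weighted dissipation integral appearing in the cubic term.

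\textbf{Your route.} You get the needed smallness directly from the gradient-flow structure. Degree preservation (Proposition \ref{lem:vorticity-II}, with finite $L^2$-length on each $[0,T]$ from the energy identity and Cauchy--Schwarz) combined with the Bogomoln'yi identity gives $\|\Bogomolnyi(t)\|_{L^2_x}^2 = 2\bigl(\cE[A,\phi](t) - \pi N\bigr)$. This is non-increasing, so $\|\Bogomolnyi(t)\|_{L^2_x}^2 \le \epsilon_*$ for all $t$. The cubic term can then be absorbed into the coercive dissipation pointwise in time, provided $\epsilon_* \ll (1+N)^{-2}$, and no bootstrap is needed.

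\textbf{What each buys.} Your version is cleaner and makes explicit where energy monotonicity enters. The paper also uses monotonicity, but only tacitly, to replace $\cE[\phi(t)]$ by $1+N$ in the coercivity constant. The paper's bootstrap is more robust: it would survive perturbations of the system in which $\|\Bogomolnyi\|_{L^2_x}^2$ is no longer a Lyapunov functional. It also needs the degree only at $t=0$, not at every time.

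\textbf{Coercivity.} Your sublevel-set/Chebyshev argument on $\{|\phi|^2<\tfrac12\}$ is equivalent to the paper's H\"older bound $\int(1-|\phi|^2)|u|^2\,dx \le \|1-|\phi|^2\|_{L^2_x}\|u\|_{L^4_x}^2$. Both rest on the same Gagliardo--Nirenberg input and produce the same $(1+\cE)$ loss.
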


The proof follows a standard energy and continuity argument, modulo a technical coercivity lemma. Testing \eqref{schematic-B} against $\Bogomolnyi$ and integrating-by-parts, we obtain the differential identity 
    \begin{align}\label{eq:diff-identity1}
        \frac{d}{dt} \bigl\| \Bogomolnyi \bigr\|_{L^2_x}^2 + \Big\langle  \Bogomolnyi, \bigl( - \sfD^j \sfD_j + \mathsf m (\phi)\bigr) \Bogomolnyi \Big\rangle_{L^2}
            = \int_{\R^2} \Bogomolnyi \cdot \Bogomolnyi \cdot \Bogomolnyi \, dx. 
    \end{align}
Ignoring the non-linear terms on the right-hand side, to gain exponential decay in time, it would follow from Gronwall's inequality provided that the bilinear form
    \[
        (u, v) \mapsto \Big\langle  u, \bigl( - \sfD^j \sfD_j + \mathsf m (\phi)\bigr) v \Big\rangle_{L^2}
    \]
is coercive on $L^2 (\R^2 \to \R \times \C)$. The mass $\tfrac12(1 + |\phi|^2)$ within the scalar field equation \eqref{Dbar-heat} is uniformly bounded away from zero, so coercivity for this component is immediate. However, the mass $|\phi|^2$ within the Gauss tension field equation \eqref{Gauss-heat} may vanish or become arbitrarily small. Nonetheless, the finite-energy condition implies that this mass is non-trivial in an averaged sense, allowing us to prove

\begin{lemma}[Coercivity lemma]\label{lemma:coercive}
    Let $(A, \phi) \in \frE$ be a finite-energy configuration. Then, for $u \in H^1 (\R^2 \to \C)$, 
        \begin{equation}\label{eq:coercive-I}
            \|\nabla u\|_{L^2_x} + \| u \|_{L^2_x}^2
                \lesssim (1 + \cE[\phi]) \, \Bigl\langle u, \bigl(- \Delta + |\phi|^2\bigr) u \Bigr\rangle,
        \end{equation}
    and 
        \begin{equation}\label{eq:coercive-II}
            \| \bfD u \|_{L^2_x}^2 + \| u \|_{L^2_x}^2
                \lesssim \Bigl\langle u, \bigl(-\bfD^j \bfD_j + \tfrac12(1 + |\phi|^2)\bigr) u\Bigr\rangle.
        \end{equation}
\end{lemma}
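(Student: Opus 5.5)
Part \eqref{coercive-II} is the easy half, and I would dispose of it first. Integrating by parts gives
\[
\bigl\langle u, (-\bfD^j\bfD_j + \tfrac12(1+|\phi|^2))u\bigr\rangle = \|\bfD u\|_{L^2_x}^2 + \tfrac12\int_{\R^2}(1+|\phi|^2)|u|^2\,dx \geq \|\bfD u\|_{L^2_x}^2 + \tfrac12\|u\|_{L^2_x}^2 .
\]
This is exactly the claim. The integration by parts is justified for $u$ with $\bfD u\in L^2$ by density, and the left-hand side of \eqref{coercive-I} should be read with $\|\nabla u\|_{L^2_x}^2$.

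For \eqref{coercive-I}, the bilinear form equals $Q(u):=\|\nabla u\|_{L^2_x}^2 + \int|\phi|^2|u|^2$. The gradient term is already controlled, so everything reduces to proving $\|u\|_{L^2_x}^2 \lesssim (1+\cE)\,Q(u)$. I would split the mass pointwise:
\[
\int|u|^2 \leq 2\int |\phi|^2|u|^2 + 2\int_{\{|\phi|^2<1/2\}}|u|^2 .
\]
The first term is bounded by $Q(u)$. For the second, set $S=\{|\phi|^2<\tfrac12\}$. On $S$ we have $(1-|\phi|^2)^2\geq\tfrac14$, so the finite potential energy gives $|S|\leq 16\cE$. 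Hence the remaining task is to bound the $L^2$ mass of $u$ on a set of finite measure.

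By H\"older and the Ladyzhenskaya inequality (\eqref{GN} with $A=0$, $p=4$),
\[
\int_S|u|^2\leq |S|^{1/2}\|u\|_{L^4_x}^2\lesssim \cE^{1/2}\|u\|_{L^2_x}\|\nabla u\|_{L^2_x}.
\]
Combining with the splitting gives $\|u\|_2^2\lesssim Q(u)+\cE^{1/2}\|u\|_2 Q(u)^{1/2}$. Young's inequality then absorbs the $\|u\|_2$ factor: $\cE^{1/2}\|u\|_2Q^{1/2}\leq \tfrac12\|u\|_2^2 + C\cE\, Q$. Rearranging yields $\|u\|_2^2\lesssim(1+\cE)Q(u)$, as desired.

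The main obstacle is the nontrivial step of locating where $|\phi|$ is small. The point is that smallness of the mass $|\phi|^2$ can only occur on a set of measure $O(\cE)$, and there the Sobolev inequality, combined with absorption via Young, trades mass for gradient. The only care needed is that the absorption produces a constant linear in $\cE$, matching the stated factor $(1+\cE[\phi])$.
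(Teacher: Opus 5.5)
Your proof is correct and is essentially the paper's argument: \eqref{coercive-II} follows from covariant integration by parts, and \eqref{coercive-I} from controlling the mass deficit via the potential energy, the Ladyzhenskaya inequality and a Young absorption, which gives the factor $(1+\cE)$ (and you are right that $\|\nabla u\|_{L^2_x}$ on the left should be squared). The only cosmetic difference is that the paper does not introduce the sublevel set $S$: it writes $|\phi|^2 = 1-(1-|\phi|^2)$ and bounds $\int(1-|\phi|^2)|u|^2 \leq \|1-|\phi|^2\|_{L^2_x}\|u\|_{L^4_x}^2$ directly, which is your Chebyshev step in another form (incidentally $|S|\leq 32\cE$ rather than $16\cE$, which is immaterial).
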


\begin{proof}
    The second inequality is immediate from integration-by-parts. For the first inequality, integrating-by-parts and rewriting $|\phi|^2 = 1 - (1 - |\phi|^2)$, we obtain
        \begin{align*}
            \| \nabla u \|_{L^2_x}^2 + \| |\phi| u \|_{L^2_x}^2 
                = \bigl\langle u, (|\phi|^2 - \Delta) u \bigr\rangle
                = \| u \|_{H^1_x}^2 - \int_{\R^2} (1 - |\phi|^2) |u|^2 \, dx.
        \end{align*}
    For the error term on the right-hand side, it follows from Cauchy-Schwartz and Gagliardo-Nirenberg that
        \begin{align*}
            \Bigl|\int_{\R^2} (1 - |\phi|^2) |u|^2 \, dx  \Bigr|
                &\leq \bigl\| 1 - |\phi|^2 \bigr\|_{L^2_x} \, \| u \|_{L^4_x}^2  \\
                &\leq C \, \cE[\phi]^{\frac12} \, \|u \|_{L^2_x} \, \| \nabla u \|_{L^2_x} \\
                &\leq \frac{1}{4} \| u \|_{L^2_x}^2 + C^2 \, \cE[\phi] \, \| \nabla u\|_{L^2_x}^2 ,
        \end{align*}
    for some constant $C > 0$. Collecting the two calculations, we have 
        \begin{align*}
            \| u \|_{H^1_x}^2 
                &\leq \| \nabla u \|_{L^2_x}^2 + \| |\phi| u \|_{L^2_x}^2 + \Bigl|\int_{\R^2} (1 - |\phi|^2) |u|^2 \, dx  \Bigr| \\
                &\leq \Bigl( 1 + C^2 \cdot \cE[\phi] \Bigr) \Bigl(\| \nabla u \|_{L^2_x}^2 + \| |\phi| u \|_{L^2_x}^2 \Bigr) + \frac14 \| u \|_{L^2_x}^2 \\
                &\leq \Bigl( 1 + C^2 \cdot \cE[\phi] \Bigr) \bigl\langle u, (|\phi|^2 - \Delta) u \bigr\rangle + \frac14 \| u \|_{H^1_x}^2.
        \end{align*}
    Moving the second term on the right-hand side to the left, we conclude the result. 
\end{proof}

\begin{remark}
    It is clear from the proof that we can relax the finite-energy assumption and replace the abelian Higgs energy $\cE[A, \phi]$ with the potential energy of the scalar field, 
        \[
            \cV[\phi]
                := \tfrac12 \int_{\R^2} \tfrac14 (1 - |\phi|^2)^2 \, dx. 
        \]
\end{remark}

Returning to the differential identity \eqref{diff-identity1}, applying the coercivity estimate \eqref{coercive-I}-\eqref{coercive-II} to the left-hand side and Gagliardo-Nirenberg interpolation \eqref{GN} to the right-hand side yields the differential inequality
    \[
        \frac{d}{dt} \bigl\| \Bogomolnyi \bigr\|_{L^2_x}^2 + \frac1C \frac{1}{1 +  N} \Bigl( \| \sfD \Bogomolnyi \|_{L^2_x}^2 + \| \Bogomolnyi \|_{L^2_x}^2 \Bigr) 
            \leq C \| \Bogomolnyi \|_{L^2_x}^2 \| \sfD \Bogomolnyi \|_{L^2_x},
    \]
for some uniform constant $C > 1$. Commuting the exponential weight $e^{\gamma t}$ for, say, $\gamma := \tfrac{1}{2C} \tfrac{1}{2 + \pi N}$, 
    \begin{align*}
        \frac{d}{dt} \Bigl( e^{\gamma t} \| \Bogomolnyi \|_{L^2_x}^2 \Bigr) + \frac1{2C} \frac{1}{2 + \pi N} e^{\gamma t}\Bigl( \| \sfD \Bogomolnyi \|_{L^2_x}^2 + \| \Bogomolnyi \|_{L^2_x}^2 \Bigr) \leq C  e^{\gamma t}  \| \Bogomolnyi \|_{L^2_x}^2 \| \sfD \Bogomolnyi \|_{L^2_x},
    \end{align*}
absorbing the commutator with the bad sign into the massive bulk. Integrating this differential inequality on $[0, T]$ gives the energy estimate
    \begin{equation}\label{eq:non-linear-decay}
        \begin{split}
        \sup_{t \in [0, T]} e^{\gamma t} \| \Bogomolnyi \|_{L^2_x}^2 &+ \frac1C \frac{1}{1 + N}\int_0^T e^{\gamma t} \Bigl(\| \sfD \Bogomolnyi \|_{L^2_x}^2 + \| \Bogomolnyi \|_{L^2_x}^2 \Bigr) \, dt \\
            &\qquad \leq \| \Bogomolnyi^{\mathrm{in}} \|_{L^2_x}^2 + C \int_0^T e^{\gamma t} \| \Bogomolnyi \|_{L^2_x}^2 \| \sfD \Bogomolnyi \|_{L^2_x} \, dt,
        \end{split}
    \end{equation}
adjusting the uniform constant $C > 1$ appropriately. 

To conclude the proof, we argue by continuous induction on $T$, setting 
    \[
        \epsilon := \| \Bogomolnyi^{\mathrm{in}} \|_{L^2_x}^2,
    \]
and making the bootstrap assumption 
    \begin{equation}\label{eq:bootstrap-decay}
        \sup_{t \in [0, T]} e^{\gamma t} \| \Bogomolnyi \|_{L^2_x}^2 + \frac1C \frac1{1 + N} \int_0^T e^{\gamma t} \Bigl(\| \sfD \Bogomolnyi \|_{L^2_x}^2 + \| \Bogomolnyi \|_{L^2_x}^2 \Bigr) \, dt
            \leq 2 \epsilon  . 
    \end{equation}
Evidently this holds for $T \ll 1$ by the assumption on the initial data, so it remains to improve the bootstrap assumption to close the continuity argument and conclude \eqref{bootstrap-decay} for all $T > 0$ and thereby the proposition. We proceed by estimating the right-hand side of the energy estimate \eqref{non-linear-decay} using the bootstrap assumption \eqref{bootstrap-decay}, 
    \begin{align*}
        C \int_0^T e^{\gamma t} \| \Bogomolnyi \|_{L^2_x}^2 \| \sfD \Bogomolnyi \|_{L^2_x} \, dt 
            &\leq \frac{C}{2}  \sup_{t \in [0, T]} \| \Bogomolnyi\|_{L^2_x} \cdot \int_0^T e^{\gamma t}  \Bigl( \| \sfD \Bogomolnyi \|_{L^2_x}^2 + \| \Bogomolnyi \|_{L^2_x}^2 \Bigr) \, dt \\
            &\leq C^2 (1 + N) \epsilon^{\frac32} \\
            &\leq \frac12 \epsilon, 
    \end{align*}
choosing $\epsilon_* := \tfrac1{4C^4} \tfrac1{(1 + N)^2}$. Inserting this estimate and the smallness assumption on the initial data \eqref{small-tension} into the energy estimate \eqref{non-linear-decay}, we improve the bootstrap assumption \eqref{bootstrap-decay} to
    \begin{align*}
        \sup_{t \in [0, T]}\| e^{\gamma t} \Bogomolnyi \|_{L^2_x}^2 + \frac1C \frac1{1 + N} \int_0^T e^{\gamma t} \Bigl(\| \sfD \Bogomolnyi \|_{L^2_x}^2 + \| \Bogomolnyi \|_{L^2_x}^2 \Bigr) \, dt 
            \leq \frac32 \epsilon,
    \end{align*}
closing the continuity argument and thereby the proof.

\subsection{$(H^1\times L^2)$-convergence of $(A, \phi)$}\label{sec:H1-L2}

Imposing the temporal gauge $A_t = 0$, the abelian Higgs gradient flow in self-dual form \eqref{SD-AHG}-\eqref{SD-Amp} can be put in the schematic form 
    \begin{equation}\label{eq:schematic-SD-AHG}
        \partial_t (A, \phi) 
            = \phi \cdot \Bogomolnyi + \sfD \Bogomolnyi. 
    \end{equation}
Using this identity, we can easily convert decay of the Bogomoln'yi tension field into difference bounds for the configuration. To handle various coefficients, such as the scalar field $\phi$, which appear in the calculations, it will be convenient to record the following interpolation lemma,

\begin{lemma}\label{lem:summation}
    Let $\gamma, L > 0$, and suppose $f \in L^1_{t, \loc} ([0, \infty) \to \R^+)$, then for all $t_0 \geq 0$, we have
        \begin{equation}
            \int_{t_0}^\infty e^{- \gamma t} f(t) \, dt
                \lesssim \frac{1}{\gamma L} e^{-\gamma t_0} \sup_{\substack{I \subseteq [0, \infty) \\ |I| = L}} \| f \|_{L^1_t (I)}.
        \end{equation}
\end{lemma}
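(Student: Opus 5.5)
The plan is to decompose $[t_0, \infty)$ into consecutive intervals of length $L$ and sum the resulting geometric series. For $k \in \N_0$ set $I_k := [t_0 + kL, t_0 + (k+1)L)$. These intervals are disjoint, they cover $[t_0, \infty)$, and each is a subinterval of $[0, \infty)$ of length exactly $L$. Writing $M := \sup_{|I| = L} \| f \|_{L^1_t (I)}$, we may assume $M < \infty$, since otherwise there is nothing to prove. Because $f \geq 0$, on each $I_k$ we can bound the weight by its value at the left endpoint:
\[
    \int_{I_k} e^{-\gamma t} f(t) \, dt \leq e^{-\gamma (t_0 + kL)} \| f \|_{L^1_t (I_k)} \leq e^{-\gamma t_0} e^{-\gamma k L} M.
\]

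Next we sum over $k$, using monotone convergence for the nonnegative integrand. This gives
\[
    \int_{t_0}^\infty e^{-\gamma t} f(t) \, dt \leq e^{-\gamma t_0} M \sum_{k \geq 0} e^{-\gamma k L} = \frac{e^{-\gamma t_0} M}{1 - e^{-\gamma L}}.
\]

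The only remaining step is to compare $(1 - e^{-\gamma L})^{-1}$ with $(\gamma L)^{-1}$. Write $x := \gamma L > 0$. When $x \leq 1$, convexity of the exponential gives $1 - e^{-x} \geq (1 - e^{-1}) x$. When $x \geq 1$, we have $1 - e^{-x} \geq 1 - e^{-1}$. Together these yield
\[
    \frac{1}{1 - e^{-\gamma L}} \lesssim \frac{1}{\gamma L} + 1,
\]
which is $\lesssim (\gamma L)^{-1}$ precisely when $\gamma L \lesssim 1$.

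I expect this comparison to be the one genuine obstacle. For large $\gamma L$ the stated bound cannot hold as written: taking $f \equiv 1$, the left side equals $\gamma^{-1} e^{-\gamma t_0}$, while the right side is $(\gamma L)^{-1} e^{-\gamma t_0} \cdot L = \gamma^{-1} e^{-\gamma t_0}$. So the bound is consistent in this example. However, the constant $(1 - e^{-\gamma L})^{-1}$ from the block decomposition is only bounded by $(\gamma L)^{-1}$ in the regime $\gamma L \lesssim 1$. In the paper this is harmless, because $\gamma \ll_N 1$ and $L = t_* \lesssim 1$. I would therefore state the result under the implicit assumption $\gamma L \leq 1$, or with the harmless constant $\max\{1, (\gamma L)^{-1}\}$. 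Alternatively, I would refine the block estimate by averaging over the choice of starting point of the partition: integrating the bound over shifts $s \in [0, L]$ recovers the sharp factor $(\gamma L)^{-1}$ up to an absolute constant whenever $\gamma L \lesssim 1$.
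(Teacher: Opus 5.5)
Your proof is the paper's proof: split $[t_0,\infty)$ into consecutive blocks of length $L$, bound $e^{-\gamma t}$ by its value at each left endpoint, and sum the geometric series. The paper also silently uses $\sum_{n\ge 0} e^{-n\gamma L}\lesssim (\gamma L)^{-1}$, so it likewise needs $\gamma L\lesssim 1$, which is harmless since $\gamma\ll_N 1$ and $L=t_*$. Your caveat is correct: your $f\equiv 1$ example does not exhibit the failure, but taking $f$ to have unit mass concentrated in $[t_0,t_0+\delta]$ and letting $\delta\to 0$ shows the bound genuinely fails for $\gamma L\gg 1$, so the shift-averaging refinement is unnecessary.
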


\begin{proof}
    We interpolate between the uniform boundedness on unit-time scales and decay, dividing $[0, \infty)$ into sub-intervals $I_n := t_0 + [nL, (n + 1) L]$ and estimating 
        \begin{align*}
            \int_{t_0}^\infty e^{- \gamma t} f(t) \, dt 
                &\leq e^{-\gamma t_0} \sum_{n = 0}^\infty e^{- n \gamma L} \int_{I_n} f(t) \, dt 
                \lesssim \frac{1}{\gamma L} e^{-\gamma t_0} \sup_{\substack{I \subseteq [0, \infty) \\ |I| = L}} \| f \|_{L^1_t (I)},
        \end{align*}
    as desired. 
\end{proof}

\begin{proof}[Proof of $L^2$-convergence to $\cM^N$]
    Our goals here are two-fold; first, we claim the $L^2$-difference bound
        \[
            \| (A, \phi)(t_2) - (A, \phi)(t_1) \|_{L^2_x}^2
                    \lesssim_N e^{-\gamma t} \| \Bogomolnyi^{\mathrm{in}} \|_{L^2_x}^2 .
        \]
    This would imply that the flow $\{ (A(t), \phi(t)) \}_t$ is Cauchy with respect to the $L^2$-metric as $t \to \infty$, from which we may extract a limiting configuration $(A^\infty, \phi^\infty)$. By decay of the Bogomoln'yi tension field \eqref{bogomolnyi-decay}, this limit is a weak solution to the Bogomoln'yi equations \eqref{bogomolnyi} -- our second order of business then would be to show that the topological degree is $N$, and thereby residing in the moduli space $(A^\infty, \phi^\infty) \in \cM^N$.

    We accomplish both goals in concert by estimating the $L^1_t L^2_x$-norm of the time-derivatives. Performing this estimate at the level of the self-dual formulation of the equation \eqref{schematic-SD-AHG},
    \begin{align*}
        \int_{t_1}^{t_2} \| \partial_t (A, \phi) \|_{L^2_x} \, dt
            &\lesssim \int_{t_1}^{t_2} \| \phi \|_{L^\infty_x} \| \Bogomolnyi \|_{L^2_x} + \| \sfD \Bogomolnyi \|_{L^2_x} \, dt \\
            &\lesssim \Bigl( \int_{t_1}^{t_2} e^{-\frac12 \gamma t} \| \phi \|_{L^\infty_x} \, dt\Bigr) \Bigl(\sup_{t \in [t_1, t_2]} e^{\gamma t} \| \Bogomolnyi(t) \|_{L^2_x}^2 \Bigr)^\frac12\\
                &\qquad  +  \Bigl( \int_{t_1}^{t_2}   e^{-\gamma t} \, dt  \Bigr)^\frac12 \Bigl( \int_{t_1}^{t_2} e^{\gamma t} \| \sfD \Bogomolnyi \|_{L^2_x}^2 \, dt\Bigr)^{\frac12} 
            \lesssim_N e^{-\frac\gamma2 t_1} \| \Bogomolnyi^{\mathrm{in}} \|_{L^2_x},
    \end{align*}
    using the summability bound from Lemma \ref{lem:summation} and the unit-time scale $L^1_t L^\infty_x$-bound from Proposition \ref{cor:Lp-Linfty}, and applying Bogomoln'yi tension field decay \eqref{bogomolnyi-decay}. Then the $L^2$-difference bound is immediate from the fundamental theorem of calculus, while the preservation of topological degree $N$ in the limit as $t \to \infty$ follows from Proposition \ref{lem:vorticity-II}. 
\end{proof}

\begin{proof}[Proof of $\dot H^1$-convergence of $A$]
    To estimate the differences of the gradient of the gauge potential along the gradient flow, it suffices by the Helmholtz decomposition to estimate that the differences of the divergence and the differences of the curl, i.e. we are reduced to estimating the right-hand side of
        \[
            \| \nabla A (t) - \nabla A^\infty \|_{L^2_x} 
                \lesssim \| \partial^\ell A_\ell (t) - \partial^\ell A_\ell^\infty \|_{L^2_x} + \| F_{12} (t) - F_{12}^\infty \|_{L^2_x}.
        \]
    
    For the divergence, a straightforward calculation using the equation \eqref{AHG} gives
        \[
            \partial^\ell F_{t \ell} = \Im(\overline \phi \bfD_t \phi).
        \] 
    Integrating this expression on $[t_1, t_2]$ in temporal gauge, the left-hand side gives the differences of the divergence by the fundamental theorem of calculus, 
        \begin{align*}
            \partial^\ell A_\ell (t) - \partial^\ell A_\ell^\infty 
                = \int_{t}^{\infty} \Im(\overline \phi \bfD_t \phi) \, dt. 
        \end{align*}
    
    For the curl, we can recast the difference of the magnetic fields in terms of the charge density and the Gauss tension fields by definition, and then rewrite the difference of the charge densities using the fundamental theorem of calculus, 
        \begin{align*}
            F_{12} (t) - F_{12}^\infty 
                &= \bigl( \Gauss(t) - \Gauss^\infty  \bigr) +  \tfrac12\bigl( |\phi(t)|^2 - |\phi^\infty|^2 \bigr)  \\
                &=  \Gauss(t) - \int_{t}^{\infty} \Re(\overline \phi \bfD_t \phi) \, dt'. 
        \end{align*}
    We can immediately handle the Gauss tension fields via the decay estimate \eqref{bogomolnyi-decay}, so it remains to handle the integral of $\phi \cdot \bfD_t \phi$ in both expessions above. We rewrite it using the equation \eqref{schematic-B} and estimate the $L^2_x$-norm thereof, 
        \begin{equation}\label{eq:phi-Dt}
        \begin{split}
            \int_{t}^{\infty} \| \phi \cdot \bfD_t \phi \|_{L^2_x} \, dt' 
                &\lesssim \int_{t}^\infty \| \phi \|_{L^\infty_x} \| \sfD \Bogomolnyi \|_{L^2_x} + \| \phi \|_{L^\infty_x}^2 \| \Bogomolnyi \|_{L^2_x} \, dt' \\
                &\lesssim  \Bigl( \int_{t}^{\infty}  e^{-\gamma t'} \| \phi \|_{L^\infty_x}^2 \, dt  \Bigr)^\frac12 \Bigl( \int_{t}^{\infty} e^{\gamma t'} \| \sfD \Bogomolnyi (t')\|_{L^2_x}^2 \, dt';\Bigr)^{\frac12} \\
                    &\qquad + \Bigl( \int_{t}^\infty e^{-\frac12 \gamma t'} \| \phi \|_{L^\infty_x}^2 \, dt'\Bigr) \Bigl(\sup_{t \in [t, \infty]} e^{\gamma t'} \| \Bogomolnyi(t') \|_{L^2_x}^2 \Bigr)^\frac12 
            \lesssim_N e^{-\frac\gamma2 t} \| \Bogomolnyi^{\mathrm{in}} \|_{L^2_x},
        \end{split}
        \end{equation}
     using the summability bound from Lemma \ref{lem:summation} and the unit-time scale $L^2_t L^\infty_x$-bounds from Proposition \ref{cor:Lp-Linfty}, and applying Bogomoln'yi tension field decay \eqref{bogomolnyi-decay}. 
\end{proof}

\subsection{$\dot H^1$-convergence of scalar field} \label{sec:H1-phi}

Our last order of business is to upgrade the $(H^1 \times L^2)$-convergence \eqref{main-converge} to a $\dot H^1$-convergence bound for the scalar field \eqref{main-converge-2} under the additional assumption that the initial data obeys $A^{\mathrm{in}} \in L^p (\R^2)$ for $2 < p < \infty$. To better leverage the smoothing effects of the gradient flow and nail down the size of the magnetic potential, it is convenient to pass to Coulomb gauge \eqref{coulomb} at $t = \infty$ and integrate backwards-in-time from the limiting configuration. To accomplish this first step, set
    \[
        \chi 
            := - \Delta^{-1} \partial^\ell A_\ell^\infty,
    \]
then the gauge-transformed configuration $(A + d \chi, e^{i \chi}\phi)$ satisfies the Coulomb gauge at $t = \infty$. To obtain a bound in the original gauge, we need to estimate effects of the gauge transformation, 

\begin{lemma}[$H^1$-norm of gauge-transformed field]\label{lem:transform}
    Let $\chi \in L^1_\loc (\R^2 \to \R)$ be a gauge-transformation with regularity $\nabla \chi \in L^p (\R^2)$ for $2 < p \leq \infty$, and suppose $u \in H^1 (\R^2)$. Then 
        \begin{equation}
            \| u e^{i \chi}\|_{H^1_x}
                \lesssim \Bigl( 1 + \| \nabla \chi \|_{L^p_x} \Bigr) \| u \|_{H^1_x}.
        \end{equation}
\end{lemma}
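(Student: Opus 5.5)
The plan is to bound the $L^2$-norm and the gradient of $u e^{i\chi}$ separately. For the zeroth-order part, $|u e^{i\chi}| = |u|$ pointwise, so $\| u e^{i\chi}\|_{L^2_x} = \| u \|_{L^2_x}$ and nothing more is needed. For the gradient, apply the product rule (in the distributional sense: $\chi \in L^1_\loc$ with $\nabla\chi \in L^p$ gives $\chi \in W^{1,p}_\loc$, so $e^{i\chi}$ is locally Sobolev with $\nabla e^{i\chi} = i e^{i\chi}\nabla\chi$). This gives
\[
	\nabla (u e^{i\chi}) = e^{i\chi} \nabla u + i u e^{i\chi} \nabla \chi,
\]
and therefore
\[
	\|\nabla(u e^{i\chi})\|_{L^2_x} \leq \|\nabla u\|_{L^2_x} + \| u \nabla\chi\|_{L^2_x}.
\]

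The only real term is $\| u \nabla \chi\|_{L^2_x}$. By H\"older's inequality with $\tfrac12 = \tfrac1p + \tfrac1q$, where $q = \tfrac{2p}{p-2} \in [2, \infty)$ because $2 < p \leq \infty$, we get $\| u \nabla\chi\|_{L^2_x} \leq \|\nabla \chi\|_{L^p_x} \| u\|_{L^q_x}$. The non-magnetic Gagliardo--Nirenberg--Sobolev inequality, i.e. \eqref{GN} with $A = 0$, then gives $\|u\|_{L^q_x} \lesssim \|u\|_{L^2_x}^{2/q}\|\nabla u\|_{L^2_x}^{1-2/q} \leq \|u\|_{H^1_x}$. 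At the endpoint $p = \infty$ we have $q = 2$ and the bound is trivial. The implicit constant depends only on $p$.

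Combining the two estimates yields $\|u e^{i\chi}\|_{H^1_x} \lesssim (1 + \|\nabla\chi\|_{L^p_x}) \|u\|_{H^1_x}$. The one point needing care is justifying the chain rule for $e^{i\chi}$ at low regularity. To handle it, I would first prove the bound for $u \in C^\infty_c$ and $\chi$ mollified, where everything is classical, and then pass to the limit. Mollification preserves $\|\nabla\chi_\epsilon\|_{L^p_x} \leq \|\nabla\chi\|_{L^p_x}$, and $e^{i\chi_\epsilon} \to e^{i\chi}$ almost everywhere (along a subsequence) with modulus one. Hence $u e^{i\chi_\epsilon} \to u e^{i\chi}$ in $L^2$, and the uniform $H^1$ bound transfers to the limit by weak lower semicontinuity. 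Density of $C^\infty_c$ in $H^1$ then finishes the argument.
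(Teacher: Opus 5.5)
Your proposal is correct and follows essentially the same route as the paper: gauge invariance of the $L^2$-norm, the product rule $\nabla(ue^{i\chi}) = e^{i\chi}(\nabla u + iu\nabla\chi)$, H\"older with $\tfrac12 = \tfrac1p + \tfrac1q$, and the Sobolev embedding $H^1 \hookrightarrow L^q$. Your mollification argument justifying the chain rule for $e^{i\chi}$ at low regularity is a sound extra detail that the paper leaves implicit.
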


\begin{proof}
    The $L^2$-norm is gauge-invariant, so it remains to calculate the derivative of the gauge-transformed field. By the product rule, 
        \[
            \nabla \bigl( u e^{i \chi} \bigr)
			= e^{i \chi} \bigl( \nabla u + i u \nabla \chi \bigr).
        \]
    We bound this by 
        \begin{align*}
            \bigl\| \nabla \bigl( u e^{i \chi} \bigr) \bigr\|_{L^2_x}
                &\leq \| \nabla u \|_{L^2_x} +  \| u \|_{L^q_x} \bigl\| \nabla \chi \bigr\|_{L^p_x} \\
                &\lesssim \Bigl( 1 + \| \nabla \chi \|_{L^p_x} \Bigr) \| u \|_{H^1_x},
        \end{align*}
    using the triangle inequality and H\"older's inequality with $\tfrac12 = \tfrac1p + \tfrac1q$ in the first line, and and Sobolev embedding in the second line. This completes the proof. 
\end{proof}

By boundedness of the Riesz transforms, the difference bound \eqref{main-converge}, and Sobolev embedding, 
    \begin{align*}
        \| \nabla \chi \|_{L^p_x} 
            &\lesssim \bigl\| \nabla\Delta^{-1} \partial^\ell (A_\ell^\infty - A_\ell^{\mathrm{in}}) \bigr\|_{L^p} + \bigl\| \nabla\Delta^{-1} \partial^\ell  A^{\mathrm{in}}_\ell \bigr\|_{L^p_x} \lesssim_N 1 + \bigl\| A^{\mathrm{in}} \bigr\|_{L^p} ,
    \end{align*}
so to prove \eqref{main-converge-2}, it suffices from \eqref{main-converge} and Lemma \ref{lem:transform} to prove 
    \begin{equation}
        \bigl\| \nabla \phi(t) - \nabla \phi^\infty \bigr\|_{L^2}
            \lesssim_N e^{-\frac\gamma2 t}\bigl\| \Bogomolnyi^{\mathrm{in}} \bigr\|_{L^2_x} 
    \end{equation}
for $(A^\infty, \phi^\infty)$ satisfying the Coulomb gauge \eqref{coulomb}. 

By ellipticity, it suffices to estimate the Cauchy-Riemann derivative of $\phi$. That is, we are reduced to estimating the right-hand side of
    \[
        \| \nabla \phi (t) - \nabla \phi^\infty \|_{L^2_x} 
            \lesssim \| \partial_{\overline z} \phi (t) - \partial_{\overline z} \phi^\infty \|_{L^2_x}.
    \]
    Rewriting the Cauchy-Riemann derivatives in terms of in terms of their covariant counterparts, noting that $(A^\infty, \phi^\infty)$ solves \eqref{bogomolnyi} and rewriting the lower-order terms via the fundamental theorem of calculus, 
        \begin{align*}
            \partial_{\overline z} \phi (t) - \partial_{\overline z} \phi^\infty 
                &= \bigl( \bfD_{\overline z} \phi (t) - \bfD_{\overline z} \phi^\infty \bigr) - i \bigl( A_{\overline z}^\infty \phi^\infty - A_{\overline z} (t) \phi(t) \bigr)\\
                &= \bfD_{\overline z} \phi(t) - i \int_{t}^\infty A_{\overline z} (t') \cdot \partial_t \phi(t') + \partial_t A_{\overline z} (t') \cdot \phi(t') \,  dt'.
        \end{align*}
    We can immediately estimate the first term on the right via the Bogomoln'yi tension field decay \eqref{bogomolnyi-decay}, so it remains to handle the integral.  The term $\partial_t A_{\overline z} \cdot \phi$ may be handled by \eqref{phi-Dt} from the previous proof, so we focus on the term $A_{\overline z} \cdot \partial_t \phi$. As a matter of fact, we may argue fairly similarly with this term, rewriting using the equation \eqref{schematic-B}, and then amply applying Cauchy-Schwarz, 
        \begin{align*}
            \int_t^\infty \bigl\| A_{\overline z} \cdot \partial_t \phi  \bigr\|_{L^2_x} \, dt 
                &\lesssim \int_t^\infty  \| A \|_{L^\infty_x} \bigl( \| \sfD \Bogomolnyi \|_{L^2_x} + \| \phi \|_{L^\infty_x} \| \Bogomolnyi \|_{L^2_x} \bigr) \, dt \\
                &\lesssim \Bigl( \int_t^\infty e^{-\gamma t'} \| A \|_{L^\infty_x}^2 \, dt \Bigr)^{\frac12} \Bigl( \int_t^\infty e^{\gamma t'} \| \sfD \Bogomolnyi \|_{L^2_x}^2 \, dt' \Bigr)^{\frac12} \\
                    &\qquad + \Bigl( \int_t^\infty e^{-\frac12 \gamma t'} \bigl( \| A \|_{L^\infty_x}^2 + \| \phi\|_{L^\infty_x}^2 \bigr)\, dt'\Bigr) \Bigl( \sup_{t' \in [t_1, t_2]} e^{\gamma t} \| \Bogomolnyi(t') \|_{L^2_x}^2 \Bigr)^{\frac12}\\
                &\lesssim_N e^{-\frac\gamma2 t} \| \Bogomolnyi^{\mathrm{in}} \|_{L^2_x},
        \end{align*}
    using the unit-time scale $L^2_t L^\infty_x$-bounds for $\phi$ from Propositions \ref{cor:Lp-Linfty}, a unit-time scale $L^2_t L^\infty_x$-bound for $A$ which we will justify below, then summing via Lemma \ref{lem:summation} and leveraging the decay \eqref{bogomolnyi-decay} to conclude. 

    It remains to justify 
         \begin{equation}\label{eq:claim-A}
            \| A \|_{L^2_t L^\infty_x ([t, t + t_*])} 
                \lesssim_N 1.
        \end{equation}
    Rewriting the magnetic potential using the fundamental theorem of calculus,
        \begin{align*}
            A_{\overline z} (t)
                = A_{\overline z}^\infty - \int_t^\infty \partial_t A_{\overline z} (t') \, dt'.
        \end{align*}
    Writing $A^\infty$ in terms of its curvature via the Biot-Savart law, and using the boundedness \eqref{vortex-Linfty} and quantisation \eqref{quantised} of the magnetic field, we can bound 
        \[
            \|A^\infty \|_{L^\infty_x} 
                \lesssim \| F_{12}^\infty \|_{L^1_x} + \| F_{12}^\infty \|_{L^\infty_x} \lesssim 1 + N. 
        \]
    For the contribution of the integral, we claim that in fact it is $L^\infty_{t, x}$ on $[t_*, \infty)$, and is $L^p_t L^\infty_x$ on $[0, t_*]$ for every $1 \leq p < \infty$. 
    
    For $t \in [t_*, \infty)$, we rewrite the integrand using the equation \eqref{schematic-SD-AHG}, and partition into sub-intervals $I$ of length $\tfrac12 t_*$. To estimate on each sub-interval, we run the gradient flow starting from $I - \tfrac12 t_*$ and use the parabolic smoothing alotted by Proposition \ref{prop:smooth-II},
        \begin{align*}
            \int_{t}^\infty \| \partial_t A(t') \|_{L^\infty_x} \, dt'
                &\lesssim  \sum_{n = 1}^\infty \int_{[\frac{n}{2} t_*, \frac{n + 1}{2} t_*]} \| \phi(t') \|_{L^\infty_x} \| \Bogomolnyi (t') \|_{L^\infty_x} + \| \sfD \Bogomolnyi(t') \|_{L^\infty_x} \, dt'\\ 
                &\lesssim \sum_{n = 1}^\infty \| \phi \|_{L^4_t L^\infty_x ([\frac{n}{2} t_*, \frac{n + 1}{2}])} \| |t"|^{-\frac12} \|_{L^{4/3}_{t"} ([\frac12 t_*, t_*])}  \Bigl( \sup_{t" \in [\frac12 t_* , t_*]} |t"|^{\frac12} \| \Bogomolnyi (\tfrac{n - 1}{2} t_* + t") \|_{L^\infty_x} \Bigr)\\
                    &\qquad + \Bigl( \int_{[\frac12 t_*, t_*]} |t"|^{-1} \, dt" \Bigr) \Bigl( \sup_{t" \in [\frac12 t_*, t_*]} |t"|  \| \sfD \Bogomolnyi(\tfrac{n - 1}{2} t_* + t") \|_{L^\infty_x} \Bigr) \\
                &\lesssim_N \sum_{n = 1}^\infty \| \Bogomolnyi (\tfrac{n - 1}{2} t_*) \|_{L^2_x} \\
                &\lesssim_N \sum_{n = 1}^\infty e^{- \gamma \frac{n - 1}{2} t_*} \lesssim_N 1, 
        \end{align*}
using the smoothing bound \eqref{B-smoothing} and the unit-time scale $L^4_t L^\infty_x$-bound for the scalar field \eqref{Lp-Linfty} in the third line, and summing via the decay \eqref{bogomolnyi-decay}. 

For $t \in [0, t_*]$, we run the gradient flow starting from $t = 0$, and a similar argument yields
    \begin{align*}
        \int_t^{t_*} \| \partial_t A (t') \|_{L^\infty_x} \, dt' 
            &\lesssim \int_t^{t_*} \| \phi (t') \|_{L^\infty_x} \| \Bogomolnyi (t') \|_{L^\infty_x} + \| \sfD \Bogomolnyi (t') \|_{L^\infty_x} \, dt'\\
            &\lesssim \| \phi \|_{L^4_{t'} L^\infty_x ([0, t_*])} \| |t'|^{-\frac12} \|_{L^{4/3}_{t'} ([0, t_*])} \Bigl( \sup_{t' \in [0, t_*]} |t"|^{\frac12} \| \Bogomolnyi (t') \|_{L^\infty_x} \Bigr)\\
                &\qquad + \Bigl( \int_t^{t_*} |t'|^{-1} \, dt" \Bigr) \Bigl( \sup_{t' \in [0, t_*]} |t'|  \| \sfD \Bogomolnyi(t') \|_{L^\infty_x} \Bigr)\\
            &\lesssim_N 1 + \log(t_*/t).
    \end{align*}
This is clearly $L^2_t$-integrable on $[0, t_*]$, so we conclude \eqref{claim-A}.

\appendix
\section{Well-posedness theory}\label{sec:gwp}

In this appendix, we resolve a subtle gap in the literature concerning the global well-posedness of the abelian Higgs gradient flow (for arbitrary coupling constant $\lambda > 0$),
	\begin{equation}\label{eq:app-AHG}\tag{AHG}
		\begin{split}
			\bfD_t \phi - \bfD^j \bfD_j \phi
				&= \tfrac\lambda2(1 - |\phi|^2) \phi, \\
			F_{tj}
				&= \Im(\overline \phi \bfD_j \phi) + \partial^\ell F_{\ell j}, 
		\end{split}
	\end{equation}
with initial data 
	\[
		(A, \phi)_{|t = 0}
			= (A^{\mathrm{in}}, \phi^{\mathrm{in}}),
	\]
in the \textit{energy space}, 
	\[
		\frE 
			:= \bigl\{ (A, \phi) \in H^1_{\loc} (\R^2) : \cE[A, \phi] < \infty  \text{ and } A \in L^p (\R^2) \text{ for some $2 < p < \infty$} \bigr\}.
	\]
Demoulini-Stuart \cite[Section 4]{DemouliniStuart1997} prove global well-posedness for \eqref{app-AHG} on the scale of $C^{k, \alpha}$-spaces. The global well-posedness in the affine space $(A^{\lambda, N}, \phi^{\lambda, N}) + H^1 (\R^2)$ linearised about a vortex is taken as an assumption in Gustafson \cite{Gustafson2002}. Neither, at least \textit{a priori}, explore a dense\footnote{of course, the precise meaning of ``dense'' here is left unclear.} subset of the energy space, and so are unsatisfactory for the generality we are interested in. On the other hand, there are rich subtleties stemming from the non-linear character of the phase space which would require a whole article to explore in detail. For this reason, we provide a reasonably self-contained but abridged account of the well-posedness theory for \eqref{AHG},

\begin{theorem}[Global well-posedness of abelian Higgs gradient flow]
	For any finite-energy configuration $ (A^{\mathrm{in}}, \phi^{\mathrm{in}}) \in \frE$, there exists a finite-energy configuration $(A, \phi)$ on $[0, \infty) \times \R^2$ solving the Cauchy problem for the abelian Higgs gradient flow \eqref{app-AHG} in temporal gauge $A_t = 0$ with initial data $(A, \phi)_{|t = 0} = (A^{\mathrm{in}}, \phi^{\mathrm{in}})$. Furthermore, this solution satisfies the energy identity,
	\begin{equation}\label{eq:energy-identity}
		\frac{d}{dt} \cE[A, \phi] 
			= - \int_{\R^2} | \bfD_t \phi|^2 + |F_{t1}|^2 + |F_{t2}|^2 \, dx.
	\end{equation}
\end{theorem}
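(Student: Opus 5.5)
My plan is to prove global well-posedness in three stages: local existence for regular data, global continuation via a priori bounds, and extension to the full energy space by approximation. For the first stage, temporal gauge makes \eqref{app-AHG} only degenerately parabolic in $A$: Ampère's law reads $\partial_t A_j = \Im(\overline\phi \bfD_j \phi) + \partial^\ell F_{\ell j}$, and the operator $\partial^\ell F_{\ell j} = \Delta A_j - \partial_j \partial^\ell A_\ell$ gives no smoothing to the longitudinal part of $A$. I will therefore use the DeTurck-type trick: first solve the strictly parabolic system in the caloric/Coulomb-type gauge $A_t = \partial^\ell A_\ell$. In that gauge the equations become
\[
\partial_t A_j - \Delta A_j = \Im(\overline\phi \bfD_j\phi), \qquad \partial_t\phi - \Delta\phi = -2iA^j\partial_j\phi - i(\partial^j A_j)\phi - |A|^2\phi + i A_t\phi + \tfrac\lambda2(1-|\phi|^2)\phi,
\]
a semilinear heat system. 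I will write $\phi = \phi_0 + \psi$ with $\phi_0$ a fixed smooth reference field, for instance a mollification of $\phi^{\mathrm{in}}$, so that $|\phi_0|\to 1$ in an averaged sense. I will then solve for $(A,\psi)$ by a contraction in $C_t(H^1) \cap L^2_t H^2$ on a short time interval. The admissibility condition $A\in L^p$ is what permits this: via the Biot–Savart/Helmholtz decomposition, it lets me control $\|A\|_{L^p}$ through $F_{12}$ and $\partial^\ell A_\ell$, which keeps the cubic terms $|A|^2\phi$ perturbative.

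Returning to temporal gauge is straightforward. Setting $\chi(t,x) = -\int_0^t A_t\,ds$ and applying the gauge transform $(A+d\chi, e^{i\chi}\phi)$ produces a temporal-gauge solution with the same initial data. This works because \eqref{app-AHG} is gauge covariant and $\chi(0)=0$. Uniqueness in temporal gauge follows because any two temporal-gauge solutions differ, after passing to the parabolic gauge, by a time-independent transformation, and that transformation is fixed by the data.

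For global existence, I will prove the energy identity \eqref{energy-identity} for regular solutions. The computation tests the $\phi$-equation against $\overline{\bfD_t\phi}$ and Ampère's law against $F_{tj}$, then integrates by parts; the justification uses the $L^2_tH^2$ regularity together with cutoff arguments of the kind used in Proposition \ref{lem:vorticity-II}. Energy monotonicity then bounds all gauge-invariant quantities uniformly. The covariant smoothing estimates of Section \ref{sec:smooth} (Propositions \ref{prop:smooth-I}, \ref{prop:smooth}, Corollary \ref{cor:Lp-Linfty}) go through for general $\lambda$ with only cosmetic changes, and they give uniform control on every interval of length $t_*(\cE)$. Since $t_*$ depends only on the initial energy, the solution can be continued indefinitely. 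The $L^p$ norm of $A$ grows at most linearly in time, because $\|\partial_t A\|_{L^2}$ is square integrable in time and $A\in L^p$ is propagated by the parabolic-gauge construction.

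The main obstacle is general data in $\frE$, which is only $H^1_\loc$ with $|\phi|$ possibly vanishing, and is not in any affine space about a vortex. I would first approximate $(A^{\mathrm{in}},\phi^{\mathrm{in}})$ by smooth configurations in $\frE$ that converge in energy, using mollification together with Lemma \ref{lem:vorticity-I}, which keeps the degree fixed. I would then pass to the limit using the uniform estimates: weak compactness and local strong compactness from the parabolic smoothing bounds, plus lower semicontinuity of the energy. This produces a finite-energy solution. The energy identity in the limit, as an equality and not merely an inequality, requires strong convergence of $\bfD_t\phi$ and $F_{tj}$ in $L^2_{t,x}$. I would try to obtain it from a difference estimate between approximate solutions, carried out in the parabolic gauge.
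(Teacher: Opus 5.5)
There is a genuine gap where energy-class data have to be placed in a linear space. Your Stage~1 contraction puts $A$ itself in $C_tH^1\cap L^2_tH^2$, but $A\notin L^2$ for the data of interest. If $A\in L^2$ and $F_{12}\in L^1$, testing $F_{12}$ against $\chi(x/R)$ and letting $R\to\infty$ forces $\int F_{12}\,dx=0$. So no vortex in $\cM^N$ with $N\neq0$ (Theorem~\ref{thm:JT}) lies in your space.

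The same problem affects $\phi$. Only $\bfD\phi^{\mathrm{in}}$ is in $L^2$, not $\nabla\phi^{\mathrm{in}}$, so a plain mollification $\phi_0$ does not obviously give $\phi^{\mathrm{in}}-\phi_0\in H^1$.

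Admissibility also does not do what you claim. $F_{12}$ and $\partial^\ell A_\ell$ in $L^2$ control only $\|\nabla A\|_{L^2}$, and $\dot H^1(\R^2)\not\hookrightarrow L^p$. So you get no quantitative bound on $\|A\|_{L^p}$ with which to make $|A|^2\phi$ perturbative.

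The missing idea is a smooth reference configuration for \emph{both} fields, with $L^\infty$ bounds depending only on $\cE$, from which the data differ by an $H^1$ function. The paper builds this in Proposition~\ref{prop:decomp}:
\begin{enumerate}
\item pass to Coulomb gauge;
\item run the covariant linear heat flow in caloric gauge, which preserves Coulomb gauge, for time $\underline s\sim(1+\cE)^{-1}$;
\item bound the output in $L^\infty$, and $\underline A$ in $L^4$, via the modified Biot--Savart law exploiting $\omega(A,\phi)\in L^1$ (Lemma~\ref{lem:Linfty-config});
\item check that the difference $(a,u)$ lies in $H^1$.
\end{enumerate}
Proposition~\ref{prop:lipschitz} then makes $\cE$ locally Lipschitz in $(a,u)$. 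The DeTurck-gauge Picard iteration runs on $(a,u)$ directly for every datum in $\frE$, and the energy identity passes from smooth $(a,u)$ to $H^1$ by density.

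Your Stage~3 presupposes this step rather than supplying it. Smooth approximants ``converging in energy'' is exactly the density statement that the decomposition plus Lipschitz continuity gives, by mollifying only $(a,u)$. Mollifying $(A,\phi)$ directly instead produces the commutator $(A\phi)*\rho-(A*\rho)(\phi*\rho)$ and a defect in $|\phi*\rho|^2$, neither of which you control; Lemma~\ref{lem:vorticity-I} says nothing about this. Once the decomposition is available, your ``difference estimate'' is just the contraction estimate, and the compactness detour is unnecessary.

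If you keep the compactness route anyway, equality in the energy identity does not need strong convergence of $\bfD_t\phi$ and $F_{tj}$. The limit inherits the smoothing bounds, so the identity can be verified on $[t_0,t_1]$ for $t_0>0$. Then $\cE(t)\to\cE(0)$ follows from the energy inequality together with weak lower semicontinuity, since $\partial_t(A,\phi)\in L^2_{t,x}$ in temporal gauge.

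Two smaller points.
\begin{enumerate}
\item That $t_*$ depends only on $\cE$ does not by itself restart a contraction whose lifespan depends on the gauge-dependent size $\|(a,u)\|_{H^1}$. You need a continuation criterion such as the paper's $\int_0^T\|\phi\|_{L^\infty}^2\,dt<\infty$, derived from Duhamel bounds on $(a,u)$.
\item $\partial_tA\in L^2_{t,x}$ only gives $A(t)-A(0)\in L^2$. To propagate $A\in L^p$ you also need $\nabla(A(t)-A(0))\in L^2$, for example from $\partial_t\partial^\ell A_\ell=\Im(\overline\phi\bfD_t\phi)$ and the $L^2_tL^\infty_x$ bound on $\phi$.
\end{enumerate}
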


Rather than directly work in temporal gauge, under which the equation for $A$ is an ODE, and formulate well-posedness in the energy space, which in itself is a non-linear phase space, we will content ourselves with (\texttt{i}) placing the initial data in Coulomb gauge, (\texttt{ii}) working in the \textit{DeTurck gauge}, ``revealing the parabolicity'' of the equation for $A$, (\texttt{iii}) linearising the equation \eqref{app-AHG} around a smooth background and (\texttt{iv})  proving well-posedness for equation satisfied by the perturbation in $H^1 (\R^2 \to \R^2 \times \C)$. Passing to this problem loses no generality, as, with regards to (\texttt{i}), we can convert any initial data $(A^{\mathrm{in}}, \phi^{\mathrm{in}})$ to Coulomb gauge via 
	\[
		(A^{\mathrm{in}}, \phi^{\mathrm{in}}) 
			\mapsto (A^{\mathrm{in}} + d \chi, \phi^{\mathrm{in}} e^{i \chi}), \qquad \text{where } \chi := - \Delta^{-1} \partial^\ell A_\ell^{\mathrm{in}}, 
	\]
and, with regards to (\texttt{ii}), any configuration $(A, \phi)$ to temporal gauge via 
	\[
		(A, \phi) \mapsto (A + d \chi, \phi e^{i \chi}), \qquad \text{where }\chi(t)
			:= - \int_0^t A_t (t') \, dt'.
	\]
The third point (\texttt{iii}) is more technical, and requires a bit of functional analysis for the energy space, which is contained in Section \ref{sec:energy}. This will also lay the groundwork for the energy estimates for (\texttt{iv}) in Section \ref{sec:lwp}, which will allow us to conclude global well-posedness. 

\subsection{Energy space}\label{sec:energy}

Define the \textit{space of smooth configurations} by
	\[
		\frE^\infty 
			:= \bigl\{ (A, \phi) \in \frE : \| \bfD^{(n)} \phi \|_{L^2} + \| \nabla^{(n - 1)} F_{12} \|_{L^2} < \infty \text{ for all $n \in \N$}  \bigr\}.
	\]
We want to decompose the energy space into $\frE = \frE^\infty + H^1 (\R^2)$, and show that the energy functional is continuous with respect to this decomposition. 

\begin{lemma}[Regularity of smooth configurations]\label{lem:Linfty-config}
	Let $(A, \phi) \in \frE^\infty$ be a smooth configuration in Coulomb gauge. Then $(A, \phi) \in L^\infty (\R^2)$ and $A \in L^4 (\R^2)$. 
\end{lemma}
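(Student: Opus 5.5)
The plan is to treat the scalar field and the connection separately. The scalar field is handled by gauge-invariant Sobolev arguments via the diamagnetic inequality \eqref{diamagnetic}. The connection is handled through the Biot--Savart representation, which the Coulomb gauge makes available. Throughout we use that $(A,\phi)\in\frE^\infty$ gives $\bfD^{(n)}\phi\in L^2$ and $\nabla^{(n-1)}F_{12}\in L^2$ for all $n\geq 1$, together with $1-|\phi|^2\in L^2$ and $A\in L^p$ for some $2<p<\infty$.

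\emph{Bound for $\phi$.} Apply \eqref{diamagnetic} to $\Phi=\bfD_j\phi$. This gives $|\nabla|\bfD\phi||\leq|\bfD^{(2)}\phi|\in L^2$, so $|\bfD\phi|\in H^1\subset L^q$ for every $2\le q<\infty$. The same argument applied to $\bfD^{(2)}\phi$ gives $|\bfD^{(2)}\phi|\in H^1\subset L^q$. Hence $|\nabla|\bfD\phi||\in L^q$ for some $q>2$, and $|\bfD\phi|\in W^{1,q}\subset L^\infty$ by Morrey.

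Next, the pointwise bound $\bigl||\phi|-1\bigr|\leq|1-|\phi|^2|$ shows $|\phi|-1\in L^2$, and \eqref{diamagnetic} gives $|\nabla|\phi||\leq|\bfD\phi|\in L^2\cap L^\infty$. Thus $|\phi|-1\in H^1$, hence it lies in $L^q$ for all finite $q$, and so $|\phi|-1\in W^{1,q}$ with $q>2$. This yields $\phi\in L^\infty$ by Morrey. Note that \eqref{inefficientLinfty} cannot be quoted directly, since it assumes $\Phi\in L^\infty$ qualitatively.

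\emph{Bound for $A$.} Since $\partial^\ell A_\ell=0$ and $\epsilon_{jk}\partial^jA^k=F_{12}$, each component $A_j$ solves $\Delta A_j=\pm\partial_kF_{12}$ with $k\ne j$. The difference between $A$ and the Biot--Savart field $\nabla^\perp\Delta^{-1}F_{12}$ is therefore a harmonic vector field. It lies in $L^p+\dot W^{1,2}$, and Liouville forces it to be zero; this is precisely where admissibility $A\in L^p$ is used.

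Consequently $\nabla A$ is a Riesz transform of $F_{12}$. Now $F_{12}\in H^k$ for every $k$, so $F_{12}\in L^2\cap L^\infty$, and Calder\'on--Zygmund gives $\nabla A\in L^q$ for all $2\le q<\infty$. Combining $A\in L^p$ with $\nabla A\in L^q$ for some $q>2$, a local Morrey estimate on unit balls, with the average controlled by $\|A\|_{L^p}$, gives $A\in L^\infty$. If $p\leq 4$, then $A\in L^4$ follows at once by interpolating between $L^p$ and $L^\infty$.

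\emph{Main obstacle: $A\in L^4$ when $p>4$.} Here $\dot H^1\cap L^\infty\cap L^p$ does not embed into $L^4$ in two dimensions, so some integrability of $F_{12}$ below $L^2$ is needed. Hardy--Littlewood--Sobolev would give $A\in L^4$ if $F_{12}\in L^{4/3}$. My first attempt is to extract this from the self-dual structure, in particular the vorticity identity \eqref{vorticity-identity} together with $\omega\in L^1$ from \eqref{vorticity-energy}. Concretely:
\begin{itemize}
\item on the set where $|\phi|$ is close to $1$, use $|\phi|^2A=\Im(\overline\phi\nabla\phi)-\Im(\overline\phi\bfD\phi)$ to bound $A$ by the current;
\item on the complementary set, use that it has finite measure because $1-|\phi|^2\in L^2$, so the $L^\infty$ bound on $A$ suffices there.
\end{itemize}
If this fails, the fallback is to show the $L^4$ bound only for data whose Coulomb representative has $p\leq4$, and to reduce the general case to that one by a density argument.
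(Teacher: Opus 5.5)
Your arguments for $\phi\in L^\infty$, for $A\in L^\infty$, and for $A\in L^4$ when $p\le 4$ are fine. For $\phi$, the diamagnetic inequality plus Morrey correctly avoids the qualitative $L^\infty$ hypothesis built into \eqref{inefficientLinfty}. The gap is the case $p>4$, which is the real content of the lemma, and neither of your strategies closes it.

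The first bullet is circular. The identity $|\phi|^2A=\Im(\overline\phi\nabla\phi)-\Im(\overline\phi\bfD\phi)$ is just $\nabla\phi=\bfD\phi+iA\phi$ rearranged. The quantity $\Im(\overline\phi\nabla\phi)$ is gauge-dependent, and you know nothing about its integrability except through $A$ itself, so restricting to $\{|\phi|\approx 1\}$ gains nothing.

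The density fallback would need an $L^4$ bound that is uniform along the approximating sequence. Your $p\le 4$ bound runs through $\|A\|_{L^p}$, so it presupposes the very estimate that is missing.

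Moreover, the intermediate target $F_{12}\in L^{4/3}$ is false in general on $\frE^\infty$. Take $\phi\equiv 1$ and $A=\nabla^\perp(\langle x\rangle^{-\delta}\sin x_1)$ with $\nabla^\perp=(-\partial_2,\partial_1)$ and $1<\delta\le\frac32$. This configuration lies in $\frE^\infty$ in Coulomb gauge and $A\in L^2\cap L^\infty\subset L^4$. Yet $F_{12}=\Delta(\langle x\rangle^{-\delta}\sin x_1)\notin L^{4/3}$. So no argument routed through the integrability of $F_{12}$ alone can work.

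The missing idea, which is the paper's, is to apply Biot--Savart not to $A$ but to the gauge-invariant supercurrent $A+\Im(\overline\phi\bfD\phi)$, whose curl is the vorticity $\omega(A,\phi)$. Consider the field
\[
	\nabla^\perp\Delta^{-1}\omega(A,\phi)-\mathbb P^{\mathrm{df}}\Im(\overline\phi\bfD\phi).
\]
It is divergence-free with curl $\omega-\epsilon_{jk}\partial^j\Im(\overline\phi\bfD^k\phi)=F_{12}$. It and $A$ both lie in $L^p+L^2+L^4$, so the Liouville argument you already use shows it equals $A$.

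By \eqref{vorticity-identity}, $\omega$ is a sum of products of two functions in $L^2\cap L^\infty$, using the bounds on $\phi$, $|\bfD\phi|$ and $F_{12}$ that you already proved. Hence $\omega\in L^1\cap L^\infty\subset L^{4/3}$, and Hardy--Littlewood--Sobolev puts $\nabla^\perp\Delta^{-1}\omega$ in $L^4$. The second term is in $L^4$ because $\mathbb P^{\mathrm{df}}$ is bounded on $L^4$ and $\|\Im(\overline\phi\bfD\phi)\|_{L^4}\le\|\phi\|_{L^\infty}\|\bfD\phi\|_{L^4}$.

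This is where the better localisation of $\omega$ ($L^1$ rather than $L^2$) is actually used; you mention $\omega\in L^1$ but never convert it into a bound on $A$. The argument gives $A\in L^r$ for all $2<r<\infty$ independently of the admissibility exponent, so no case split in $p$ is needed.
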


\begin{proof}
	The bound for the scalar field follows immediately from \eqref{inefficientLinfty}. For the magnetic potential, we write
		\begin{align*}
            A 
                &= - \epsilon_{jk} \partial^k \Delta^{-1} \omega(A, \phi) - \mathbb P^{\mathrm{df}} (\overline \phi \bfD \phi), \\
            A 
                &= - \epsilon_{jk} \partial^k \Delta^{-1} F_{12} ,
        \end{align*}
	where $\mathbb P^{\mathrm{df}}$ is the projection to divergence-free vector fields. The latter is the standard Biot-Savart law, and is favourable at top-order. The former is a modified Biot-Savart law arising from the expression
		 \[
            A  
                = \mathbb P^{\mathrm{df}} (A + \Im(\overline \phi \bfD \phi)) - \mathbb P^{\mathrm{df}} (\overline \phi \bfD \phi),
        \]
	and is favourable at bottom-order due to the better spatial localisation of the vorticity $\omega(A, \phi) \in L^1 (\R^2)$, recalling the discussion from Lemma \ref{lem:vorticity-I}, compared to the magnetic field $F_{12} \in L^2 (\R^2)$. With these two expressions at our disposal, we can conclude
		\begin{align*}
			\|  A \|_{L^\infty_x} 
				&\lesssim \| A \|_{L^4_x} + \| \nabla A \|_{L^4_x} \\
				&\lesssim \|  \omega (A, \phi)\|_{L^{4/3}_x} + \| \phi \|_{L^\infty_x} \| \underline \bfD \underline \phi \|_{L^4_x} + \| \underline F_{12} \|_{L^4_x}  \lesssim_\cE 1,
		\end{align*}
	using \eqref{inefficientLinfty} and Gagliardo-Nirenberg appropriately. 
\end{proof}

\begin{remark}
	One can carry out the same argument for higher-order derivatives to show that smooth configurations satisfying the Coulomb gauge are in fact smooth in the usual sense, that is, $\frE^\infty_{\mathrm{df}} \subseteq C^\infty (\R^2)$. 
\end{remark}

\begin{proposition}[Decomposition of the energy space]\label{prop:decomp}
	Let $(A^{\mathrm{in}}, \phi^{\mathrm{in}}) \in \mathfrak E$ be a finite-energy configuration in Coulomb gauge. Then 
		\begin{equation}
			(A^{\mathrm{in}}, \phi^{\mathrm{in}})
				= (\underline A, \underline \phi) + (a, u),
		\end{equation}
	where $(\underline A, \underline \phi) \in \mathfrak E^\infty$ is a smooth configuration in Coulomb gauge and $(a, u) \in H^1 (\R^2 \to \R^2 \times \C)$. 
\end{proposition}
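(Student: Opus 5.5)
The natural construction is mollification. Fix a mollifier $\rho \in C^\infty_c(\R^2)$ with $\int \rho = 1$, and set $\rho_\delta(x) := \delta^{-2} \rho(x/\delta)$. Then define
\[
	\underline A := \rho_\delta * A^{\mathrm{in}}, \qquad \underline \phi := \rho_\delta * \phi^{\mathrm{in}},
\]
with $\delta \ll 1$ to be fixed at the end. Convolution commutes with divergence, so $\underline A$ inherits the Coulomb gauge. It then remains to show two things: that $(a, u) := (A^{\mathrm{in}} - \underline A, \phi^{\mathrm{in}} - \underline \phi)$ lies in $H^1$, and that $(\underline A, \underline \phi) \in \frE^\infty$.

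\textbf{The remainder $(a,u)$.} In Coulomb gauge the Helmholtz decomposition gives $\| \nabla A^{\mathrm{in}} \|_{L^2} \lesssim \| F_{12}^{\mathrm{in}} \|_{L^2}$, which is finite. For the scalar field we have $\nabla \phi^{\mathrm{in}} = \bfD \phi^{\mathrm{in}} + i A^{\mathrm{in}} \phi^{\mathrm{in}}$. Here $\bfD\phi^{\mathrm{in}} \in L^2$, and the product $A^{\mathrm{in}} \phi^{\mathrm{in}}$ is locally in $L^2$ because $A^{\mathrm{in}} \in L^p$ and $\phi^{\mathrm{in}} \in H^1_\loc \subset L^q_\loc$. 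Globally one should split $\phi^{\mathrm{in}}$ using $|\phi^{\mathrm{in}}| \le 1 + |1-|\phi^{\mathrm{in}}|^2|$. The bounded part contributes $A^{\mathrm{in}}$, which lies in $L^p \cap \dot H^1 \subset L^2_\loc$ with decay. The unbounded part is controlled by Hölder, $L^p \cdot L^{r}$.

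Given $\nabla f \in L^2$, the standard estimate $\| f - \rho_\delta * f \|_{L^2} \lesssim \delta \| \nabla f \|_{L^2}$ gives $a, u \in L^2$. Moreover $\nabla a = \nabla A^{\mathrm{in}} - \rho_\delta * \nabla A^{\mathrm{in}} \in L^2$, and the same argument applies to $u$ once $\nabla \phi^{\mathrm{in}}$ is shown to be in $L^2$.

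\emph{Main obstacle.} Establishing $\nabla \phi^{\mathrm{in}} \in L^2$ is the delicate step, since $A^{\mathrm{in}} \phi^{\mathrm{in}} \in L^2$ is not automatic. The first thing to try is the Biot--Savart bound used in Lemma \ref{lem:Linfty-config}, namely $\|A^{\mathrm{in}}\|_{L^4} \lesssim \|\omega\|_{L^{4/3}} + \dots$. If that is insufficient, I would replace plain mollification by a covariant one. The idea is to mollify locally in a gauge where $A$ is small on balls of radius $\delta$ (for instance, using a partition of unity and local Coulomb/exponential gauges), so that $u$ is controlled by $\bfD\phi^{\mathrm{in}}$ instead of $\nabla\phi^{\mathrm{in}}$.

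\textbf{Membership of $(\underline A, \underline \phi)$ in $\frE^\infty$.} All derivatives of $\underline A$ and $\underline \phi$ of order at least one are bounded in $L^2$ by $\delta^{1-n}$ times the energy-level quantities, because derivatives fall on $\rho_\delta$. Expanding $\bfD^{(n)}\underline\phi$ by the Leibniz rule produces products of $\nabla^{(k)}\underline A$, $\underline A$ and $\nabla^{(m)}\underline\phi$. These are controlled using $\underline A \in L^\infty$ (by Young's inequality, since $A^{\mathrm{in}} \in L^p$ and $\rho_\delta \in L^{p'}$) and $\underline \phi \in L^\infty$ (from $\phi^{\mathrm{in}} \in L^2_\loc$ uniformly plus the potential bound). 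Finiteness of the energy of $(\underline A,\underline\phi)$ comes from three facts: $\underline A \in L^p$, $\underline F_{12} = \rho_\delta * F^{\mathrm{in}}_{12} \in L^2$, and $1 - |\underline\phi|^2 \in L^2$. For the last, write $1-|\underline\phi|^2 - \rho_\delta*(1-|\phi^{\mathrm{in}}|^2)$ as a commutator and bound it by $\delta\|\nabla\phi^{\mathrm{in}}\|_{L^2}(1+\|\phi^{\mathrm{in}}\|)$ via the same estimates as in the previous step.
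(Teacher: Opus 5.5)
\textbf{There is a genuine gap.} The step you call the main obstacle, $\nabla\phi^{\mathrm{in}}\in L^2$, is not delicate. It is false whenever $N\neq 0$, which is the case that matters.

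The equivariant vortex $\phi=f(r)e^{iN\theta}$, $A=a(r)\,d\theta$ already shows this. It is in Coulomb gauge, it has $A\in L^p$ for every $p>2$, and $\bfD\phi$ decays exponentially. Yet $|\nabla\phi|\approx|A\phi|\approx|N|/r$ at infinity, which is not square-integrable. In general, suppose $\nabla\phi\in L^2$. Write $A+\Im(\overline\phi\bfD\phi)=(1-|\phi|^2)A+\Im(\overline\phi\nabla\phi)$ and repeat the cutoff argument of Proposition~\ref{lem:vorticity-II}; this forces $\deg[A,\phi]=0$. So only the covariant combination $\bfD\phi^{\mathrm{in}}=\nabla\phi^{\mathrm{in}}-iA^{\mathrm{in}}\phi^{\mathrm{in}}$ lies in $L^2$. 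No Biot--Savart or H\"older bound can put $A^{\mathrm{in}}\phi^{\mathrm{in}}$ into $L^2$ by itself.

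Everything built on $\|\nabla\phi^{\mathrm{in}}\|_{L^2}$ therefore fails:
\begin{enumerate}
\item the bound $\|u\|_{L^2}\lesssim\delta\|\nabla\phi^{\mathrm{in}}\|_{L^2}$;
\item the claim $\nabla u=\nabla\phi^{\mathrm{in}}-\rho_\delta*\nabla\phi^{\mathrm{in}}\in L^2$;
\item the commutator bound for $1-|\underline\phi|^2$;
\item the Leibniz-rule argument for $\underline\bfD\,\underline\phi\in L^2$. Term by term, $\nabla\underline\phi$ and $\underline A\,\underline\phi$ are each not in $L^2$; only $\nabla\underline\phi-i\underline A\,\underline\phi$ is.
\end{enumerate}

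This is more than a lost estimate. If $\rho$ has nonzero first moment $m=\int y\rho(y)\,dy$, then for the vortex, near infinity,
\[
\phi^{\mathrm{in}}-\rho_\delta*\phi^{\mathrm{in}}=\delta\,m\cdot\nabla\phi^{\mathrm{in}}+O(\delta^2r^{-2}),
\]
which has size $\delta|N|/r$, so $u\notin L^2$. With an even kernel the first-order term cancels, and plain mollification might be salvageable. That would require second-order and commutator estimates for $\phi^{\mathrm{in}}-\rho_\delta*\phi^{\mathrm{in}}$ and for $\rho_\delta*(A^{\mathrm{in}}\phi^{\mathrm{in}})-\underline A\,\underline\phi$, expressed only through $\bfD\phi^{\mathrm{in}}$, $\nabla A^{\mathrm{in}}$ and the potential. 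None of this is in your proposal. Your local-gauge fallback is too vague to assess. Patching together mollifications done in different local gauges puts the gradients of those gauge changes into $\nabla u$, which is the same kind of term causing the trouble.

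\textbf{How the paper avoids this.} It uses a covariant mollifier: the covariant linear heat flow in caloric gauge $A_s=0$, which propagates the Coulomb condition, run up to time $\underline s\sim(1+\cE)^{-1}$.
\begin{itemize}
\item The $L^2$ bound on $(a,u)$ comes from the fundamental theorem of calculus. Here $\partial_sA_j=\partial^\ell F_{\ell j}$ and $\partial_s\phi=\bfD^\ell\bfD_\ell\phi$ are bounded in $L^2$ by $s^{-1/2}$ times energy quantities, which is integrable.
\item $\nabla a$ is controlled by the curl, using the Coulomb gauge.
\item For the scalar field, the paper writes
\[
\nabla\underline\phi-\nabla\phi^{\mathrm{in}}=\bigl(\underline\bfD\,\underline\phi-\bfD\phi^{\mathrm{in}}\bigr)+i\bigl(\underline A\,\underline\phi-A^{\mathrm{in}}\phi^{\mathrm{in}}\bigr).
\]
It estimates only the difference of the products, via $\int_0^{\underline s}\|\phi\|_{L^\infty}\|\partial_sA\|_{L^2}+\|A\|_{L^\infty}\|\partial_s\phi\|_{L^2}\,ds$. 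The $L^\infty$ bounds used there grow only like $\log(\underline s/s)$.
\item Membership $(\underline A,\underline\phi)\in\frE^\infty$ comes from covariant smoothing, which bounds $\bfD^{(n)}\underline\phi$ directly rather than through a Leibniz expansion.
\end{itemize}
At no point does the paper need $\nabla\phi^{\mathrm{in}}$ or $A^{\mathrm{in}}\phi^{\mathrm{in}}$ to lie in $L^2$. That is the idea your argument is missing.
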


\begin{proof}
	We introduce the \textit{covariant linear heat equation},
		\begin{equation}\label{eq:linear}
		\begin{split}
			(\bfD_s - \bfD^\ell \bfD_\ell) \phi 
				&= 0, \\
			F_{s j}
				&= \partial^\ell F_{\ell j},
		\end{split}
		\end{equation}
	where $(A, \phi)$ is a configuration on $\R^2 \times [0, \infty)$. Imposing the \textit{caloric gauge} $A_s = 0$ and taking the divergence of the second equation, we obtain $\partial_s \partial^\ell A_\ell = 0$. Thus, the Coulomb gauge \eqref{coulomb} is propagated along the flow of \eqref{linear} in caloric gauge. Imposing initial data 
		\[
			(A, \phi)_{|s = 0} 
				:= (A^{\mathrm{in}}, \phi^{\mathrm{in}})
		\]
	for \eqref{linear} under caloric gauge, the DeTurck gauge $A_s = \partial^\ell A_\ell$ is trivially satisfied, so the second equation reduces to the linear heat equation for $A$, and the Cauchy problem is well-posed. One can show covariant parabolic smoothing estimates similar to those in Section \ref{sec:smooth} up to the time-scale $s \ll (1 + \cE)^{-1}$; we leave the details to the reader. Set then $\underline s:= c(1 + \cE)^{-1}$ for some small $c \ll 1$, and define the regularised configuration by 
		\[
			 (\underline A, \underline \phi)
				:= (A, \phi)_{|s = \underline s}.
		\]
	An immediate consequence of the covariant smoothing estimates is that $(\underline A, \underline \phi) \in \frE^\infty$. It remains to show that the difference satisfies $(\underline A, \underline \phi) - (A^{\mathrm{in}}, \phi^{\mathrm{in}}) \in H^1 (\R^2)$. 

	At bottom-order, we use the fundamental theorem of calculus, the equation \eqref{linear}, and the covariant smoothing estimates to bound 
		\begin{align*}
			\bigl\|(\underline A, \underline \phi) - (A^{\mathrm{in}}, \phi^{\mathrm{in}}) \bigr\|_{L^2_x} 
				&\lesssim \int_0^{\underline s} \| (\partial_s A , \partial_s \phi) \|_{L^2_x} \, ds\\
				&\lesssim \Bigl(\int_0^{\underline s} s^{-\frac12} \, ds\Bigr) \Bigl( \bigl\| F_{12}^{\mathrm{in}} \bigr\|_{L^2_x} + \| \bfD \phi^{\mathrm{in}} \|_{L^2_x} \Bigr)\, ds  \lesssim 1.
		\end{align*}

	The magnetic potentials are divergence-free, so their derivatives may be estimated by the curls, which has monotonically decreasing $L^2$-norm under the linear heat flow. Thus, 
		\begin{align*}
			\bigl\| \nabla \underline A - \nabla A^{\mathrm{in}} \bigr\|_{L^2_x}
				&\lesssim \bigl\| F_{12}^{\mathrm{in}} \bigr\|_{L^2_x} \lesssim_\cE 1.
		\end{align*}

	For the derivatives of the scalar-fields, it is convenient to prove an $L^\infty$-bound for $(A, \phi)$ along the flow. By Lemma \ref{lem:Linfty-config}, this immediately holds for the regularised configuration, so, integrating from $s = \underline s$ using the fundamental theorem of calculus,
		\begin{align*}
			\| (A,\phi) (s) \|_{L^\infty_x}
				&\lesssim \bigl\| (\underline A, \underline \phi) \bigr\|_{L^\infty_x} + \int_s^{\underline s} \|(\partial_s A, \partial_s \phi) \|_{L^\infty_x} \, ds' \\
				&\lesssim_\cE 1 + \Bigl( \int_s^{\underline s} |s'|^{-1} \, ds'\Bigr)\\
				&\lesssim 1 + \log(\underline s/s),
		\end{align*}
	controlling the high-frequency contributions using the equation \eqref{linear} and the covariant parabolic smoothing. we write the difference of the derivatives in terms of that of their covariant counterparts, and rewrite the lower-order terms via the fundamental theorem of calculus, 
		\begin{align*}
			\bigl\| \nabla \underline \phi - \nabla \phi^{\mathrm{in}} \bigr\|_{L^2_x} 
				&\lesssim \bigl\| \underline \bfD \underline \phi \bigr\|_{L^2_x} + \bigl\| \bfD \phi^{\mathrm{in}} \bigr\|_{L^2_x} + \int_0^{\underline s} \| \phi \|_{L^\infty_x} \| \partial_s A \|_{L^2_x} + \| A \|_{L^\infty_x} \| \partial_s \phi \|_{L^2_x} \, ds \\
				&\lesssim_\cE 1 + \int_0^{\underline s} (1 + \log(\underline s/s)) s^{-\frac12} \, ds \\
				&\lesssim_\cE 1. 
		\end{align*}
	This completes the proof. 
\end{proof}

\begin{proposition}[Continuity of the energy functional]\label{prop:lipschitz}
	Let $(\underline A, \underline \phi) \in \frE^\infty$ be a smooth configuration in Coulomb gauge. Then the map 
		\begin{align*}
			H^1 (\R^2) 
				&\longrightarrow L^2 (\R^2) \\
			(a, u) 
				&\longrightarrow \bigl( \tfrac12(1 - |\underline \phi + a|^2), \epsilon_{jk} \partial^j (\underline A_k + a_k), (\underline \bfD - i a)(\underline \phi + u) \bigr)
		\end{align*}
	is well-defined and locally Lipschitz continuous. Furthermore, the energy 
		\begin{align*}
			H^1 (\R^2 \to \R^2 \times \C) 
				&\longrightarrow [0, \infty), \\
			(a, u)
				&\longmapsto \cE[\underline A + a, \underline \phi + u]
		\end{align*}
	is well-defined and locally Lipschitz continuous. 
\end{proposition}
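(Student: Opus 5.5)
The plan is to expand each component of the map around the smooth background $(\underline A, \underline \phi)$. Each piece will then be a constant term in $L^2$, plus a term linear in $(a,u)$, plus a term quadratic in $(a,u)$. Well-definedness and local Lipschitz continuity follow from Hölder's inequality and the Sobolev embedding $H^1(\R^2) \hookrightarrow L^4(\R^2)$. The only input from the background is Lemma \ref{lem:Linfty-config}: since $(\underline A, \underline \phi) \in \frE^\infty$ is in Coulomb gauge, we have $\underline \phi, \underline A \in L^\infty(\R^2)$ and $\underline A \in L^4(\R^2)$. Finite energy of the background also gives $1 - |\underline \phi|^2,\ \underline F_{12},\ \underline \bfD \underline \phi \in L^2$. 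I read the first component as $\tfrac12(1 - |\underline\phi + u|^2)$; the $a$ appearing there in the statement is presumably a typo for $u$.

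For the charge density, write
\[
1 - |\underline \phi + u|^2 = (1 - |\underline\phi|^2) - 2\Re(\overline{\underline\phi} u) - |u|^2 .
\]
The middle term is bounded in $L^2$ by $\|\underline\phi\|_{L^\infty}\|u\|_{L^2}$. The last term satisfies $\||u|^2\|_{L^2} = \|u\|_{L^4}^2 \lesssim \|u\|_{H^1}^2$. For the difference of two inputs $u, v$, the term $|u|^2 - |v|^2 = \Re\bigl((\overline u + \overline v)(u - v)\bigr)$ is bounded by $(\|u\|_{H^1} + \|v\|_{H^1})\|u - v\|_{H^1}$. This gives local Lipschitz continuity.

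The curvature component $\underline F_{12} + \epsilon_{jk}\partial^j a_k$ is affine in $a$, with $\|\nabla a\|_{L^2} \le \|a\|_{H^1}$, so it is globally Lipschitz.

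For the covariant derivative, expand
\[
(\underline \bfD - i a)(\underline\phi + u) = \underline\bfD\underline\phi + \nabla u - i\underline A u - i a\underline\phi - i a u .
\]
The linear terms are controlled using $\underline A, \underline\phi \in L^\infty$. The bilinear term satisfies $\|au\|_{L^2} \le \|a\|_{L^4}\|u\|_{L^4}$, and its differences are handled by the same polarisation as for $|u|^2$.

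For the energy, observe that $\cE[\underline A + a, \underline \phi + u]$ is $\tfrac12$ times the sum of the squared $L^2$-norms of the three components just treated. Squaring the $L^2$-norm is locally Lipschitz, since $\bigl|\|f\|^2 - \|g\|^2\bigr| \le (\|f\| + \|g\|)\|f - g\|$. Composing with the first part therefore gives local Lipschitz continuity of the energy.

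To land in the space where $\cE$ is defined, I also check admissibility of $\underline A + a$. This holds because $\underline A \in L^4$ and $a \in H^1 \subset L^4$, so $\underline A + a \in L^4$ with $p = 4 \in (2,\infty)$.

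The main obstacle is the $L^\infty$ control of the background, in particular $\underline A \in L^\infty$, which is needed for the term $\underline A u$. This is exactly why the statement requires the Coulomb gauge and smoothness, and I take it directly from Lemma \ref{lem:Linfty-config}. Everything else is routine.
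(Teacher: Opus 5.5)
Your proof is correct and follows the paper's argument: you expand each component about the background, control the linear terms with the bounds from Lemma \ref{lem:Linfty-config}, control the quadratic terms with H\"older and $H^1 \hookrightarrow L^4$, and pass to the energy via $\bigl|\|f\|^2 - \|g\|^2\bigr| \le (\|f\| + \|g\|)\|f - g\|$. You are also right that the first component should read $\underline\phi + u$, and your admissibility check makes explicit a point the paper leaves implicit. One small correction to your closing remark: $\underline A \in L^\infty$ is not actually needed for the term $\underline A u$, since $\|\underline A u\|_{L^2} \le \|\underline A\|_{L^4}\|u\|_{L^4}$, so that step is not really where the Coulomb gauge and smoothness hypotheses are doing the work.
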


\begin{proof}
	Let $(a, u), (b, v) \in H^1(\R^2 \to \R^2 \times \C)$, then by the triangle inequality, Sobolev embedding, and the $L^\infty$-bounds from Lemma \ref{lem:Linfty-config}, we can write 
		\begin{align*}
			 \bigl\||\underline \phi + u|^2 - |\underline \phi - v|^2 \bigr\|_{L^2} 
				&\lesssim \| \underline \phi \|_{L^\infty} \| u - v \|_{L^2_x} + \| |u|^2 - |v|^2 \|_{L^2}\\
				&\lesssim \| \underline \phi \|_{L^\infty} \Bigl( \|u\|_{H^1} + \|v \|_{H^1} \Bigr) \| u - v\|_{H^1}
		\end{align*}
	and 
		\begin{align*}
			\bigl\| (\underline \bfD - i a) (\underline\phi + u) - (\underline \bfD - i b) (\underline\phi + v)  \bigr\|_{L^2}
				&\lesssim \| \underline \bfD u - \underline \bfD v \|_{L^2} + \| a \underline \phi - b \underline \phi \|_{L^2} + \| au - bv \|_{L^2}\\
				&\lesssim \Bigl( 1 + \| (\underline A, \underline \phi)\|_{L^\infty} + \| (a, u) \|_{H^1} + \| (b, v)\|_{H^1} \Bigr) \\
					&\qquad \times\| (a, u) - (b, v) \|_{H^1},
		\end{align*}
	and 
		\[
			\bigl\| \epsilon_{jk} \partial^j (\underline A_k + a_k) - \epsilon^{jk} \partial^j (\underline A_k + b_k) \bigr\|_{L^2}
				\leq \|\nabla a - \nabla b \|_{L^2}.
		\]
	The continuity of the energy functional follows from the triangle inequality. 
\end{proof}

\subsection{Proof of global well-posedness}
\label{sec:lwp}
As done in Demoulini-Stuart \cite[Section 4]{DemouliniStuart1997}, for the initial data problem we find it convenient to pass to the \textit{DeTurck gauge}, 
	\begin{equation}\label{eq:DT}\tag{DT}
		A_t 
			= \partial^\ell A_\ell. 
	\end{equation}
This renders the equation \eqref{app-AHG} a non-linear heat equation for $(A, \phi)$,

\begin{lemma}[Abelian Higgs gradient flow in DeTurck gauge]
	Let $(A, \phi)$ be a configuration on $[0, T] \times \R^2$ solving the abelian Higgs gradient flow \eqref{app-AHG} in DeTurck gauge \eqref{DT}. Then the equation reduces to 
		\begin{equation}\label{eq:lambda-AHG}
		\begin{split}
			(\partial_t - \Delta) \phi
				&= - 2i A^\ell \bfD_\ell \phi + A^\ell A_\ell \phi + \tfrac\lambda2(1 - |\phi|^2) \phi, \\
			(\partial_t - \Delta) A_j
				&= \Im(\overline \phi \bfD_j \phi) .
		\end{split}
		\end{equation}
	\end{lemma}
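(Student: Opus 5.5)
The plan is a direct computation: substitute the DeTurck gauge condition $A_t = \partial^\ell A_\ell$ into each equation of \eqref{app-AHG} and expand the covariant derivatives.

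\textbf{Scalar field equation.} Write $\bfD_t \phi = \partial_t \phi - i A_t \phi = \partial_t \phi - i (\partial^\ell A_\ell) \phi$. Expanding the covariant Laplacian gives
\[
\bfD^j \bfD_j \phi = \Delta \phi - i (\partial^j A_j) \phi - 2 i A^j \partial_j \phi - A^j A_j \phi .
\]
In the difference $\bfD_t \phi - \bfD^j\bfD_j \phi$ the two divergence terms $\mp i(\partial^\ell A_\ell)\phi$ cancel; this cancellation is exactly why the DeTurck gauge is chosen. What remains is
\[
(\partial_t - \Delta)\phi + 2 i A^j \partial_j \phi + A^j A_j \phi .
\]
Rewriting $\partial_j \phi = \bfD_j \phi + i A_j \phi$ gives $2iA^j\partial_j\phi = 2iA^j\bfD_j\phi - 2A^jA_j\phi$. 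Hence the left side equals $(\partial_t - \Delta)\phi + 2iA^\ell \bfD_\ell \phi - A^\ell A_\ell \phi$. Moving these terms to the right-hand side, together with the potential term $\tfrac\lambda2(1-|\phi|^2)\phi$, yields the first line of \eqref{lambda-AHG}.

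\textbf{Ampère's law.} We have $F_{tj} = \partial_t A_j - \partial_j A_t = \partial_t A_j - \partial_j \partial^\ell A_\ell$. On the other side, $\partial^\ell F_{\ell j} = \partial^\ell \partial_\ell A_j - \partial_j \partial^\ell A_\ell = \Delta A_j - \partial_j \partial^\ell A_\ell$. Substituting into $F_{tj} = \Im(\overline\phi \bfD_j\phi) + \partial^\ell F_{\ell j}$, the terms $-\partial_j\partial^\ell A_\ell$ appear on both sides and cancel. This leaves $(\partial_t - \Delta) A_j = \Im(\overline \phi \bfD_j \phi)$, the second line of \eqref{lambda-AHG}.

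There is no real obstacle here; the only care needed is tracking signs and factors of $i$ in the expansion of $\bfD^j\bfD_j$. For rigor, one should also note that the computation is valid for sufficiently regular configurations, or in the sense of distributions for $(A,\phi) \in H^1_\loc$, since all manipulations are pointwise algebraic identities involving at most second derivatives.
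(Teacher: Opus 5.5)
Your computation is correct: the $\mp i(\partial^\ell A_\ell)\phi$ terms cancel in the scalar equation and the $-\partial_j\partial^\ell A_\ell$ terms cancel in Amp\`ere's law, and all signs and factors of $i$ check out. The paper states this lemma without proof and leaves exactly this direct substitution of $A_t = \partial^\ell A_\ell$ to the reader, so your argument is the intended one.
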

We would like to regard this as a perturbation of the linear heat equation. With this viewpoint in mind, recall the standard linear parabolic estimate, 

\begin{lemma}[Linear energy estimate]
	Let $u: [0, T] \times \R^2 \to \C$ be a sufficiently regular solution to the linear heat equation 
		\begin{equation}
			(\partial_t - \Delta) u 
				= \cF + \nabla \cG. 
		\end{equation}
	Then, for each non-negative integer $k \in \N_0$, we have 
		\begin{equation}\label{eq:energy-estimate}
			\| u \|_{L^\infty_t H^k_x} + \| \nabla u \|_{L^2_t H^k_x} 
				\lesssim \| u^{\mathrm{in}} \|_{L^2_x} + \| \cF \|_{L^1_t H^k_x} + \| \cG \|_{L^2_t H^k_x}. 
		\end{equation}
\end{lemma}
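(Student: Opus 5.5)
We will prove \eqref{energy-estimate} by the standard energy method, commuting derivatives through the equation. Since the heat operator $\partial_t - \Delta$ has constant coefficients, for each multi-index $\alpha$ with $|\alpha| \leq k$ the function $\nabla^\alpha u$ solves
	\[
		(\partial_t - \Delta) \nabla^\alpha u = \nabla^\alpha \cF + \nabla \nabla^\alpha \cG.
	\]
It therefore suffices to treat the case $k = 0$. We then apply that case to each $\nabla^\alpha u$ and sum over $|\alpha| \leq k$. The data term should be read as $\| u^{\mathrm{in}} \|_{H^k_x}$ when $k \geq 1$; this is the only sensible reading, since the $k = 0$ estimate applied to $\nabla^\alpha u$ produces $\| \nabla^\alpha u^{\mathrm{in}} \|_{L^2_x}$.

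For $k = 0$, we multiply the equation by $\overline u$, take real parts and integrate over $\R^2$. Integrating by parts in $x$ in the Laplacian term and in the divergence term gives the energy identity
	\[
		\tfrac12 \tfrac{d}{dt} \| u \|_{L^2_x}^2 + \| \nabla u \|_{L^2_x}^2
			= \Re \int_{\R^2} \cF \, \overline u \, dx - \Re \int_{\R^2} \cG \cdot \nabla \overline u \, dx.
	\]
Boundary terms vanish by the assumed regularity and decay. We then integrate this identity over $[0, t]$ and take the supremum over $t \in [0, T]$. The two forcing terms are estimated separately.
\begin{enumerate}
	\item For the $\cF$ term, H\"older's inequality gives $\| \cF \|_{L^1_t L^2_x} \| u \|_{L^\infty_t L^2_x}$.
	\item For the $\cG$ term, Cauchy--Schwarz gives $\| \cG \|_{L^2_{t,x}} \| \nabla u \|_{L^2_{t,x}}$.
\end{enumerate}
Applying Young's inequality to both products, we absorb $\tfrac14 \| u \|_{L^\infty_t L^2_x}^2$ and $\tfrac12 \| \nabla u \|_{L^2_{t,x}}^2$ into the left-hand side. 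This yields
	\[
		\| u \|_{L^\infty_t L^2_x}^2 + \| \nabla u \|_{L^2_{t,x}}^2 \lesssim \| u^{\mathrm{in}} \|_{L^2_x}^2 + \| \cF \|_{L^1_t L^2_x}^2 + \| \cG \|_{L^2_{t,x}}^2.
	\]
Taking square roots gives \eqref{energy-estimate} for $k = 0$.

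There is no genuine obstacle. The only point needing care is justifying the integration by parts. The phrase ``sufficiently regular'' lets us assume that $u$ lies in $C_t H^{k+1}_x$ with $\partial_t u \in L^2_t H^{k-1}_x$, or alternatively that we first mollify the data and forcing, prove the estimate for the smooth solutions, and pass to the limit by the linearity of the equation and the uniformity of the bound.
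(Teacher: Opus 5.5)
Your proof is correct and is essentially the paper's argument: the paper commutes the multiplier $\langle \nabla \rangle^k$ through the equation and tests against $\langle \nabla \rangle^k u$. That is equivalent to your commuting each $\nabla^\alpha$ with $|\alpha| \leq k$ and summing the $k = 0$ energy identity. Your correction of the data term to $\| u^{\mathrm{in}} \|_{H^k_x}$ is also right: with $\cF = \cG = 0$ the estimate as stated already fails at $t = 0$, and the paper's multiplier argument itself produces $\| \langle \nabla \rangle^k u^{\mathrm{in}} \|_{L^2_x}$.
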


\begin{proof}
	Commute the multiplier $\langle \nabla \rangle^k$ through the equation, and then test the equation against $\langle \nabla \rangle^k u$. Integrating-by-parts appropriately on the left, moving the derivative onto the multiplier on the right, and adequately applying Cauchy-Schwartz gives the result. 
\end{proof}

By Proposition \ref{prop:decomp}, to construct a solution to \eqref{lambda-AHG} with energy-class initial data, it suffices to work with $H^1$-perturbations of a smooth background $(\underline A, \underline \phi) \in \frE^\infty$. This allows us to work with standard linear function spaces. Global well-posedness follows from a standard Picard iteration argument and energy monotonicity, 

\begin{theorem}[Global well-posedness of abelian Higgs gradient flow in DeTurck gauge]
	Let $(\underline A, \underline \phi) \in \frE^\infty$ be a smooth configuration in Coulomb gauge \eqref{coulomb}. Then the abelian Higgs gradient flow in DeTurck gauge \eqref{lambda-AHG} is globally well-posed in $(\underline A, \underline \phi) + H^1 (\R^2 \to \R^2 \times \C)$ and satisfies the energy identity \eqref{energy-identity}.
\end{theorem}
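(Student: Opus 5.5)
The plan is a local Picard iteration in a short-time energy space, a continuation argument driven by energy monotonicity, and then a gauge change back to temporal gauge.

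Write $(A,\phi) = (\underline A, \underline\phi) + (a,u)$, where the background $(\underline A,\underline\phi)\in\frE^\infty$ is time-independent and in Coulomb gauge. Substituting into \eqref{lambda-AHG} gives a semilinear heat system for $(a,u)$:
\[
(\partial_t - \Delta)(a,u) = \mathcal F_{\mathrm{bg}} + \mathcal L(a,u) + \mathcal N(a,u).
\]
The three pieces are as follows.
\begin{itemize}
\item The forcing $\mathcal F_{\mathrm{bg}}$ is the residual of \eqref{lambda-AHG} at the background, namely $\Delta\underline\phi - 2i\underline A^\ell\underline\bfD_\ell\underline\phi + \dots$ and $\Delta\underline A + \Im(\overline{\underline\phi}\,\underline\bfD\underline\phi)$. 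It lies in $L^2_x$ (after peeling off divergence parts as $\nabla\cG$) by Lemma \ref{lem:Linfty-config} and the finiteness of $\|\bfD^{(n)}\underline\phi\|_{L^2}$ and $\|\nabla^{(n-1)}\underline F_{12}\|_{L^2}$. For $\Delta\underline A$ we use the Coulomb condition: $\Delta \underline A_j = \epsilon_{jk}\partial^k\underline F_{12}$.
\item The term $\mathcal L$ is linear in $(a,u)$ with coefficients in $L^\infty$, built from $\underline A$, $\underline\phi$, $\underline\bfD\underline\phi$. It contains a first-order part $\underline A\cdot\nabla u$.
\item The term $\mathcal N$ consists of quadratic and cubic expressions in $(a,u)$. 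The worst are $a\cdot\nabla u$ and $|u|^2 u$.
\end{itemize}

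Using the linear energy estimate \eqref{energy-estimate} with $k=1$, I would work in $X_T := L^\infty_tH^1_x\cap L^2_tH^2_x$ on $[0,T]$. In two dimensions we have $\|a\nabla u\|_{L^1_tH^1}\lesssim T^{1/2}\|a\|_{L^\infty H^1}\|u\|_{L^2H^2}$ up to Gagliardo–Nirenberg factors. The cubic terms are handled by $H^1\hookrightarrow L^p$. The first-order linear term gains $T^{1/2}$ since $\underline A\in L^\infty$ with $\nabla\underline A\in L^4$. Thus the Picard map is a contraction on a ball of $X_T$ for $T$ depending only on $\|(a,u)(0)\|_{H^1}$ and the background, giving local well-posedness and Lipschitz dependence on data. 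Persistence of higher regularity follows from the same estimate with larger $k$.

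For smooth solutions the energy identity \eqref{energy-identity} follows by differentiating $\cE$ in temporal gauge and integrating by parts, with decay at infinity justified by the $H^k$ regularity. The energy is gauge invariant, so the identity holds in any gauge with $F_{t j}$ and $\bfD_t\phi$ interpreted covariantly. For general $H^1$ data, approximate by smooth data and pass to the limit, using continuity of $\cE$ from Proposition \ref{prop:lipschitz} together with continuous dependence.

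Globality is the main obstacle. Energy monotonicity bounds $\|\bfD\phi\|_{L^2}$, $\|F_{12}\|_{L^2}$ and $\|1-|\phi|^2\|_{L^2}$, but not the linear $H^1$ norm of $(a,u)$, which is what controls the local existence time.
\begin{itemize}
\item For $\nabla a$: in DeTurck gauge, $\partial_t\partial^\ell A_\ell = \Delta\partial^\ell A_\ell + \partial^\ell\Im(\overline\phi\bfD_\ell\phi)$, and $\|F_{12}\|_{L^2}$ bounds the curl. So $\|\nabla a\|_{L^2}$ is controlled by the energy plus the growth of the divergence. The divergence grows at most polynomially in $t$ via the heat estimate, using $\|\phi\bfD\phi\|_{L^2}$ bounded by $\|\phi\|_{L^\infty}$, which is handled in $L^p_t$ as in Corollary \ref{cor:Lp-Linfty}.
\item For $\|a\|_{L^2}$ and $\|u\|_{L^2}$: integrate $\partial_t(a,u)$ in time. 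This is bounded by $\int\|\bfD_t\phi\|+\|F_{tj}\|$ plus gauge terms, and these are square-integrable in time by the energy identity.
\item For $\nabla u$: write $\nabla u = \bfD\phi + iA\phi - \nabla\underline\phi$ and bound $A\phi$ using $\|A\|_{L^4}$ and $\phi\in L^4_{\mathrm{loc}}$-type bounds.
\end{itemize}
Together these give an a priori bound on $\|(a,u)(t)\|_{H^1}$ that is finite on every bounded interval. A continuation argument then yields a global solution. Finally, convert to temporal gauge via $\chi(t) = -\int_0^t A_t$, as described before the statement.
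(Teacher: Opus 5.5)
Your local theory (contraction for $(a,u)$ in $L^\infty_tH^1_x\cap L^2_tH^2_x$, with existence time depending only on $\|(a,u)(0)\|_{H^1}$ and the background) matches the paper. So does your energy identity argument (smooth data, then density via Proposition \ref{prop:lipschitz}). The global step takes a genuinely different route.

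The paper reduces continuation to the criterion $\int_0^T\|\phi\|_{L^\infty_x}^2\,dt<\infty$, which it controls by \eqref{inefficientLinfty} and unit-time smoothing. It then runs \eqref{energy-estimate} on both components of \eqref{lambda-AHG}:
\begin{itemize}
\item on $a$, with forcing $\Delta\underline A+\Im(\overline\phi\bfD\phi)$;
\item on $u$, treating $A^\ell\bfD_\ell\phi$, $|A|^2\phi$ and $(1-|\phi|^2)\phi$ as forcing.
\end{itemize}
You exploit more gauge-invariant structure. The energy controls $F_{12}$ (hence the curl of $a$) and $\bfD\phi$ pointwise in time. The dissipation controls $F_{tj}$ and $\bfD_t\phi$ in $L^2_{t,x}$. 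Only the divergence $\partial^\ell A_\ell=A_t$ needs a PDE, namely a scalar heat equation with divergence-form forcing. Your route delivers the pointwise-in-time $H^1$ bound that continuation needs, with lighter PDE input. The paper's route is more mechanical, but it also recovers the $L^2_t\dot H^2_x$ norm of the resolution space. Both rest on the same $L^2_tL^\infty_x$ bound for $\phi$.

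One step fails as written: the $\nabla u$ bound. When the degree $N$ is nonzero, $\nabla\underline\phi\notin L^2$, and hence $A\phi=\nabla\phi-\bfD\phi\notin L^2$ either. For a vortex, $|A|\sim|N|/|x|$ while $|\phi|\to1$, which gives a logarithmic divergence at infinity. No $L^4_{\mathrm{loc}}$ bound on $\phi$ can produce a global $L^2$ norm, which is exactly why the phase space is affine. You must pair terms before estimating:
\[
\nabla u=\bfD\phi-\underline\bfD\underline\phi+i\bigl(a\underline\phi+\underline A\,u+a\,u\bigr).
\]
Then proceed as follows:
\begin{itemize}
\item Handle the linear terms with $\underline A,\underline\phi\in L^\infty$ from Lemma \ref{lem:Linfty-config}.
\item Bound the quadratic term by $\|au\|_{L^2}\lesssim\|a\|_{H^1}\|u\|_{L^2}^{1/2}\|\nabla u\|_{L^2}^{1/2}$ and absorb it into the left-hand side.
\end{itemize}
This gives, pointwise in time,
\[
\|\nabla u(t)\|_{L^2}\lesssim 1+\|a(t)\|_{L^2}+\|u(t)\|_{L^2}+\|a(t)\|_{H^1}^2\|u(t)\|_{L^2},
\]
with constants depending on $\cE$ and the background. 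Estimating $a\phi$ by $\|a\|_{L^2}\|\phi\|_{L^\infty}$ instead would only yield an $L^2_t$ bound, which is insufficient for continuation.

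There is also a minor imprecision in your $L^2$ bounds. The gauge terms $\nabla\partial^\ell a_\ell$ and $(\partial^\ell a_\ell)\phi$ in $\partial_t(a,u)$ are not controlled by the energy identity. They are controlled by your divergence heat estimate, which gives $\partial^\ell a_\ell\in L^\infty_tL^2_x$ and $\nabla\partial^\ell a_\ell\in L^2_{t,x}$, together with $\|\phi\|_{L^1_tL^\infty_x}$. With these repairs your argument closes.
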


\begin{proof}[Proof of local existence]
	Set 
		\[
			(a, u) 
				:= (A, \phi) - (\underline A, \phi), 
		\]
	then by Duhamel's principle, a solution to the Cauchy problem for \eqref{lambda-AHG} may be recast as a fixed point of the non-linear mapping
		\begin{align*}
			\Phi 
				: \bigl(L^\infty_t H^1_x \cap L^2_t \dot H^2_x\bigr) ([0, T] \times \R^2)
					&\longrightarrow \bigl(L^\infty_t H^1 \cap L^2_t H^2_x\bigr) ([0, T] \times \R^2) \\
				(a, u) 
					&\longmapsto e^{t \Delta} (a^{\mathrm{in}}, u^{\mathrm{in}}) + (\partial_t - \Delta)^{-1} \Bigl( \Delta(\underline A, \underline \phi) + \cN (A, \phi) \Bigr), 
		\end{align*}
	where the ``non-linear''\footnote{obviously, upon expanding in terms of the perturbation, these will contain zero-th order terms and linear terms.} terms are given by 
		\begin{align*}
			\cN(A, \phi)
				&:= \bigl( \Im(\overline \phi \bfD \phi), - 2i A^\ell \bfD_\ell \phi + A^\ell A_\ell \phi + \tfrac\lambda2 (1 - |\phi|^2) \phi  \bigr).
		\end{align*}
	We argue by Picard iteration, aiming to show that, for initial data of size $R := \| (a^{\mathrm{in}}, u^{\mathrm{in}}) \|_{H^1_x}$, there exists a small time $T \ll_R 1$ for which this map is contractive on a ball of radius $2R > 0$. That is, 
		\begin{align}
			\| \Phi (a, u) \|_{L^\infty_t H^1_x \, \cap \, L^2_t \dot H^2_x} 
				&\leq 2R, \label{eq:bounded} \\
			\| \Phi (a, u) - \Phi(b, v) \|_{L^\infty_t H^1_x \, \cap \, L^2_t \dot H^2_x} 
				&\leq \frac12 \| (a, u) - (b, v) \|_{L^\infty_t H^1_x \, \cap \, L^2_t \dot H^2_x}, \label{eq:contract}
		\end{align}
	whenever 
		\begin{equation}\label{eq:ball}
			\| (a, u)  \|_{L^\infty_t H^1_x \, \cap \, L^2_t \dot H^2_x}, \| (b, v)  \|_{L^\infty_t H^1_x \, \cap \, L^2_t \dot H^2_x} \leq 2R. 
		\end{equation}
	By the Banach fixed point lemma, such a fixed point exists, is unique in $L^\infty_t H^1_x \cap L^2_t \dot H^2_x$, and thereby gives rise to the desired solution to \eqref{lambda-AHG}. 

	Towards the energy bound \eqref{bounded}, we apply the triangle inequality and the energy estimate \eqref{energy-estimate},
		\begin{equation}\label{eq:energy-step1}
		\begin{split}
			\| \Phi (a, u) \|_{L^\infty_t H^1_x \, \cap \, L^2_t \dot H^2_x} 
				&\leq \| e^{t \Delta} (a, u) \|_{L^\infty_t H^1_x \, \cap \, L^2_t \dot H^2_x}\\
					&\qquad  + \Bigl\| (\partial_t - \Delta)^{-1} \Bigl( \Delta(\underline A, \underline \phi) + \cN (A, \phi) \Bigr) \Bigr\|_{L^\infty_t H^1_x \, \cap \, L^2_t \dot H^2_x} \\
				&\leq R + C \Bigl( \| \Delta(\underline A, \underline \phi) \|_{L^2_{t, x}} + \| \cN (A, \phi) \|_{L^2_{t, x}} \Bigr).
		\end{split}
		\end{equation}
	for some constant $C > 0$. To handle the second term on the right of \eqref{energy-step1}, we write 
		\begin{align*}
			\| \Delta (\underline A, \underline \phi) \|_{L^2_{t, x}}
				&\lesssim T^\frac12 \Bigl( \| \nabla \underline F_{12} \|_{L^2_x} + \| \underline \bfD^{(2)} \underline \phi \|_{L^2_x} + \| \underline A\|_{L^\infty_x} \| \underline \bfD \underline \phi \|_{L^2_x} + \| \underline A \|_{L^4_x}^2 \| \underline \phi \|_{L^\infty_x} \Bigr) \lesssim T^\frac12,
		\end{align*}
	using the expression $\underline\bfD^j \underline\bfD_j \underline \phi = \Delta \underline \phi - 2i \underline A^j \underline \bfD_j \underline \phi + \underline A^j \underline A_j \underline \phi$ to estimate the Laplacian of the scalar field, the Coulomb gauge to handle the magnetic potential, and the properties of smooth configurations observed in Lemma \ref{lem:Linfty-config}. To handle the third term on the right of \eqref{energy-step1},
		\begin{align*}
			\| \cN (A, \phi) \|_{L^2_{t, x}} 
				&\lesssim \| (A, \phi) \|_{L^2_t L^\infty_x} \Bigl( \| \bfD \phi \|_{L^\infty_t L^2_x} + \| A \|_{L^\infty_t L^4_x}^2 + \| 1 - |\phi|^2\|_{L^\infty_t L^2_x} \Bigr) \\
				&\lesssim_R \Bigl( T^\frac12 + \| (a, u) \|_{L^2_t L^\infty_x} \Bigr) \Bigl( 1 + \| (a, u) \|_{L^\infty_t H^1_x} \Bigr) \\
				&\lesssim \Bigl( T^\frac12 + T^\frac14 \| (a, u) \|_{L^\infty_t L^2_x} \| (a, u) \|_{L^2_t \dot H^2_x} \Bigr) \Bigl( 1 + \| (a, u) \|_{L^\infty_t H^1_x} \Bigr) \lesssim_R T^\frac14
		\end{align*}
	using the triangle inequality, H\"older's inequality, the Lipchitz continuity of various non-linear expressions alotted by Proposition \ref{prop:lipschitz}, and Gagliardo-Nirenberg interpolation to handle the $L^\infty_x$-norm of $(a, u)$. Taking $T \ll 1$ in the previous two calculations and inserting back into \eqref{energy-step1} gives \eqref{bounded}.

	The contractive property \eqref{contract} follows a similar argument; using the energy estimate \eqref{energy-estimate}, suitable interpolations, and Proposition \ref{prop:lipschitz}, 
		\begin{align*}
			\| \Phi(a, u) - \Phi(b, v) \|_{L^\infty_t H^1_x \, \cap L^2_t \dot H^2_x} 
				&\leq  \Bigl\| (\partial_t - \Delta)^{-1} \Bigl(   \cN (\underline A + a, \underline \phi + u)  - \cN (\underline A + b, \underline \phi + v)  \Bigr) \Bigr\|_{L^\infty_t H^1_x \, \cap \, L^2_t \dot H^2_x} \\
				&\lesssim \Bigl\|   \cN (\underline A + a, \underline \phi + u)  - \cN (\underline A + b, \underline \phi + v)   \Bigr\|_{L^2_{t, x}}\\
				&\lesssim_R T^\frac14 \| (a, u) \|_{H^1}.
		\end{align*}
	Taking $T \ll 1$ allows us to conclude \eqref{contract} and thereby the proof. 
\end{proof}

\begin{proof}[Proof of energy identity]
	This is a standard calculation, multiplying the equation by $(A, \phi)$ and integrating-by-parts. The details may be carefully justified by considering smooth initial data $(a^{\mathrm{in}}, u^{\mathrm{in}}) \in H^\infty (\R^2)$; a similar argument to the one above will show persistence of this regularity. One can extend to $H^1$-regularity by density and Proposition \ref{prop:lipschitz}. 
\end{proof}

\begin{proof}[Proof of global existence]
	We claim that the solution may be continued as long as 
		\begin{equation}\label{eq:continuation}
			\int_0^T \| \phi(t) \|_{L^\infty_x}^2 \, dt < \infty. 
		\end{equation}
	This would imply global existence; indeed, using the bound \eqref{inefficientLinfty}, and using the smoothing estimate\footnote{we proved this for the self-dual case, however the estimate is robust and can easily be shown for $\lambda \neq 1$.} \eqref{smoothing} on unit-time scales depending only on the abelian Higgs energy, 
		\begin{align*}
			\int_0^T \| \phi(t) \|_{L^\infty_x}^2 \, dt 
				&\lesssim  \int_0^T 1 + \| 1 - |\phi(t)|^2 \|_{L^2_x}^{\frac13} \| \bfD^{(2)} \phi (t) \|_{L^2_x}^{\frac13} \, dt \lesssim_\cE T. 
		\end{align*}

	The continuation criterion \eqref{continuation} will be a consequence of the trivial continuation criterion given by the local existence theory, that is, (absence of) blow-up for the $H^1$-norm of $(a, u)$, and the energy estimate. Writing the solution to \eqref{lambda-AHG} as a Duhamel integral for the perturbation,  
		\[
			(a, u) (t) 
				=  e^{t \Delta} (a^{\mathrm{in}}, u^{\mathrm{in}}) + (\partial_t - \Delta)^{-1} \Bigl( \Delta(\underline A, \underline \phi) + \cN (A, \phi) \Bigr).
		\]
	then, Proposition \ref{lem:Linfty-config}, Gagliardo-Nirenberg, H\"older's inequality, and energy monotonicity, the perturbation of the scalar field obeys the energy estimate
		\begin{align*}
			\| u \|_{L^\infty_t H^1_x\, \cap \, L^2_t \dot H^2_x}
				&\lesssim \| u^{\mathrm{in}} \|_{H^1_x} + \| \Delta \underline \phi \|_{L^1_t L^2_x \, \cap \, L^2_{t, x}} + \| A \|_{L^1_t L^\infty_x \, \cap \, L^2_t L^\infty_x}\| \bfD \phi \|_{L^\infty_t L^2_x} \\
				&\qquad + \| \phi \|_{L^1_t L^\infty_x \, \cap \, L^2_t L^\infty_x} \Bigl( \| A \|_{L^\infty_t L^4_x}^2 + \| 1 - |\phi|^2 \|_{L^\infty_t L^2_x}\Bigr) \\
				&\lesssim_\cE 1 + T +  \| a \|_{L^\infty_t H^1_x\, \cap \, L^2_t \dot H^2_x} + \| \phi \|_{L^1_t L^\infty_x \, \cap \, L^2_t L^\infty_x} \bigl( 1 + \| a \|_{L^\infty_t H^1_x}^2 \bigr) .
		\end{align*}
	Thus, for $u$ to blow-up in $H^1$-norm, either $a$ blows-up in $L^\infty_t H^1_x \cap L^2_t \dot H^2_x$-norm, or \eqref{continuation} is violated. By the same circle of ideas, the perturbation of the magnetic field obeys the energy estimate, 
		\begin{align*}
			\| a \|_{L^\infty_t H^1_x\, \cap \, L^2_t \dot H^2_x}
				&\lesssim \| a^{\mathrm{in}} \|_{H^1_x} + \| \Delta \underline A\|_{L^1_t L^2_x \, \cap \, L^2_{t, x}} + \| \phi \|_{L^1_t L^\infty_x \, \cap \, L^2_t L^\infty_x} \| \bfD \phi \|_{L^\infty_t L^2_x} \\ 
				&\lesssim_\cE 1 + T + \int_0^T \| \phi \|_{L^\infty_x}^2 \, dt.
		\end{align*}
	We conclude the continuation criterion \eqref{continuation} and thereby the proof. 
\end{proof}

\bibliographystyle{alpha}
\bibliography{external/biblio.bib}
\end{document}